\newtheorem{theorem}{Theorem}[section]
\newtheorem{corollary}[theorem]{Corollary}
\newtheorem{lemma}[theorem]{Lemma}
\newtheorem{proposition}[theorem]{Proposition}
\newtheorem{claim}[theorem]{Claim}
\theoremstyle{definition}
\newtheorem{remark}[theorem]{Remark}
\numberwithin{equation}{section}
\newcommand{\f}{\mathcal{F}}
\newcommand{\g}{\mathcal{G}}
\newcommand{\K}{\mathcal{K}}
\newcommand{\aut}{{\rm Aut}{(\lel)}}
\newcommand{\lel}{\mathbb{L}}
\newcommand{\ian}{\mathbb{I}}
\newcommand{\fan}{\mathbb{F}}
\newcommand{\x}{\mathbb{X}}
\begin{document}
\baselineskip=17pt

\title{Lelek fan from a projective Fra\"{i}ss\'{e} limit}

\author[D. Barto\v{s}ov\'{a}]{Dana Barto\v{s}ov\'{a}}
\address{Institute de Matematica e Estat\'istica, Universidade de S\~ao Paulo, Brazil}
\email{dana@ime.usp.br}

\author[A. Kwiatkowska]{Aleksandra Kwiatkowska}
\address{Department of Mathematics, University of California, Los Angeles,~USA}
\email{akwiatk2@math.ucla.edu}

\date{}

\begin{abstract}
We show that a natural quotient of the projective Fra\"{i}ss\'{e} limit of a family  that consists of finite rooted trees is the Lelek fan.
Using this construction, we study properties of the Lelek fan and of its homeomorphism group. We show that the Lelek fan is projectively universal and projectively ultrahomogeneous in the class of smooth fans.
We further show that the homeomorphism group of the Lelek fan is totally disconnected, generated by every neighbourhood of the identity, has a dense conjugacy class, and is simple.
\end{abstract}

\subjclass[2010]{03E15, 37B05, 54F15, 03C98}

\keywords{Lelek fan, Fra\"{i}ss\'{e} limits, homeomorphism groups}

\maketitle

\section{Introduction}

\subsection{Lelek fan}\label{notation}
 A {\em continuum} is a compact and connected metric space.
Let $C$ denote the Cantor set. The {\em Cantor fan} $F$ is the cone over the Cantor set, that is
$C\times [0,1]/\!\!\sim$, where $(a,b)\sim (c,d)$ if and only if $a=c$ and $b=d$ or $b=d=0$. Recall that an {\em arc} is a homeomorphic image of the closed unit interval $[0,1].$ If $X$ is a space and $h:[0,1]\to X$ is a homeomorphism onto its image, we call $h(0)=a$ and $h(1)=b$ the endpoints of the arc given by $h$ and denote this arc as $ab$.
An {\em endpoint} of a continuum $X$ is a point  $e$ such that for every arc $ab$ in $X$, if $e\in ab$, then $e=a$ or $e=b$.
Finally, a {\em  Lelek fan} $L$ is a  non-degenerate subcontinuum of the Cantor fan with a dense set of endpoints.

In the literature,  a Lelek fan is often defined as a smooth fan with a dense set of endpoints. 
However, smooth fans  are exactly  fans that can be embedded into the Cantor fan
 (see \cite{CC}, Proposition 4). We give the definition of a smooth fan in Subsection \ref{pu}.
 

 A Lelek fan was constructed by Lelek  \cite{L}.
Several characterizations of a Lelek fan  were collected in \cite{CCM}, Theorem 12.14.
A remarkable property of a Lelek fan is its uniqueness, which was proved independently by Bula-Oversteegen \cite{BO} and 
by W. Charatonik \cite{C}: any two non-degenerate subcontinua of the Cantor fan with a dense set of endpoints are homeomorphic.
We can therefore speak about ``the'' Lelek fan.

A very interesting and well-studied by many people is the space $E$ of endpoints of the Lelek fan $L$. 
This space is a dense $G_\delta$ set in $L$, therefore it
 is separable and completely metrizable. 
It is homeomorphic to the complete Erd\H{o}s space, to
the set of endpoints of the Julia set of the exponential map, to the set of endpoints of the separable universal $\mathbb{R}$-tree;
see Kawamura-Oversteegen-Tymchatyn \cite{KOT} for more details. Since the complete Erd\H{o}s space is 1-dimensional, so is $E.$

Dijkstra-Zhang \cite{DZ}  showed  that the space of Lelek fans, 
endowed with the Vietoris topology, in the Cantor fan is homeomorphic to the separable Hilbert space.

Here we introduce some  notation that we will need later on. By $v$ we denote the {\em top} $v=(0,0)/\!\!\sim$ of the Cantor fan. 
For a point $x\in F$, let $[v,x]$ denote the closed line 
with endpoints $v$ and $x$. If $x$ is in the line segment $[v,y],$ we denote by $[x,y]$ the line segment $([v,y]\setminus [v,x])\cup\{x\}.$ 
Points in $F$ will be denoted by $(c,y)$, where $c\in C$ and $y\in [0,1]$.
Let $\pi_1\colon F\setminus\{v\}\to C$, $\pi_1(c,x)=c$, 
and $\pi_2\colon F\to [0,1]$, $\pi_2(c,x)=x$,
 be  projections. 
Let $E$ be the set of endpoints of the Lelek fan $L$, and let
$H(L)$ be the group of all homeomorphisms of the Lelek fan.

\subsection{Projective Fra\"{i}ss\'{e} limits}\label{frase}

 Given a language $\mathcal{L}$ that consists of relation symbols $r_i$,  with arity $m_i$,  $i\in I$, and function symbols $f_j$, 
  with arity $n_j$, $j\in J$,
a \emph{topological $\mathcal{L}$-structure} is a compact Hausdorff zero-dimensional second-countable space $A$ equipped with
closed relations $r_i^A\subseteq A^{m_i}$ and continuous functions $f_j^A\colon A^{n_j}\to A$, $i\in I, j\in J$.
A~continuous surjection $\phi\colon B\to A$ is an
 \emph{epimorphism} if it preserves the structure, that is, for a function symbol $f$ in $\mathcal{L}$ of arity $n$ and $x_1,\ldots,x_n\in B$ we require:
\[
 f^A(\phi(x_1),\ldots,\phi(x_n))=\phi(f^B(x_1,\ldots,x_n));
\]
and for a relation symbol $r$ in $\mathcal{L}$ of arity $m$ and $x_1,\ldots,x_m\in A$  we require:
\begin{equation*}
\begin{split}
&  r^A(x_1,\ldots,x_m) \\ 
&\iff \exists y_1,\ldots,y_m\in B\left(\phi(y_1)=x_1,\ldots,\phi(y_m)=x_m, \mbox{ and } r^B(y_1,\ldots,y_m)\right).
\end{split}
\end{equation*}
By an \emph{isomorphism}  we mean a bijective epimorphism.

For the rest of this section fix a language $\mathcal{L}$.
Let $\mathcal{G}$ be a family of finite topological $\mathcal{L}$-structures. We say that $\mathcal{G}$ is a \emph{ projective Fra\"{i}ss\'{e} family} if it is countable and the following two conditions hold:

(JPP) (the joint projection property) for any $A,B\in\mathcal{G}$ there are $C\in \mathcal{G}$ and epimorphisms from $C$ onto $A$ and from $C$ onto $B$;

(AP) (the amalgamation property) for $A,B_1,B_2\in\mathcal{G}$ and any epimorphisms $\phi_1\colon B_1\to A$ and $\phi_2\colon B_2\to A$, there exist $C\in\mathcal{G}$,
 $\phi_3\colon C\to B_1$, and $\phi_4\colon C\to B_2$ such that $\phi_1\circ \phi_3=\phi_2\circ \phi_4$.

A topological $\mathcal{L}$-structure $\mathbb{G}$ is a \emph{ projective Fra\"{i}ss\'{e} limit } of $\mathcal{G}$ if the following three conditions hold:

(L1) (the projective universality) for any $A\in\mathcal{G}$ there is an epimorphism from $\mathbb{G}$ onto $A$;

(L2) for any finite discrete topological space $X$ and any continuous function
 $f\colon \mathbb{G} \to X$ there are $A\in\mathcal{G}$, an epimorphism $\phi\colon \mathbb{G}\to A$, and a function
$f_0\colon A\to X$ such that $f = f_0\circ \phi$;

(L3) (the projective ultrahomogeneity) for any $A\in \mathcal{G}$ and any epimorphisms $\phi_1\colon \mathbb{G}\to A$ 
and $\phi_2\colon \mathbb{G}\to A$
there exists an isomorphism $\psi\colon \mathbb{G}\to \mathbb{G}$ such that $\phi_2=\phi_1\circ \psi$.

We will often use Remark \ref{coveri}. It follows immediately from (L2).
\begin{remark}\label{coveri}
Let $\mathbb{G}$ be the projective Fra\"{i}ss\'{e} limit of $\mathcal{G}$. Then every finite open cover 
can be {\em refined by an epimorphism, i.e. for every open cover
 $\mathcal{U}$
of $\mathbb{G}$  
there is an epimorphism  $\phi\colon\mathbb{G}\to A$,  for some  
  $A\in\mathcal{G}$, such that for every $a\in A$, $\phi^{-1}(a)$ is contained in some open set in $\mathcal{U}$.}
\end{remark}

\begin{remark}
In the projective Fra\"{i}ss\'{e} theory, a projective Fra\"{i}ss\'{e} family has properties dual to the joint embedding property and to the amalgamation property
from the (injective)  Fra\"{i}ss\'{e} theory.
We do not have a condition that corresponds to the hereditary property. Nevertheless, we can think of (L2) as a dualization of a
``cofinal hereditary property'': if   $\mathbb{K}$ is the Fra\"{i}ss\'{e} limit of a Fra\"{i}ss\'{e} family $\mathcal{K}$, then for any finite 
 $X\subseteq \mathbb{K}$ , there is $A\in\mathcal{K}$ with  $X\subseteq A\subseteq \mathbb{K}$.
\end{remark}

\begin{theorem}[Irwin-Solecki, \cite{IS}]\label{is}
 Let $\mathcal{G}$ be a projective Fra\"{i}ss\'{e} family of finite topological $\mathcal{L}$-structures. Then:
\begin{enumerate}
 \item there exists a projective Fra\"{i}ss\'{e} limit of $\mathcal{G}$;
\item any two projective Fra\"{i}ss\'{e} limits of $\mathcal{G}$ are isomorphic.
\end{enumerate}
\end{theorem}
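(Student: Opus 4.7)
The plan is to prove existence (1) by constructing $\mathbb{G}$ as an inverse limit of a carefully chosen cofinal sequence of epimorphisms in $\mathcal{G}$, and to prove uniqueness (2) by a back-and-forth argument that exploits (L1)--(L3).

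For existence, since $\mathcal{G}$ is countable, the set of pairs $(A, \phi)$, where $A \in \mathcal{G}$ and $\phi$ is an epimorphism onto some fixed finite structure, can be enumerated; using a standard diagonal bookkeeping I would build a chain
\[
A_0 \xleftarrow{p_0} A_1 \xleftarrow{p_1} A_2 \xleftarrow{p_2} \cdots
\]
of epimorphisms between members of $\mathcal{G}$, starting from an arbitrary $A_0$, using (JPP) to bring every member of $\mathcal{G}$ into the chain cofinally, and using (AP) to ensure that every epimorphism $\psi\colon B \to A_n$ ($B\in\mathcal{G}$) that appears on our list eventually factors as $\psi\circ q = p_n\circ p_{n+1}\circ\cdots\circ p_{m-1}$ for some $q\colon A_m\to B$. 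Then I set $\mathbb{G}=\varprojlim(A_n,p_n)$; this is automatically compact, Hausdorff, zero-dimensional, and second-countable because each $A_n$ is finite discrete. Structure on $\mathbb{G}$ is defined coordinate-wise for function symbols and by $r^{\mathbb{G}}(x_1,\dots,x_m) \iff r^{A_n}(\pi_n(x_1),\dots,\pi_n(x_m))$ for all $n$ for relation symbols, which makes each projection $\pi_n\colon\mathbb{G}\to A_n$ an epimorphism (the nontrivial lift condition for relations uses the genericity of the chain together with a compactness argument).

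With $\mathbb{G}$ in hand, (L1) follows because every $A\in\mathcal{G}$ is, by (JPP) and the construction, a quotient of some $A_n$, giving an epimorphism $\mathbb{G}\to A_n \to A$; (L2) follows because any continuous $f\colon \mathbb{G}\to X$ into a finite discrete space factors through some $A_n$ by compactness of $\mathbb{G}$ and the basis of preimages $\pi_n^{-1}(a)$; and (L3) is proved by a back-and-forth: given epimorphisms $\phi_1,\phi_2\colon \mathbb{G}\to A$, one inductively constructs epimorphisms $\alpha_n,\beta_n\colon\mathbb{G}\to A_n$, matching at each step via (AP) applied to the genericity of the chain and using (L2) to realize each $\alpha_n,\beta_n$ as composites through the $A_m$'s, so that the induced coordinate map $\psi\colon\mathbb{G}\to\mathbb{G}$ is an isomorphism satisfying $\phi_2=\phi_1\circ\psi$. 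For uniqueness (2) of two limits $\mathbb{G}_1,\mathbb{G}_2$, I run the same back-and-forth between them: use (L1) to pick $\phi_1^{(0)}\colon\mathbb{G}_1\to A_0$, use (L1)+(AP) on the $\mathbb{G}_2$-side to find $\phi_2^{(0)}\colon\mathbb{G}_2\to A_0$, then alternate (L3) on each side together with (L2) (to realize each projection as one onto a member of $\mathcal{G}$) to produce compatible towers of epimorphisms $\mathbb{G}_1\to A_n$ and $\mathbb{G}_2\to A_n$; the maps they induce between $\mathbb{G}_1$ and $\mathbb{G}_2$ are mutually inverse isomorphisms.

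The main obstacle is the diagonal bookkeeping needed to guarantee genericity of the chain $(A_n,p_n)$: one must schedule enumerations so that for every $n$, every $A\in\mathcal{G}$, and every epimorphism $\psi\colon B\to A_n$, the amalgamation step that absorbs $\psi$ is performed at some finite later stage without starving the other tasks. A secondary subtlety is checking that the closed relation $r^{\mathbb{G}}$ defined above satisfies the lifting clause for epimorphisms $\pi_n$; here one uses that any tuple in $r^{A_n}$ can be pulled back through finitely many stages by (AP) and then extended to a compatible thread in $\mathbb{G}$ by compactness. Both steps are standard in the Irwin--Solecki framework, but they are where all the work sits.
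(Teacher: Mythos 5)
The paper does not prove this theorem---it is imported from Irwin--Solecki \cite{IS}---and your sketch follows exactly the construction given there: a generic (Fra\"{i}ss\'{e}) inverse sequence built by dovetailing (JPP) and (AP), the inverse limit with coordinate-wise functions and the relation $r^{\mathbb{G}}=\bigcap_n(\pi_n\times\cdots\times\pi_n)^{-1}(r^{A_n})$, and a back-and-forth for (L3) and for uniqueness; this is also the form the paper itself later exploits when it writes $\mathbb{L}$ as the inverse limit of a Fra\"{i}ss\'{e} sequence $(T_n,f_n)$. Your outline is correct, with the genuinely laborious points (the bookkeeping and the compactness argument for the relation-lifting clause of the $\pi_n$) correctly identified.
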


We will frequently use the following property of the projective Fra\"{i}ss\'{e} limit,  called {\em{ the extension property}}.

\begin{proposition}\label{fraisse}

If $\mathbb{G}$ is the projective Fra\"{i}ss\'{e} limit of $\mathcal{G}$ the following condition  holds:
Given $A,B\in\mathcal{G}$ and epimorphisms $\phi_1\colon B\to A$ and $\phi_2\colon \mathbb{G}\to A$, there is  an epimorphism 
$\psi\colon \mathbb{G}\to B$ such that $\phi_2=\phi_1\circ \psi$.
\end{proposition}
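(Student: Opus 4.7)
The plan is to derive the extension property from projective universality (L1) and projective ultrahomogeneity (L3), using the elementary fact that the composition of epimorphisms is an epimorphism. No amalgamation is needed directly at the level of $\mathbb{G}$; the work is already packaged inside (L3).

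First I would start by observing that if $\alpha \colon Z \to Y$ and $\beta \colon Y \to X$ are epimorphisms of topological $\mathcal{L}$-structures, then $\beta \circ \alpha \colon Z \to X$ is also an epimorphism. The preservation of function symbols is immediate from the two commutative diagrams. For a relation symbol $r$ of arity $m$, if $r^X(x_1,\ldots,x_m)$ holds, then by $\beta$ being an epimorphism there exist $y_1,\ldots,y_m \in Y$ with $\beta(y_i)=x_i$ and $r^Y(y_1,\ldots,y_m)$; applying that $\alpha$ is an epimorphism, we lift these further to $z_1,\ldots,z_m \in Z$ with $\alpha(z_i)=y_i$ and $r^Z(z_1,\ldots,z_m)$, whence $\beta \alpha (z_i) = x_i$. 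The reverse implication is automatic since epimorphisms preserve relations in the ``easy'' direction. An analogous remark handles compositions of an isomorphism with an epimorphism.

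Next, apply (L1) to the structure $B \in \mathcal{G}$ to obtain an epimorphism $\phi_0 \colon \mathbb{G} \to B$. By the observation above, $\phi_1 \circ \phi_0 \colon \mathbb{G} \to A$ is an epimorphism. Now (L3), applied to the two epimorphisms $\phi_1 \circ \phi_0$ and $\phi_2$ from $\mathbb{G}$ onto $A$, produces an isomorphism $\psi_0 \colon \mathbb{G} \to \mathbb{G}$ with
\[
\phi_2 \;=\; (\phi_1 \circ \phi_0) \circ \psi_0 \;=\; \phi_1 \circ (\phi_0 \circ \psi_0).
\]
Setting $\psi := \phi_0 \circ \psi_0$ gives an epimorphism $\mathbb{G} \to B$ with $\phi_1 \circ \psi = \phi_2$, as required.

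There is no real obstacle: once one is comfortable that epimorphisms compose (this is the only mildly nontrivial sanity check, and it is entirely mechanical), the statement is essentially a two-line consequence of (L1) and (L3). The ``hard'' content of the extension property is already hidden inside (L3), which itself is part of the definition of the projective Fra\"{i}ss\'{e} limit.
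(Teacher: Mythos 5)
Your argument is correct: epimorphisms compose, (L1) supplies some epimorphism $\phi_0\colon\mathbb{G}\to B$, and applying (L3) to the two epimorphisms $\phi_1\circ\phi_0$ and $\phi_2$ onto $A$ yields an isomorphism $\psi_0$ with $\phi_2=\phi_1\circ(\phi_0\circ\psi_0)$, so $\psi=\phi_0\circ\psi_0$ works. The paper states this proposition without proof, treating it as the standard extension property from the Irwin--Solecki framework, and your derivation is exactly the standard one.
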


\subsection{Summary of results}
In Section 2, 
we construct the Lelek fan $L$ as a natural quotient of the
projective Fra\"{i}ss\'{e} limit of a   family of finite ordered trees. In fact, we show that we can restrict our attention to a  subclass $\f$ of simple trees called 
fans. We then use this construction
to show projective universality and projective ultrahomogeneity of the Lelek fan in the family of all smooth fans (Theorem \ref{uph}).
In particular, we obtain that every smooth fan is a continuous image of the Lelek fan.

In Section 3, we prove that  the homeomorphism group of the Lelek fan, $H(L)$, 
satisfies the following properties.
\begin{enumerate}
\item The group $H(L)$ is totally disconnected (Proposition \ref{discon}).
\item The group $H(L)$ is generated by every neighbourhood of the identity  (Corollary \ref{epsgen_c}).
\item The group $H(L)$ has a dense conjugacy class (Theorem \ref{xyz}).
\item The group $H(L)$ is simple (Theorem \ref{simple}).

\end{enumerate}
To prove properties (2) and (3), we use our projective Fra\"{i}ss\'{e} limit construction. 
For a detailed discussion of motivation, connections to other known results, etc., of each of these four properties, we refer to
 Section 3.

Lewis-Zhou (\cite{LZ}, Question 5) asked whether every homeomorphism group of a continuum,
which is generated by every neighbourhood of the identity, has to be connected. As $H(L)$ satisfies properties (1) and (2) above,
the answer to this question is negative.

We were recently informed by  Megrelishvili that results in this
paper together with results due to Ben Yaacov and Tsankov in
\cite{BYT} (Corollary 4.10) give a positive answer to a question posed
by Glasner and
Megrelishvili as Question 6.14 in \cite{Me} and Question 10.5(1) in \cite{GM}:
 Is it true that there exists a non-trivial Polish group $G$ which is
 reflexively trivial but does not contain
 $H_+[0,1]$, the group of increasing homeomorphisms of [0,1]?
Indeed, $H(L)$ provides an example of such a group.
As properties (1) and (2) above hold for $H(L)$ and since $\aut$, where $\lel$ is the 
projective Fra\"{i}ss\'{e} limit of the family of finite rooted reflexive fans discussed below,
is an oligomorphic group (see \cite{BKn}), Corollary 4.10 from \cite{BYT} implies
that $H(L)$ is reflexively trivial. Since $H(L)$ is totally
disconnected, it does not contain $H_+[0,1]$.

\section{Lelek fan as a quotient of a projective Fra\"{i}ss\'{e} limit}

\subsection{Construction of the Lelek fan}
Let $T$ be a {\em finite tree}, that is, an undirected simple graph which is connected and has no cycles.
We will only consider rooted trees, i.e. trees with a distinguished element $r_T\in T$. On a rooted tree $T$ there is a natural partial order
$\leq_T$: for $t,s\in T$ we let $s\leq_T t$ if and only if $s$ belongs to the path connecting $t$ and the root.
We say that $t$ is a {\em successor} of $s$ if $s\leq_T t$, $t\neq s$.
It is an {\em immediate successor} if additionally there is no $p\in T$, $p\neq s,t$,  with $s\leq_T p\leq_T t$.
A {\em chain} is a rooted tree $T$ on which the order $\leq_T$ is linear.
A {\em branch } of a rooted tree $T$ is a maximal chain in  $(T,\leq_T)$. 
If $b$ is a branch in $T$, we will sometimes write $b=(b(0),\ldots,b(n))$, where $b(0)$ is the root of $T$, and
$b(i)$ is an immediate successor of $b(i-1)$, for every $i=1, 2, \ldots, n$. 
By $B(T)$ we denote the set of all branches of $T$.

Let $R$ be a binary relation symbol. Consider the language $\mathcal{L}=\{R\}$.
For $s,t\in T$ we let $R^T(s,t)$ if and only if $s=t$ or 
 $t$ is an immediate successor of $s$.
 Let $\mathcal{T}$ be the family of all finite rooted trees, viewed as topological $\mathcal{L}$-structures, equipped with the discrete topology.

A function $\phi\colon (S,R^S)\to (T,R^T)$ is a {\em homomorphism} if for every $s_1,s_2\in S$,  whenever
$R^S(s_1,s_2)$ then $R^T(\phi(s_1),\phi(s_2))$.

\begin{remark}\label{epi}
Notice that $\phi\colon (S,R^S)\to (T,R^T)$ is an epimorphism if and only if it is a surjective homomorphism.
\end{remark}

Let  $\f$ be the family of {\em finite rooted reflexive fans}, that is, the family
that consists of rooted
 trees $T\in\mathcal{T}$ such that for every $s,t\in T$ which are incomparable in $\leq_T$, if 
$p\neq s,t$ is such that $R^T(p,s)$ and $R^T(p,t)$, then $p$ is the root of $T$, and moreover all branches of $T$ have the same length.

\begin{remark}\label{fan}
The family $\f$ is {\em coinitial} in $\mathcal{T}$, that is,
for every  $T\in \mathcal{T}$ there are  $S\in\f$ and an epimorphism $\phi\colon S\to T$.
\end{remark}

\begin{proposition}\label{ap}
The family $\mathcal{T}$ is a projective Fra\"{i}ss\'{e} family.
 \end{proposition}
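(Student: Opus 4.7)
The family $\mathcal{T}$ is countable up to isomorphism, so it remains to verify the joint projection (JPP) and amalgamation (AP) properties.

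\textbf{JPP from AP.} The singleton rooted tree $\{\ast\}$ (with $R(\ast,\ast)$ the only relation) lies in $\mathcal{T}$, and the unique (constant) map from any $T\in\mathcal{T}$ onto $\{\ast\}$ is a surjective homomorphism, hence an epimorphism by Remark~\ref{epi}. JPP is therefore immediate once AP is established, by amalgamating any $A,B\in\mathcal{T}$ over $\{\ast\}$.

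\textbf{Construction for AP.} Given epimorphisms $\phi_i\colon B_i\to A$ for $i=1,2$, I would define $C$ as the tree of ``interleaving sequences'' inside $B_1\times B_2$. A vertex of $C$ is a finite nonempty sequence
\[
\sigma=\bigl((b_1^0,b_2^0),(b_1^1,b_2^1),\ldots,(b_1^k,b_2^k)\bigr)
\]
with $(b_1^0,b_2^0)=(r_{B_1},r_{B_2})$, satisfying $\phi_1(b_1^i)=\phi_2(b_2^i)$ for every $i$, and with each transition of one of three types: (i) $b_1^{i+1}$ is an immediate $B_1$-successor of $b_1^i$ collapsed by $\phi_1$ and $b_2^{i+1}=b_2^i$; (ii) the symmetric move in $B_2$; (iii) $b_j^{i+1}$ is an immediate $B_j$-successor of $b_j^i$ for $j=1,2$ with $\phi_1(b_1^{i+1})=\phi_2(b_2^{i+1})$. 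Order $C$ by prefix, take the singleton sequence as root, and set $\psi_j(\sigma):=b_j^k$. Then $\phi_1\psi_1=\phi_2\psi_2$ is immediate from the defining constraint, and each $\psi_j$ is a homomorphism because every $C$-edge either advances the $j$-th coordinate along an $R^{B_j}$-edge or leaves it fixed (reflexivity of $R^{B_j}$).

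\textbf{Finiteness and surjectivity.} The quantity $\mathrm{depth}(b_1^k)+\mathrm{depth}(b_2^k)$ strictly increases along any sequence in $C$, so $C$ is a finite rooted tree in $\mathcal{T}$. The only step requiring real work — the main obstacle — is surjectivity of $\psi_j$. First I would observe that any epimorphism in $\mathcal{T}$ sends root to root: if $y\in\phi^{-1}(r_A)$ has a parent $p$, then $R^A(\phi(p),r_A)$ forces $\phi(p)=r_A$, and iterating up reaches $r_B$. Consequently the $\phi_i$-image of a root-to-vertex path in $B_i$ is a monotone walk in $A$ whose reduction (collapsing consecutive repeats) is the unique root-to-image path. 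Given $b_1\in B_1$, choose $b_2\in\phi_2^{-1}(\phi_1(b_1))$, nonempty by surjectivity of $\phi_2$; the root-to-$b_1$ and root-to-$b_2$ paths then have identical reduced images in $A$. A greedy interleaver now traces both simultaneously: at each stage, if the next $B_1$-step is $\phi_1$-collapsed do (i); else if the next $B_2$-step is $\phi_2$-collapsed do (ii); otherwise both next steps must advance to a common successor in $A$ (forced by the matching reduced images), so do (iii). The process terminates at $(b_1,b_2)$ and produces $\sigma\in C$ with $\psi_1(\sigma)=b_1$; by symmetry $\psi_2$ is surjective too, and both maps are epimorphisms by Remark~\ref{epi}, completing the proof.
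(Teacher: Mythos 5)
Your proof is correct, and it fills in precisely the detail the paper leaves implicit. The core amalgamation idea is the same in both arguments: walk simultaneously along a path in $B_1$ and a path in $B_2$ lying over the same reduced path in $A$, padding with steps that $\phi_1$ or $\phi_2$ collapses. But the packaging differs. The paper first reduces to the coinitial subfamily $\f$ of fans, and for each branch $b$ of $B_1$ (and of $B_2$) simply \emph{asserts} the existence of a chain $d_b$ with $R$-preserving maps interleaving $b$ with a compatible branch of the other tree; the amalgam is the wedge of these chains at a common root, which has the advantage of landing back in $\f$ (needed later to conclude that $\f$ itself is a projective Fra\"{i}ss\'{e} family). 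You instead build one global amalgam as a prefix tree of interleaving sequences inside $B_1\times B_2$, and your greedy interleaver --- together with the observations that epimorphisms send root to root and that images of root-to-vertex paths are monotone walks reducing to the unique path in $A$ --- is exactly the missing existence proof for the paper's chains $d_b$; the only points worth watching are that once one path is exhausted all remaining steps of the other are collapsed (which your height-counting invariant gives), and that your $C$ is a general tree rather than a fan, which is harmless for $\mathcal{T}$ but would require the extra wedge-of-chains step to recover the statement for $\f$. Your derivation of JPP from AP over the one-point tree is also a clean alternative to the paper's direct wedge construction.
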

 
 \begin{proof}
JPP: Take  $S_1, S_2\in \mathcal{T}$. Then the tree $T$  equal to the disjoint union of $S_1$ and $S_2$ with their roots identified, together with the natural projections from $T$ onto $S_1$ and from $T$ onto $S_2$ witness the JPP. 
 
AP: Take  $P,Q,S\in\mathcal{T}$ together with epimorphisms $\phi_1\colon Q\to P$ and $\phi_2\colon S\to P$. Without loss of generality, as $\f$ is coinitial in $\mathcal{T}$, $Q$ and $S$ are in 
 $\f$.

Let $b$ be a branch   in $Q,$ and let $a=\phi_1(b)$. Note that $a$ is an initial segment of a branch of $P$. Consider any  branch $c$ in $S$ such that $a\subseteq\phi_2(c)$.
Take a chain $d_b$ and $R$-preserving maps $\psi_1$ and  $\psi_2$ defined on $d_b$ (we do not require them to be surjective) such that $\psi_1(d_b)=b$,
$\psi_2(d_b)\subseteq c$,
 and for every $t\in d_b$, 
$\phi_1\circ \psi_1(t) =\phi_2\circ \psi_2(t) $.

We get $d_b$ for every branch $b$ in $Q$ and we  get $d_b$ for every branch $b$ in $S$. 
Without loss of generality, all chains $d_b$ are of the same length.
Let $T\in\f$ be 
the disjoint union of chains $d_b$ with their roots identified for $b$ a branch in $Q$ or in $S$.
Functions $\psi_1$ and $\psi_2$ are well defined on $T$, $\psi_1$ is onto $Q$, $\psi_2$ is onto $S$, and  
$\phi_1\circ \psi_1 =\phi_2\circ \psi_2$.

\end{proof} 

By Theorem \ref{is}, there exists a unique Fra\"{i}ss\'{e} limit of $\mathcal{T}$, which we denote by $\lel=(\lel, R^{\lel})$.

\noindent The following remark justifies that we can work only  with the family $\f$.

\begin{remark}
From Remark \ref{fan} and Proposition \ref{ap}, it  follows that $\f$ is a projective Fra\"{i}ss\'{e} family and by Theorem \ref{is}, the projective Fra\"{i}ss\'{e} limit of $\f$ is isomorphic to the one of $\mathcal{T}$.
 \end{remark}
 

For a topological $\mathcal{L}$-structure $\x,$ we define $R^{\x}_S$ to be the {\em symmetrization} of $R^{\x},$
that is,
$ R_S^{\x}(s,t)$ if and only if $ R^{\x}(s,t)$ or $  R^{\x}(t,s)$, for every  $s,t\in\x$.

\begin{theorem}
The relation $ R_S^{\lel}$ is an equivalence relation which has only one and two element equivalence classes.
\end{theorem}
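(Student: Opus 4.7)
The plan is to reduce the theorem to a single impossibility: no three pairwise distinct points $x,y,z\in\lel$ can satisfy both $R_S^{\lel}(x,y)$ and $R_S^{\lel}(y,z)$. Reflexivity of $R_S^{\lel}$ follows from reflexivity of each $R^T$ via the epimorphism-lifting condition, and symmetry is immediate from the definition of $R_S$. The impossibility claim yields both the size-$\leq 2$ bound on equivalence classes and transitivity: if $R_S^{\lel}(x,y)$ and $R_S^{\lel}(y,z)$ both hold then two of $x,y,z$ must coincide, and in each case $R_S^{\lel}(x,z)$ then follows from the hypotheses together with reflexivity.

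To rule out such a triple $x,y,z$, I would first produce an epimorphism separating the three points. Using zero-dimensionality of $\lel$, pick pairwise disjoint clopen neighbourhoods $U, V, W$ of $x, y, z$, extend them to a finite cover of $\lel$ in which each of $x, y, z$ belongs to exactly one member, and refine by an epimorphism as in Remark \ref{coveri}; this yields $\phi\colon\lel\to A$ with $A\in\mathcal{T}$ and $\phi(x),\phi(y),\phi(z)$ pairwise distinct. Since epimorphisms preserve $R$, both $R_S^A(\phi(x),\phi(y))$ and $R_S^A(\phi(y),\phi(z))$ hold in $A$, forcing $\phi(x),\phi(z)$ to lie in the $R_S^A$-neighbourhood of $\phi(y)$. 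In the finite rooted tree $A$ this leaves three sub-cases: both $\phi(x),\phi(z)$ are strict predecessors of $\phi(y)$, which is impossible since a vertex has a unique parent; exactly one of $\phi(x),\phi(z)$ is the parent of $\phi(y)$ and the other a child (the \emph{chain} case); or both are children of $\phi(y)$ (the \emph{sibling} case).

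I would dispatch the two remaining sub-cases uniformly via the extension property (Proposition \ref{fraisse}). In each case I build $B\in\mathcal{T}$ by \emph{splitting} the vertex $\phi(y)$: in the chain case replace $\phi(y)$ by a two-vertex chain $y_1<y_2$, so that $\phi(x)$ (the parent) sits above $y_1$ and $\phi(z)$ (the child) below $y_2$; in the sibling case replace $\phi(y)$ by a tripod with a new vertex $y_0$ in the position of $\phi(y)$ having two new children $y_1,y_2$, placing $\phi(x)$ below $y_1$ and $\phi(z)$ below $y_2$. The collapse $\psi\colon B\to A$ identifying all new vertices with $\phi(y)$ and acting as the identity elsewhere is an epimorphism, and Proposition \ref{fraisse} lifts $\phi$ to some $\phi'\colon\lel\to B$ with $\phi=\psi\circ\phi'$. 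Since $\psi$ is injective off the split, $\phi'(x)=\phi(x)$ and $\phi'(z)=\phi(z)$; the relation $R_S^B(\phi(x),\phi'(y))$ then forces $\phi'(y)$ to be the unique splittee adjacent to $\phi(x)$, while $R_S^B(\phi'(y),\phi(z))$ forces it to be a different splittee, delivering the desired contradiction.

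The main obstacle is arranging the split so that $\phi(x)$ and $\phi(z)$ each have a unique splittee-neighbour in $B$ and those two neighbours are distinct; the chain and tripod constructions are tailored precisely for this, while verifying that $\psi\colon B\to A$ is an epimorphism (in particular that every $R^A$-edge lifts to an $R^B$-edge) is a routine preimage check.
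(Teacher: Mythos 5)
Your proposal is correct and follows essentially the same route as the paper: separate the three points by an epimorphism onto a finite tree, split the image of the middle point (the paper uses a single gadget, inserting a doubled vertex $\bar{p}'$ between $p'$ and $r'$, which handles your chain and sibling sub-cases uniformly, where you use two tailored splits), and then use the extension property to get a contradiction about where the middle point must land. The only step stated too quickly is reflexivity: the epimorphism-lifting condition by itself only produces \emph{some} $R^{\lel}$-related pair inside each fiber over $\phi(x)$, so to conclude $R^{\lel}(x,x)$ you also need that $R^{\lel}$ is closed in $\lel\times\lel$, applied along epimorphisms refining arbitrarily small clopen neighbourhoods of $x$, exactly as the paper does.
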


\begin{proof}
To show that $ R_S^{\lel}$ is reflexive, take $x\in\lel$. From  (L2) in the definition of the projective Fra\"{i}ss\'{e} limit it follows that
for every clopen $U\subseteq \lel$ such that $x\in U$ there is $T\in \f$ and an epimorphism $\phi\colon \lel\to T$ refining the partition
$\{U, \lel\setminus U\}.$ By the definition of an epimorphism, there are $x_U,y_U\in U$ such that $R^{\lel}(x_U, y_U)$. Since $R^{\lel}$  is closed in $\lel\times \lel$,
it follows that $R^\lel(x,x)$, and therefore $R_S^{\lel}(x,x)$.

Clearly,  $ R_S^{\lel}$ is symmetric.

To finish the proof of the theorem, it suffices to show that
for every $p,q,r$, pairwise different, 
 we cannot have both  
$ R_S^{\lel}(p,q)$ and $ R_S^{\lel}(p,r)$. 
Suppose the opposite.
Since each member of $\f$ is a tree, it cannot happen that  $R^{\lel}(q,p)$ and $R^{\lel}(r,p)$. Therefore either we have
$R^{\lel}(p,q)$ and $R^{\lel}(p,r)$  or we have $R^{\lel}(q,p)$ and $R^{\lel}(p,r)$.
Consider a clopen partition $P$ of $\lel$ such that $p,q,r$ are in different clopens of $P$.
Using (L2) in the definition of the projective Fra\"{i}ss\'{e} limit, take $T\in \f$ and an epimorphism $\psi_1\colon \lel\to T$ refining $P$.
We have  that $p'=\psi_1(p)$, $q'=\psi_1(q)$ and $r'=\psi_1(r)$ are pairwise different and that
$R^T(p',q')$ (or $R^T(q',p')$, respectively) and $R^T(p',r')$.  Take $S$ which is equal to $T$ with $p'$ ``doubled'', i.e.
 let $S=T\cup\{ \bar{p}'\}$, $R^S(\bar{p}',\bar{p}')$,
 $R^S(p',\bar{p}'), R^S(\bar{p}',r')$, and for $x,y\in T$, $(x,y)\neq (p',r'),$ we let $R^S(x,y) $ if and only if
$R^T(x,y)$.
 Then $\phi\colon S\to T$ that sends $\bar{p}' $ to $p'$, and other points to themselves, is an epimorphism.
 Using the extension property,  we get an epimorphism $\psi_2\colon \lel\to S$ such
that $\psi_1=\phi\circ \psi_2$.
Then either $\psi_2(p)=p'$ or $\psi_2(p)=\bar{p}'$.
Either option leads to a contradiction.



\end{proof}

Take the quotient $\lel/R_S^{\lel}$ and denote it by $L$. 
Let $\pi\colon \lel\to L$ be the quotient map.

\begin{theorem}\label{Tlelek}
The space $L$ is the Lelek fan.
\end{theorem}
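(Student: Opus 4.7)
The plan is to invoke the Bula--Oversteegen and W.~Charatonik characterization: any non-degenerate subcontinuum of the Cantor fan $F$ whose endpoints form a dense set is homeomorphic to the Lelek fan. It therefore suffices to show that $L$ is a non-degenerate continuum, that $L$ embeds in $F$ (so is a smooth fan), and that the endpoints of $L$ are dense in $L$.

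The continuum structure of $L$ is the easy part. Compactness and the Hausdorff property follow from the closedness of $R_S^\lel$ as an equivalence relation on the compact Hausdorff space $\lel$ together with finiteness of its classes (of size one or two); second countability passes from $\lel$ to $L$, giving metrizability. For connectedness, suppose $L=A\sqcup B$ is a non-trivial clopen partition. Then $\{\pi^{-1}(A),\pi^{-1}(B)\}$ is a clopen cover of $\lel$, refined by Remark \ref{coveri} by some epimorphism $\phi\colon\lel\to T$ with $T\in\f$. Since $(T,R_S^T)$ is a connected graph, there are adjacent $t_1,t_2\in T$ with $\phi^{-1}(t_1)\subseteq\pi^{-1}(A)$ and $\phi^{-1}(t_2)\subseteq\pi^{-1}(B)$; the epimorphism condition gives $x,y\in\lel$ with $\phi(x)=t_1$, $\phi(y)=t_2$, and $R^\lel(x,y)$, whence $\pi(x)=\pi(y)\in A\cap B$, a contradiction.

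For the smooth-fan step I would construct a continuous map $q\colon\lel\to F$ using the inverse system of approximating fans. Fix a cofinal sequence of epimorphisms $\phi_n\colon\lel\to T_n$ in $\f$; for each $x\in\lel$, the values $\phi_n(x)$ sit on specific branches of $T_n$ at specific depths. Assembling these data produces a ``direction'' coordinate in a Cantor set (coming from the inverse limit of the branch sets $B(T_n)$) and a ``height'' coordinate in $[0,1]$, yielding a point $q(x)\in F$. I would then show that $q$ is continuous, that $q(x)=q(y)$ precisely when $R_S^\lel(x,y)$, and that a distinguished top $v\in L$ arises as the image of the shrinking clopens obtained from coherent root preimages; consequently $q$ factors through $\pi$ to an embedding $\iota\colon L\hookrightarrow F$ sending the top to the top. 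For density of endpoints, given a non-empty open $U\subseteq L$, Remark \ref{coveri} supplies an epimorphism $\phi\colon\lel\to T$ and a vertex $t\in T$ with $\phi^{-1}(t)\subseteq\pi^{-1}(U)$; amalgamating with a larger $S\in\f$ in which a branch has been extended past $t$ to a new leaf $\ell$, and applying the extension property (Proposition \ref{fraisse}) to lift to $\psi\colon\lel\to S$ refining $\phi$, I would argue that points of $\psi^{-1}(\ell)\subseteq\pi^{-1}(U)$ project under $\pi$ to endpoints of $L$ in $U$.

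The main obstacle is the construction of $\iota$: defining the direction and height coordinates coherently along the inverse system, and verifying that the fibers of $q$ coincide exactly with the $R_S^\lel$-classes, so that no two distinct chains in $\lel$ collapse to the same arc in $L$. This rests on a careful use of the ultrahomogeneity (L3) together with the closedness of $R^\lel$, which prevents spurious identifications and ensures continuity of the map into $F$. Once $\iota$ is in place, combining it with the endpoint-density argument and the Bula--Oversteegen/Charatonik uniqueness theorem identifies $L$ as the Lelek fan.
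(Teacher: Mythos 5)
Your high-level strategy is exactly the paper's: show $L$ is a continuum, embed it in the Cantor fan, prove density of endpoints, and invoke the Bula--Oversteegen/Charatonik uniqueness theorem. The continuum/connectedness argument is correct and matches the paper. However, the two substantive steps are left with genuine gaps.

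First, the embedding into $F$. You correctly identify the obstacle --- defining the ``direction'' coordinate coherently --- but do not resolve it, and it is not a routine verification. The branch sets $B(T_n)$ do not form an inverse system under the maps induced by the $f_n$: an epimorphism sends a branch of $T_{n+1}$ onto a proper initial segment of a branch of $T_n$ (indeed possibly onto the root alone), so the ``direction'' of a point is not determined level by level, and the heights of branches are not preserved. The paper's solution is to enlarge each $T_n$ to a fan $S_n\supseteq T_n$ and extend $f_n$ to $g_n\colon S_{n+1}\to S_n$ so that branches map \emph{onto} branches (condition (5)); the quotient of the resulting inverse limit is then visibly the Cantor fan, and the inclusions $T_n\subset S_n$ induce the injective homomorphism $\lel\to\fan$ and hence the embedding $L\hookrightarrow F$. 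Without some such device, your claim that the fibers of $q$ are exactly the $R_S^\lel$-classes is unsupported; this is the content of the theorem, not a consequence of ultrahomogeneity and closedness of $R^\lel$ alone.

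Second, density of endpoints. Producing a single level $S$ with a leaf $\ell$ and taking $\psi^{-1}(\ell)\subseteq\pi^{-1}(U)$ is not enough: $\psi^{-1}(\ell)$ is a clopen set containing many points that are not endpoints of $L$, since being a leaf at one finite stage says nothing about later stages. One must inductively choose leaves $e_n\in T_n$ with $f_{n-1}(e_n)=e_{n-1}$ for all $n$, and --- crucially --- even a point that is a leaf at every finite level is not obviously an endpoint of the quotient continuum. The paper closes this gap with a separate argument: assuming $\pi(e)$ lies in the interior of an arc, it takes the preimages $X,Y,Z$ of the two half-arcs and the whole arc, observes they are compact and $R$-connected and eventually lie in a single branch of $T_n$, and derives a contradiction from the linear order along that branch. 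Your proposal contains no substitute for this step, so as written the density claim does not follow.
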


In order to prove Theorem \ref{Tlelek}, we will show that $L$ is a continuum, it embeds into the Cantor fan $F$ and has a dense set of endpoints.

\begin{lemma}
The space $L$ is Hausdorff, compact,  second-countable, and connected.
\end{lemma}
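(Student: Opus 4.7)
The plan is to dispatch compactness, Hausdorffness, and second-countability by standard quotient arguments, and to handle connectedness using Remark~\ref{coveri} together with the fact that each member of $\f$ is a tree.

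First, $\lel$ is compact Hausdorff, zero-dimensional, and second-countable by the definition of a topological $\mathcal{L}$-structure, so it is compact metrizable. Hence $L=\pi(\lel)$ is compact as a continuous image. The relation $R^{\lel}$ is closed in $\lel\times\lel$ by definition, and $R_S^{\lel}=R^{\lel}\cup(R^{\lel})^{-1}$ is the union of two closed sets, hence closed; so the quotient of a compact Hausdorff space by a closed equivalence relation is Hausdorff, giving $L$ Hausdorff. The quotient map $\pi$ is therefore a closed continuous surjection from a compact metrizable space with every fiber of size at most $2$, from which one concludes that $L$ is metrizable, hence second-countable.

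For connectedness (the main point), I would argue by contradiction. Suppose $L=U\sqcup V$ with $U,V$ nonempty disjoint open sets. Then $\pi^{-1}(U)$ and $\pi^{-1}(V)$ form a clopen partition of $\lel$ by saturated sets. By Remark~\ref{coveri} applied to the open cover $\{\pi^{-1}(U),\pi^{-1}(V)\}$, there exist $T\in\f$ and an epimorphism $\phi\colon\lel\to T$ such that each fiber $\phi^{-1}(t)$ lies entirely in $\pi^{-1}(U)$ or entirely in $\pi^{-1}(V)$. Set
\[
T_U=\{t\in T:\phi^{-1}(t)\subseteq\pi^{-1}(U)\},\qquad T_V=\{t\in T:\phi^{-1}(t)\subseteq\pi^{-1}(V)\},
\]
both nonempty by surjectivity of $\phi$.

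The hard part is to see that no edge of $T$ crosses $T_U$ and $T_V$. If $R^T(t_1,t_2)$ with $t_1\ne t_2$, then by the definition of an epimorphism there exist $x_1\in\phi^{-1}(t_1)$ and $x_2\in\phi^{-1}(t_2)$ with $R^{\lel}(x_1,x_2)$. In particular $R_S^{\lel}(x_1,x_2)$, so $\pi(x_1)=\pi(x_2)$, which forces $x_1$ and $x_2$ to lie in the same member of $\{\pi^{-1}(U),\pi^{-1}(V)\}$; hence $t_1,t_2$ lie in the same part of $\{T_U,T_V\}$. Thus $T_U$ and $T_V$ give a separation of $T$ as a graph, contradicting the fact that $T$ is a tree (and in particular connected). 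This resolves the one genuine obstacle and completes the argument.
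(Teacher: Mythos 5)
Your proposal is correct and follows essentially the same route as the paper: the topological properties are dispatched by standard quotient arguments, and connectedness is obtained by refining the pullback of a hypothetical clopen partition by an epimorphism onto a tree $T\in\f$, then using that edges of $T$ lift to $R^{\lel}$-related pairs, which $\pi$ identifies. The only cosmetic difference is that you locate the contradiction in the disconnectedness of $T$ as a graph, whereas the paper uses the connectedness of $T$ to produce a crossing pair $x,y$ with $R^{\lel}(x,y)$ and $\pi(x)=\pi(y)$; these are the same argument up to contraposition.
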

\begin{proof}
Since $\lel$ and $R_S^{\lel}$ are compact and $\pi$ is continuous, it  follows
that $L$ is Hausdorff, compact, and second-countable, since $\lel$ is such.

Suppose, towards a contradiction, that $L$ is not connected. Let $U$ be a clopen non-empty subset of $L$ such that 
$L\setminus U$ is also non-empty. Let $V=\pi^{-1}(U)$. Let $T\in \f$ and let $\phi\colon \lel\to T$ be an epimorphism
refining the partition $\{V,\lel\setminus V\}$. It follows that there are $x\in V$ and $y\in \lel\setminus V$ such that 
$R^{\lel}(x,y)$. Since $\pi(x)=\pi(y)$, we get a contradiction.

\end{proof}


\smallskip

We call a sequence $(T_n,f_n)$ 
an \emph{inverse sequence} if $T_n\in \f$ and $f_n\colon T_{n+1}\to T_n$ are epimorphisms for every $n$. We will denote by $f^n_m$ the composition $f_{m}\circ\ldots\circ f_{n-1}$, whenever  $m<n$,
and $f^m_m=\text{Id}_{T_m}$. If $\mathbb{T}$ is the inverse limit of $(T_n,f_n),$ then there is a sequence of epimorphisms $f^\infty_n\colon \mathbb{T} \to T_n$  
such that  $f^n_m\circ f^\infty_n= f^\infty_m$. If $(T_n, f_n)$ and $(S_n,g_n)$ are two inverse sequences with inverse limits $\mathbb{T}$ and $\mathbb{S}$ respectively, and for every $n$ there is an injective homomorphism $\iota_n\colon T_n\to S_n$ such that $\iota_n\circ f_n=g_n\circ \iota_{n+1},$ then there is a continuous homomorphic embedding  $\iota_{\infty}\colon \mathbb{T}\to\mathbb{S}$ satisfying $\iota_n\circ f^{\infty}_n=g^{\infty}_n\circ\iota_{\infty}.$ 

Following the proof of Theorem 2.4 in \cite{IS}, we can write $\lel$ as the inverse limit of an inverse sequence $(T_n,f_n)$  satisfying the following properties.

\begin{enumerate}

\item For any $T\in\f$ there is an $n$ and an epimorphism from $T_n$ onto $T$.

\item For any $m$, any $S,T\in\f$, and  epimorphisms $\phi_1\colon T_m\to T$ and
$\phi_2\colon S\to T$, there exists $m < n$ and an epimorphism $\phi_3\colon T_n\to S$ such that
$\phi_1\circ f_m^n= \phi_2\circ\phi_3$.

\end{enumerate}

For $T\in\f$ let as before $B(T)$ denote the set of branches of $T$.

By passing to a subsequence, we can assume that  $(T_n,f_n)$ moreover satisfies: 

\begin{enumerate}
\item[(3)] For every $b\in B(T_{n+1})$ and $x\in b$, there is $x'\in b$, $x'\neq x$, such that $f_n(x)=f_n(x')$.

\item[(4)] For every  $b\in B(T_{n})$ there are $b_1\neq b_2\in B(T_{n+1})$ such that $f_n(b_1)=f_n(b_2)=b$.
 \end{enumerate}

Any sequence $(T_n,f_n)$ that satisfies properties (1), (2), (3), and (4) above will be called a {\em Fra\"{i}ss\'{e} sequence}.


Our  goal now is to show the following proposition.

 \begin{proposition}\label{embed}
The continuum  $L$ can be embedded into the Cantor fan~$F$.
 \end{proposition}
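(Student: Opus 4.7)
The plan is to realize $\lel$ as the inverse limit of a Fra\"{i}ss\'{e} sequence $(T_n,f_n)$ and to build geometric embeddings $e_n\colon T_n\to F$ that are compatible with the bonding maps, so that the composed maps $e_n\circ f_n^\infty\colon \lel\to F$ form a uniformly Cauchy sequence of continuous functions. The resulting uniform limit $e\colon\lel\to F$ should then descend through the quotient map $\pi\colon\lel\to L$ to a topological embedding $L\hookrightarrow F$.

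For each $n$ I would define $e_n$ by sending the root of $T_n$ to the top $v$ and each branch $b=(b(0),\dots,b(k_n))$ isometrically onto a segment $[v,(c^n_b,1)]$ of $F$ via $b(i)\mapsto (c^n_b,i/k_n)$. The directions $c^n_b\in C$ are chosen pairwise distinct at each level, and so that whenever $b'\in B(T_{n+1})$ sits above $b\in B(T_n)$ (in the sense that the walk $f_n|_{b'}$ stays inside $b$), the distance between $c^{n+1}_{b'}$ and $c^n_b$ in $C$ is at most $2^{-n}$. Property~(4) of a Fra\"{i}ss\'{e} sequence supplies at least two branches of $T_{n+1}$ above each branch of $T_n$, which leaves enough room to place the new labels inside shrinking neighborhoods of the old ones, just as in the standard construction of a Cantor set.

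To control the vertical coordinate, I would further refine $(T_n,f_n)$, using the amalgamation property together with the extension property from Proposition~\ref{fraisse}, so that every walk $f_n|_{b'}\colon b'\to b$ is monotone in height and satisfies $|j/k_{n+1}-i_j/k_n|\leq 2^{-n}$, where $b(i_j)=f_n(b'(j))$. With both compatibilities in place, a direct estimate in the cone metric on $F$ yields a bound of the form $d_F\bigl(e_n(f_n(y)),e_{n+1}(y)\bigr)\leq 2^{-n+1}$ for every $y\in T_{n+1}$, which is enough for $(e_n\circ f_n^\infty)_n$ to be uniformly Cauchy on $\lel$ and to converge to a continuous limit $e\colon\lel\to F$.

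Finally, one verifies that $e$ identifies exactly the $R^\lel_S$-equivalence classes. If $R^\lel_S(x,y)$ holds with $x\neq y$, then $x_n$ and $y_n$ are $R^{T_n}$-adjacent at every level and their $e_n$-images differ by at most $1/k_n\to 0$, forcing $e(x)=e(y)$. Conversely, if $R^\lel_S(x,y)$ fails then from some level onwards $x_n$ and $y_n$ lie at tree-distance at least two in $T_n$, a separation preserved by homomorphisms, and the uniform control on normalized heights forces $e(x)\neq e(y)$. The induced map $\bar e\colon L\to F$ is then a continuous injection from a compact space to a Hausdorff one, hence an embedding. The main obstacle is the second refinement: obtaining simultaneous control on both the branching directions in $C$ and the height parametrization of the stuttering walks requires extracting more from the amalgamation property than properties~(3) and~(4) supply on their own.
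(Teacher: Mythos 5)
Your overall strategy---realizing $\lel$ as the inverse limit of a Fra\"{i}ss\'{e} sequence and producing a map to $F$ that identifies exactly the $R^{\lel}_S$-classes---is genuinely different from the paper's, which avoids metrics entirely: the paper enlarges each $T_n$ to a fan $S_n\supseteq T_n$ with $g_n\restriction T_{n+1}=f_n$ so that every branch of $S_{n+1}$ maps \emph{onto} a full branch of $S_n$ (condition (5)); the quotient of the resulting inverse limit $\fan$ is then the Cantor fan, and the inclusions induce the embedding $L\hookrightarrow F$ directly. A minor remark first: your worry about non-monotone walks is unnecessary. Since $R^{T_n}(s,t)$ holds only when $t$ equals $s$ or is an immediate successor of $s$, any homomorphism automatically sends a branch to a monotone non-decreasing walk that increases by at most one at each step; no refinement is needed for that.

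The genuine gap is in the height control you ask for. You normalize each branch by $b(i)\mapsto(c^n_b,\,i/k_n)$ and then want $|j/k_{n+1}-i_j/k_n|\leq 2^{-n}$ for all $j$, where $b(i_j)=f_n(b'(j))$. Applied to the endpoint $j=k_{n+1}$ this says $|1-m/k_n|\leq 2^{-n}$, where $m$ is the length of the image $f_n(b')$. But property (2) of a Fra\"{i}ss\'{e} sequence forces, cofinally, branches of $T_{n+1}$ whose image in $T_n$ is a \emph{proper} initial segment with $m/k_n$ bounded away from $1$---this is exactly the mechanism that produces endpoints of $L$ at intermediate heights, i.e.\ the density of $E$. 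So no refinement of the sequence can achieve your estimate; it is inconsistent with $(T_n,f_n)$ being a Fra\"{i}ss\'{e} sequence, not merely ``more than (3) and (4) supply.'' Without it, $d_F(e_n(f_n(y)),e_{n+1}(y))$ is not small, the sequence $(e_n\circ f^\infty_n)$ is not uniformly Cauchy, and the injectivity argument in your last paragraph (which leans on the same height control) collapses. The fix is to stop normalizing branches independently and instead \emph{inherit} heights: define $\pi_2(e_{n+1}(b'(j)))$ as $\pi_2(e_n(f_n(b'(j))))$ plus corrections summing to at most $2^{-n}$ along each branch, using (3) to make heights eventually strictly increase along each limiting arc (this is in effect the paper's observation that $\ian/R^{\ian}_S\cong[0,1]$); or simply follow the paper's combinatorial enlargement, which sidesteps all of these estimates.
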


Let $\ian$ be the inverse limit of any inverse sequence $(I_n,h_n)$, where $I_n$ is a finite chain and 
$h_n\colon I_{n+1}\to I_{n}$ is an epimorphism such that
for every $x\in I_{n+1}$, there is $x'\in I_{n+1}$, $x'\neq x$, with $h_n(x)=h_n(x').$ Then it is easily seen that $R^{\ian}_S$ has only one and two element equivalence classes and $\ian/R^{\ian}_S$ is homeomorphic to the unit interval $[0,1].$

An inverse limit of an inverse sequence $(C_n,e_n),$ where $C_n$ is a finite set and $e_n\colon C_{n+1}\to C_n$ is a surjection such that for every $x\in C_{n+1}$ there is $x'\in C_{n+1},$ $x'\neq x$ with $e_n(x)=e_n(x'),$ is clearly homeomorphic to the Cantor set.

It follows that if $\fan$ is the inverse limit of an inverse sequence $(S_n,g_n)$ 
  satisfying the  conditions (3) and (4) in the definition of a Fra\"{i}ss\'{e} sequence and a condition (5) below,  
	then $R^{\fan}_S$ has only one and two element equivalence classes and $\fan/R^{\fan}_S$ is homeomorphic to the Cantor fan $F.$ 

\begin{itemize}

\item[(5)]  For every $b\in B(S_{n})$ and  $b'\in B(S_{n+1})$ such that 
$g_{n}(b')\subseteq b$, we have $g_{n}(b')= b$.
\end{itemize}

We will find an injective,  continuous homomorphism  $h\colon \lel\to \fan$, 
which will induce a topological embedding 
from $L$ into $F$.

\begin{lemma}\label{three}
Suppose that 
 $({T}_n,{f}_n)$ is a Fra\"{i}ss\'{e} sequence. 
Then there is  an inverse sequence  $(S_n,g_n)$ 
satisfying (3),(4) and (5) above 
such that $T_n\subset S_n$ and $g_n \restriction {T}_{n+1}=f_n$ for every $n.$

In particular,  the inclusions induce a continuous injective homomorphism $h$ from the inverse limit $\lel$ of  $({T}_n,{f}_n)$ to the inverse limit $\fan$ of $(S_n,g_n).$
\end{lemma}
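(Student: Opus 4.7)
The plan is to construct $(S_n,g_n)$ by induction on $n$, taking $S_n$ to be the fan obtained from $T_n$ by attaching to the tip of each branch of $T_n$ a ``stalk'' of $D_n$ fresh distinct vertices. For a suitable choice of the integers $D_n$, each $S_n$ is then a finite rooted reflexive fan whose branches have the common length $L_n := k_n+D_n$ (where $k_n$ is the common length of branches of $T_n$), and the inclusion $T_n\subset S_n$ realises each $T_n$-branch as the initial segment of a unique $S_n$-branch.

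Given $S_n$, $S_{n+1}$ and $D_{n+1}$, I define $g_n\colon S_{n+1}\to S_n$ on a branch $\tilde b'$ of $S_{n+1}$ extending a branch $b'$ of $T_{n+1}$ by letting $g_n$ agree with $f_n$ on $b'\subseteq\tilde b'$, and by prescribing on the $D_{n+1}$ stalk nodes of $\tilde b'$ a walk along the unique $S_n$-branch $B=B(b')$ that extends the $T_n$-branch containing $f_n(b')$. The walk starts at $f_n(b'(k_{n+1}))$, proceeds by stay-or-immediate-successor steps, and ends at the tip of $B$; the uniform bound $D_{n+1}\geq 2L_n$ leaves enough room for every vertex on the path from $f_n(b'(k_{n+1}))$ to the tip of $B$ to be visited at least twice. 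This simultaneously forces $g_n(\tilde b')=B$, which will give (5), and supplies the duplications needed for (3).

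The resulting $g_n$ is a homomorphism because each elementary step of its definition is either a reflexive stay or a genuine immediate-successor step in $S_n$, and because the only cross-branch $R^{S_{n+1}}$-pair sits at the common root. It is surjective since every $B\in B(S_n)$ extends some $b\in B(T_n)$ and, by (4) for $(T_n,f_n)$, at least one branch of $T_{n+1}$ maps onto $b$, whose $S_{n+1}$-extension then sweeps through all of $B$; running the same argument with two distinct lifts yields (4) for $(S_n,g_n)$. Condition (5) is automatic from $g_n(\tilde b')=B$, since distinct branches of a fan share only the root. For (3), nodes in the $T_{n+1}$-portion of $\tilde b'$ get their duplicates from (3) applied to $(T_n,f_n)$, and stalk nodes whose images lie in $B\setminus f_n(b')$ are duplicated by the ``visit-twice'' feature of the walk.

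Finally, the inclusions $\iota_n\colon T_n\to S_n$ are injective homomorphisms satisfying $\iota_n\circ f_n=g_n\circ\iota_{n+1}$, since $g_n$ restricts to $f_n$ on $T_{n+1}$. The inverse-limit principle recalled just before the lemma then delivers a continuous homomorphic embedding $h\colon\lel\to\fan$ with $\iota_n\circ f_n^{\infty}=g_n^{\infty}\circ h$, which is the statement. The only real freedom in the argument is the growth rate of the sequence $D_n$; the uniform bound $D_{n+1}\geq 2L_n$ settles every branch simultaneously, so the main obstacle is simply the indexing bookkeeping (branch lengths, depths, and the inductive choice of $D_n$) across levels.
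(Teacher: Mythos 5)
Your construction is correct and is essentially the paper's own: the paper likewise builds $S_{n+1}$ by appending to each branch $b$ of $T_{n+1}$ a tail that sweeps through the vertices of the corresponding $S_n$-branch lying above $f_n(b)$, hitting each one exactly twice (a pair $x_1,x_2$ per vertex), which is just the special case of your ``visit-each-vertex-twice'' walk; surjectivity and (4) are obtained from (4) for $(T_n,f_n)$ in the same way. Your uniform choice of stalk length $D_{n+1}\geq 2L_n$ has the minor added benefit of making all branches of $S_{n+1}$ the same length (so $S_{n+1}\in\f$), a point the paper leaves implicit.
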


\begin{proof}

Let ${S}_0=T_0.$ Suppose that ${S}_k$ and ${g}_{k-1}$ 
have been constructed for $k\leq n.$ We will construct $S_{n+1}$ from $T_{n+1}$ by adding nodes and branches and we will define $g_n:S_{n+1}\to S_n$ 
to be equal to $f_n$ when restricted to $T_{n+1}.$ For every $b\in B(T_{n+1}),$ 
let  $b'\in B(S_n)$  be the branch 
such that  $ f_n(b)\subset b'.$ Let $e,e'$ denote the endpoints of $b,b'$ respectively, and let  $m_{b'}= f_n(e).$ 
For every $x\in b'$ such that $m_{b'}<_{S_n} x$, we will put two points $x_1\neq x_2$ into $S_{n+1}$ and set $R^{S_{n+1}}(x_1,x_2),$ $R^{S_{n+1}}(x_i,x_i)$  and $g_{n}(x_i)=x$ for $i=1,2$. 
If $R^{S_n}(m_{b'},x),$ then $R^{S_{n+1}}(e,x_1).$ If $m_{b'}<_{S_n} x <_{S_n} y$ and $R^{S_n}(x,y),$ then $R^{S'_{n+1}}(x_2,y_1).$ Finally, 
for every branch $c$ in $S_n\setminus T_n\cup \{r_{T_n}\},$ 
we will add two branches $c_1,c_2$ to $S_{n+1}$ that map onto $c$ under $g_{n}$ and such that for every $x\in c$ there are $x'\neq x''\in c_i$ such that $g_n(x')=g_n(x'')=x$ for $i=1,2.$

\end{proof}

\begin{proof}[Proof of Proposition \ref{embed}]
The continuous injective homomorphism $h$ from Lemma \ref{three} induces a continuous embedding between the respective quotients $L$ and $F$.
\end{proof}

\bigskip

Finally, we show the density of endpoints of $L$.
Let $A$ be a topological $\mathcal{L}$-structure.
We say that $K\subseteq A$ is {\em R-connected} if for every two non-empty, disjoint clopen subsets $K_1,K_2$ in $K$  such that $K_1\cup K_2=K$,
there are $x\in K_1$ and $y\in K_2$ such that $R^A(x,y)$ or $R^A(y,x)$.
We again consider $\lel$ as an inverse limit of a Fra\"{i}ss\'{e} sequence  $(T_n,f_n)$.
Let $r_n=r_{T_n}$ denote the root of $T_n$, and $r=(r_n)$  the top of $\lel$. Recall that $\pi\colon\lel\to L$ is the quotient map.

\begin{proposition}
The set $E$ of all endpoints in $L$ is dense in $L$.
\end{proposition}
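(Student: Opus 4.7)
Given a nonempty open $U \subseteq L$, let $V = \pi^{-1}(U) \subseteq \lel$ and apply Remark~\ref{coveri} to obtain an epimorphism $\phi_0 \colon \lel \to T$, with $T \in \f$ and $t \in T$, such that $\phi_0^{-1}(t) \subseteq V$. The first step is to arrange that $t$ is a leaf. To this end, enlarge $T$ within $\f$ to a fan $T'$ with a distinguished leaf $t'$ and an epimorphism $\psi \colon T' \to T$ satisfying $\psi(t') = t$; concretely, adjoin to $T$ one extra branch of the same depth and map it along the branch of $T$ through $t$, collapsing the segment above $t$ (so the new leaf $t'$ is sent to $t$), while the remaining branches of $T'$ map identically onto those of $T$. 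The extension property (Proposition~\ref{fraisse}) then produces $\phi \colon \lel \to T'$ with $\phi_0 = \psi \circ \phi$, so $\phi^{-1}(t') \subseteq V$ and $t'$ is a leaf of $T'$.

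Next, represent $\lel$ as the inverse limit of a Fra\"{i}ss\'{e} sequence $(T_n, f_n)$, reindexed so that $T_0 = T'$ and $\phi = f_0^\infty$. I build a point $p = (p_n) \in \lel$ inductively by setting $p_0 = t'$, and at each step using property~(4) to select a branch $b \in B(T_{n+1})$ whose image $f_n(b)$ is the branch of $T_n$ ending at $p_n$; since any surjective homomorphism between chains is monotone, the leaf of $b$ must map to the leaf of $f_n(b)$, namely to $p_n$. Setting $p_{n+1}$ equal to this leaf yields a leaf of $T_{n+1}$ with $f_n(p_{n+1}) = p_n$. The resulting $p$ lies in $\phi^{-1}(t') \subseteq V$, and every coordinate $p_n$ is a leaf of $T_n$.

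It remains to show $\pi(p)$ is an endpoint of $L$. Use the embedding $h \colon \lel \hookrightarrow \fan$ from Lemma~\ref{three}, which induces an embedding $L \hookrightarrow F$. It suffices to verify that $h(p) = (p_n) \in \fan$ has every coordinate $p_n$ a leaf of $S_n$: then the image of $h(p)$ in $F = \fan/R^{\fan}_S$ lies at normalized height~$1$, i.e., in $C \times \{1\}$, which is the set of endpoints of $F$, and endpoints of $F$ lying in $L$ are endpoints of $L$. I prove this claim by induction on $n$. The base case $S_0 = T_0$ is immediate. For the inductive step, inspect the construction of $S_{n+1}$ in Lemma~\ref{three}: the leaf $p_{n+1} \in T_{n+1}$ acquires an immediate successor in $S_{n+1}$ only in ``Case~2'', i.e., when $m_{b'} = f_n(p_{n+1}) <_{S_n} e'$, where $e'$ is the leaf of the branch $b' \in B(S_n)$ containing $f_n(b)$. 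By the inductive hypothesis, $p_n = f_n(p_{n+1})$ is a leaf of $S_n$, hence must coincide with $e'$, so in fact we are in ``Case~1'' and nothing is attached above $p_{n+1}$. The extra branches added in that construction are disjoint from $T_{n+1}$ and so do not disturb $p_{n+1}$ either. This completes the induction and the proof. The main obstacle is precisely this last induction, which requires carefully unpacking the construction in Lemma~\ref{three} to confirm that our carefully chosen leaves of $T_n$ persist as leaves of $S_n$.
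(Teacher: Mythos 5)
Your first half---building $p=(p_n)$ as a coherent thread of leaves through the Fra\"{i}ss\'{e} sequence so that $p$ lands in $V$---is essentially what the paper does; the paper merely avoids your re-indexing step by factoring through some $T_{n_2}$ (using property (2) of the sequence) and choosing an endpoint in the preimage of the distinguished leaf there, instead of prepending the modified fan $T'$ to the sequence. Where you genuinely depart is the verification that $\pi(p)$ is an endpoint. The paper argues intrinsically in $L$: assuming $\pi(p)$ lies in the interior of an arc, it takes the preimages in $\lel$ of the two half-arcs, which are compact and $R$-connected, pushes them down to $R$-connected subsets of a single branch of $T_n$ for large $n$, and uses $r_n<_{T_n}x_n<_{T_n}y_n<_{T_n}e_n$ to force the two halves to overlap, a contradiction. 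You instead pass through the embedding of Lemma \ref{three}, proving by induction that your leaves of $T_n$ remain leaves of the enlarged fans $S_n$, so that $h(p)$ is the topmost point of its spoke of the Cantor fan and hence an endpoint of $F$, hence of $L$. Your induction is correct: $p_{n+1}$ acquires a successor in $S_{n+1}$ only if $m_{b'}=f_n(p_{n+1})=p_n$ fails to be maximal in its branch of $S_n$, which the inductive hypothesis excludes, and the extra branches attached at the root do not interfere. The trade-off is that your endpoint check becomes a concrete statement about the Cantor fan, at the price of leaning on the particular (and only sketched) construction of Lemma \ref{three}; moreover two of your steps are asserted rather than proved: that a given epimorphism $\phi\colon\lel\to T'$ can be realized as the first projection of a Fra\"{i}ss\'{e} sequence still satisfying (3) and (4), and that all-leaves points of $\fan$ land in $C\times\{1\}$ under the identification $\fan/R_S^{\fan}\cong F$. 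Both are true (the first by the standard factorization of $\phi$ through a sufficiently deep level of an existing sequence, after which (3) and (4) are inherited; the second because such a point is the maximum of the natural order on its spoke while the root, which maps to the top $v$, is the minimum), but each is doing real work and deserves a sentence.
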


\begin{proof}
 Let $U\subseteq L$ be open and non-empty. 
We will find an endpoint in $U$. Let $V=\pi^{-1}(U)$. 
Take $n_1$ such that there is $e_{n_1}\in T_{n_1}$ with $(f^\infty_{n_1})^{-1}( e_{n_1})\subseteq V$.
Let $T\in\f$, $\psi_1\colon T\to T_{n_1}$, and $x\in T$ be such that $\psi_1(x)=e_{n_1}$ and $x$ is an endpoint of $T$ 
(i.e., 
for no $y\in T$, $y\neq x$, we have $R^T(x,y)$). 
Using that $(T_n,f_n)$ is a Fra\"{i}ss\'{e} sequence, find $n_2$ and $\psi_2\colon  T_{n_2}\to T$ 
with $f^{n_2}_{n_1}=\psi_1\circ \psi_2$. 
Pick any endpoint $e_{n_2}\in T_{n_2}$ in the preimage of $x$ by $\psi_2$. 
For $n>n_2$ inductively pick an endpoint $e_n$ in $T_n$ such that $f^n_{n-1}(e_n)=e_{n-1}$ and 
for $n<n_2$ let $e_n=f^{n_2}_n(e_{n_2})=e_n$. 
Then $e=(e_n)\in V,$ and therefore $\pi(e)\in U$.
Moreover, $e$ is not the root of $\lel$ as $e_{n_2}$ is not the root of $T_{n_2}$.
By the property (2) in the definition of a Fra\"{i}ss\'{e} sequence, $\pi^{-1}(\pi(r))=\{r\}$ for $r$ the root of $\lel$. Consequently, $\pi(e)\neq\pi(r)$.

We show that $\pi(e)\in E$.  
 Let $i\colon [0,1]\to L$ be a homeomorphic embedding  such that $\pi(e)\in i(I).$
Suppose towards a contradiction that $\pi(e)\neq i(0)$ and $\pi(e)\neq i(1)$.  Without loss of generality,  $\pi(r)\notin i(I).$
Denote 
 $X=\pi^{-1}(i([i^{-1}(\pi(e)),1]))$,  $Y=\pi^{-1}(i([0,i^{-1}(\pi(e))]))$, and $Z=\pi^{-1}(i([0,1]))$.
All three sets $X, Y, Z$ are compact, $R$-connected in $\lel$ and $e\in X\cap Y$ 
Let  $X_n=f^\infty_n(X)$, $Y_n=f^\infty_n(Y)$, and $Z_n=f^\infty_n(Z)$.
All  sets $X_n,Y_n, Z_n$ are $R$-connected in $T_n$.
Since  $\pi(r)\notin i(I),$ there is $N>n_2$ such that whenever $n>N$, $Z_n$ (and so $Y_n$ and 
$X_n$) is contained in a single branch of $T_n.$

Let $x=(x_n)\in X\setminus Y$ and let $y=(y_n)\in Y\setminus X$. 
We have that ${e}=(e_n)\in X\cap Y$.
Then  either for every $n>N$,  $r_n<_{T_n} x_n<_{T_n} y_n<_{T_n} {e}_n$, or  for every  $n>N$, $r_n<_{T_n} y_n<_{T_n} x_n<_{T_n} {e}_n$.
Without loss of generality, we may assume that the former holds.
Since for every $n$, we have $x_n,e_n\in X_n$, $R$-connectivity of each $X_n$ implies $y_n\in X_n$ for $n>N.$ Therefore $y\in X$, which is a contradiction.

\end{proof}


\subsection{Properties of the Lelek fan: projective universality and projective ultrahomogeneity}\label{pu}

The main goal of this subsection is to prove Theorem \ref{uph}. This is an analog of Theorem 4.4. in \cite{IS}.

Let $\aut$ be the group of all automorphisms of $\lel$, that is, the group of all homeomorphisms of $\lel$ that preserve the relation $R^{\lel}$.
This is a topological  group when equipped with the compact-open topology inherited from $H(\lel)$,
 the group of all homeomorphisms of the Cantor set underlying the structure $\lel$.
Since $R^\lel$ is closed in $\lel\times \lel$, the  group $\aut$ is closed in $H(\lel)$. 

We will denote by $H(L)$ the group of homeomorphisms of the Lelek fan with the compact-open topology.

\begin{remark}\label{topo}
\begin{enumerate}
\item Every $h\in \aut$ induces a homeomorphism  $h^*\in H(L)$
satisfying $h^*\circ\pi(x)=\pi\circ h(x)$ for $x\in\lel$. 
The map $h\to h^*$ is injective and 
we will frequently identify $\aut$ with the corresponding subgroup $\{h^*\colon h\in \aut\}$ of $H(L)$. 
\item The group $\aut$ is nontrivial by projective ultrahomogeneity, which immediately implies that $H(L)$ is nontrivial.
\item The compact-open topology  on $\aut$ is finer than the topology on $\aut$ that is inherited from the compact-open 
topology on $H(L)$.
\end{enumerate}
\end{remark}

A continuum is {\em hereditarily unicoherent} if the intersection of any two subcontinua is connected.
A {\em dendroid} is a hereditarily unicoherent and arcwise connected continuum.
A point $x$ of a dendroid $X$ is a {\em ramification point} if there are $a,b,c\in X$ and arcs $ax, bx, cx$ such that $ax\cap bx=\{x\}$,
$bx\cap cx=\{x\}$, and $ax\cap cx=\{x\}$.
A {\em fan} is a dendroid that has exactly one ramification point, called the {\em top}.
A {\em smooth fan} $X$ is a fan such that   whenever $t_n\to t$, $t_n, t\in X$ then the sequence of
arcs $t_nw$ converges to the arc $tw$ (in the Hausdorff metric),  where $w$ is the top point of $X$.
Smooth fans  are exactly  fans that can be embedded into the Cantor fan
 (see \cite{CC}, Proposition 4). These are exactly non-degenarate subcontinua of the Cantor fan $F$
 that are not homeomorphic to the interval [0,1].

We say that a continuous surjection $f\colon L\to X$, where $X$ is a smooth fan, is {\em monotone on segments} if 
$f(v)=w$, where $v$ is the top of $L$ and $w$ is the top of $X$, and 
for every $x,y\in L$ such that $x\in[v,y]$, we have $f(x)\in [w,f(y)]$.

\begin{theorem}\label{uph}
\begin{enumerate}
\item Each smooth fan is a continuous image of the Lelek fan $L$ via a map that is monotone on segments.
\item Let $X$ be a smooth fan with a metric $d$.
If $f_1, f_2\colon L\to X$ are two continuous  surjections that are monotone on segments,  then for any $\epsilon>0$ there exists $h\in \aut$
such that for all $x\in L$, $d(f_1(x),f_2\circ h^*(x))<\epsilon$.
\end{enumerate}
\end{theorem}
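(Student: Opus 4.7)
The plan is to handle both parts by first representing any smooth fan $X$ as a quotient $\x/R_S^{\x}$ where $\x$ is the inverse limit of an inverse sequence $(Y_n,\sigma_n)$ with $Y_n\in\f$ and $\sigma_n$ epimorphisms, and then transferring maps of this quotient back to the projective Fra\"{\i}ss\'{e} limit $\lel$. Part~(1) uses the extension property (Proposition~\ref{fraisse}); part~(2) uses projective ultrahomogeneity (L3).

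For~(1), the first step is to produce such an inverse-sequence representation of $X$. Since every smooth fan embeds into the Cantor fan $F=\fan/R_S^{\fan}$, I would take the $R_S^{\fan}$-preimage of $X$ inside $\fan$, approximate it at each level $n$ by a subtree $Z_n$ of $S_n$, and pad shorter branches to obtain $Y_n\in\f$ with compatible bonding epimorphisms $\sigma_n\colon Y_{n+1}\to Y_n$ (the construction is dual in spirit to Lemma~\ref{three}). The second step is inductive: starting from $\phi_0$ given by projective universality (L1), use Proposition~\ref{fraisse} to extend at each stage to an epimorphism $\phi_{n+1}\colon\lel\to Y_{n+1}$ satisfying $\sigma_n\circ\phi_{n+1}=\phi_n$. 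By the universal property of inverse limits these assemble into a continuous $R$-preserving surjection $\phi_\infty\colon\lel\to\x$, which carries $R_S^{\lel}$-classes into $R_S^{\x}$-classes and hence descends to a continuous surjection $f\colon L\to X$. Monotonicity on segments is automatic because each $\phi_n$ preserves the ancestor--descendant order in $Y_n$, and this order corresponds exactly to the segment order in the quotient fan.

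For~(2), fix $\epsilon>0$ and the inverse-sequence representation of $X$ from~(1). Choose $N$ large enough that any continuous monotone-on-segments surjection $L\to X$ is uniformly $\epsilon/2$-approximated by a composition of the form $L\to|Y_N|\hookrightarrow X$, where $|Y_N|$ denotes the topological realization of $Y_N$. Using Remark~\ref{coveri} applied to the $f_i$-pullback of a fine enough cover of $X$, I would produce, for each $i=1,2$, an epimorphism $\phi_i\colon\lel\to Y_N$ inducing such an approximation $g_i\colon L\to X$ of $f_i$ through a common realization map $|Y_N|\hookrightarrow X$. Projective ultrahomogeneity (L3) then yields $h\in\aut$ with $\phi_2\circ h=\phi_1$; consequently $g_2\circ h^*=g_1$, and a triangle inequality applied to the $\epsilon/2$-approximations gives $d(f_1(x),f_2(h^*(x)))<\epsilon$ for all $x\in L$.

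The main technical obstacle in (1) is showing that every smooth fan $X\subseteq F$ admits the desired inverse-sequence representation with $Y_n\in\f$: the preimage of $X$ in $\fan$ typically has branches of differing lengths in each $S_n$, whereas members of $\f$ must have all branches of equal length, so a careful padding argument is required, together with a proof that the resulting $R_S$-quotient is genuinely homeomorphic to $X$ and not some thickening. In~(2) the parallel difficulty is making the uniform approximation of a continuous monotone-on-segments surjection $L\to X$ by a ``combinatorial'' map $L\to|Y_N|\hookrightarrow X$ fully rigorous, in particular specifying how an epimorphism $\phi\colon\lel\to Y_N$ induces a continuous map $L\to|Y_N|$ at doubled points (via a midpoint convention, for example); this is technically delicate but follows from standard compactness arguments once the mesh of $|Y_N|\hookrightarrow X$ is controlled.
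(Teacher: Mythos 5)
Your overall architecture matches the paper's: represent $X$ as a quotient of a topological $\mathcal{L}$-structure built from fans in $\f$, obtain the surjection in (1) from the extension property (the paper invokes Proposition \ref{refine}, whose proof is exactly your level-by-level induction), and in (2) push both $f_1,f_2$ down to a common finite fan and apply (L3). Part (1) is essentially sound, though the paper sidesteps your ``padding'' obstacle by an explicit coding (Proposition \ref{representation}): it realizes $\x$ as the preimage of $X\subseteq F$ under ${\rm Id}\times f/\!\!\sim$, where $f\colon C\to[0,1]$ is the standard binary-collapse map; and the monotonicity of the resulting $\bar f$ still requires a short argument (choosing an epimorphism $\phi\colon\x\to T$ that separates $f(x)$ from $f(y)$ and puts them in different branches when the two segments meet only at the top), not just the observation that epimorphisms preserve the tree order.

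The genuine gap is in (2), at the step ``produce, for each $i$, an epimorphism $\phi_i\colon\lel\to Y_N$ inducing such an approximation of $f_i$.'' Remark \ref{coveri} only gives an epimorphism onto \emph{some} $B\in\f$ refining the pulled-back cover; to apply (L3) you need two epimorphisms onto the \emph{same} structure, each compatible with the corresponding $f_i$. The paper manufactures these as compositions $\psi_i\circ\phi_i$, where $\psi_i\colon B\to A$ sends $b$ to the node $a$ whose cover-set $U_a$ contains $f_i\pi\phi_i^{-1}(b)$, and the real content of the proof is verifying that each $\psi_i$ is a surjective homomorphism. Surjectivity needs property (C4) of the specially constructed cover of Lemma \ref{cover}, and the homomorphism property (that $R^B(b,b')$ forces $R^A(\psi_i(b),\psi_i(b'))$ in the correct direction) needs (C2), (C3) \emph{and} the hypothesis that $f_i$ is monotone on segments. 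Your sketch never uses monotonicity in part (2) at all, yet without it the statement is false: $f_2\circ h^*$ is always monotone on segments, so it cannot uniformly approximate a non-monotone $f_1$. The difficulty you flag instead (a midpoint convention at doubled points) is not where the work lies.
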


In order to prove Theorem \ref{uph}, we will represent every smooth fan as a quotient of an inverse limit of elements in $\f$ and apply the following proposition by Irwin and Solecki.

\begin{proposition}[\cite{IS}, Proposition 2.6]\label{refine}
Let $\mathcal{G}$ be a projective Fra\"{i}ss\'{e} family of finite topological $\mathcal{L}$-structures and let $\mathbb{G}$ be 
its projective Fra\"{i}ss\'{e}  limit. Let $X$ be a topological $\mathcal{L}$-structure such that any open cover of $X$ is 
refined by an epimorphism onto a structure in $\mathcal{G}$. Then there is an epimorphism from $\mathbb{G}$ onto $X$.
\end{proposition}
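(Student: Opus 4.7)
The plan is to realize $X$ as an inverse limit of structures from $\mathcal{G}$ and then use the extension property of $\mathbb{G}$ (Proposition~\ref{fraisse}) to build a compatible sequence of epimorphisms that assembles into the desired epimorphism $\phi\colon \mathbb{G}\to X$.

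For the first step, since $X$ is compact Hausdorff second-countable it is metrizable; fix a compatible metric and a sequence of finite open covers $\mathcal{U}_n$ of $X$ with mesh tending to $0$. Construct epimorphisms $\psi_n\colon X\to A_n$ with $A_n\in\mathcal{G}$ recursively: take $\psi_1$ given by the hypothesis applied to $\mathcal{U}_1$; for $n\geq 2$, apply the hypothesis to the common refinement of $\mathcal{U}_n$ with the clopen partition of $X$ by the fibers of $\psi_{n-1}$. Then every fiber of $\psi_n$ lies inside a unique fiber of $\psi_{n-1}$, so there is a well-defined surjection $\alpha_{n-1}\colon A_n\to A_{n-1}$ with $\psi_{n-1}=\alpha_{n-1}\circ\psi_n$. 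A short diagram chase using that $\psi_n$ and $\psi_{n-1}$ are epimorphisms shows that $\alpha_{n-1}$ preserves the functions and lifts the relations, hence is itself an epimorphism. The canonical map $X\to\varprojlim(A_n,\alpha_n)$ is continuous and bijective (injective since the mesh of $\mathcal{U}_n$ tends to $0$, surjective by compactness), and the same epimorphism check, together with closedness of the relations $R^X$, shows it is an isomorphism of topological $\mathcal{L}$-structures.

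For the second step, define $\phi_n\colon\mathbb{G}\to A_n$ by induction. Start with any $\phi_1$ supplied by (L1). Given $\phi_n$, apply Proposition~\ref{fraisse} to the epimorphisms $\alpha_n\colon A_{n+1}\to A_n$ and $\phi_n\colon\mathbb{G}\to A_n$ to obtain $\phi_{n+1}\colon\mathbb{G}\to A_{n+1}$ with $\alpha_n\circ\phi_{n+1}=\phi_n$. Setting $\phi(x)=(\phi_n(x))_n\in\varprojlim A_n=X$ yields a continuous map; surjectivity follows from a finite-intersection argument using that the fibers $\phi_n^{-1}(a_n)$ are compact and decreasing along the inverse system. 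Preservation of functions by $\phi$ is inherited coordinatewise from the $\phi_n$.

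The last nontrivial point is the relation-lifting clause for $\phi$. Given $R^X(x_1,\dots,x_k)$ with $x_i=(a_i^n)_n$, use that each $\phi_n$ is an epimorphism to pick $y_i^n\in\mathbb{G}$ with $\phi_n(y_i^n)=a_i^n$ and $R^{\mathbb{G}}(y_1^n,\dots,y_k^n)$; by compactness of $\mathbb{G}^k$ extract a convergent subsequence $y_i^{n_j}\to y_i$. Closedness of $R^{\mathbb{G}}$ yields $R^{\mathbb{G}}(y_1,\dots,y_k)$, and for each fixed $m$ the identity $\phi_m(y_i^n)=\alpha_m\circ\cdots\circ\alpha_{n-1}(a_i^n)=a_i^m$ (valid once $n\geq m$) combined with continuity of $\phi_m$ gives $\phi_m(y_i)=a_i^m$, so $\phi(y_i)=x_i$. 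I expect the main obstacle to be the bookkeeping in the first step: verifying that the connecting maps $\alpha_n$ genuinely satisfy the relation-lifting clause (not merely preserve relations) and that the natural map $X\to\varprojlim A_n$ is really an isomorphism of $\mathcal{L}$-structures, since both require the full strength of the epimorphism definition together with the fact that each $R^X$ is closed.
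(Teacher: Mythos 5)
Your argument is correct. The paper does not prove this proposition --- it is quoted directly from Irwin--Solecki \cite{IS} --- but your two-step construction (writing $X$ as an inverse limit of structures in $\mathcal{G}$ by successively refining epimorphic covers so that the connecting maps are epimorphisms, then lifting the inverse system through $\mathbb{G}$ via the extension property and verifying the relation-lifting clause for $\phi$ by compactness of $\mathbb{G}$ and closedness of the relations) is essentially the standard proof given there.
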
 

Moreover, we will show that the epimorphism between the limits as in Proposition \ref{refine} induces a continuous surjection monotone on segments between the respective continua.


\begin{lemma}\label{cover}
Let $\epsilon>0$. Let $X$ be a smooth fan with the top $w$. Then there is $A\in\f$ and  an open cover $(U_a)_{a\in A}$ of $X$ 
such that  
\begin{itemize}
\item[(C1)] for each $a\in A$, $\text{diam}(U_a)<\epsilon$, 
\item[(C2)] for each $a,a'\in A$, if $U_a\cap U_{a'}\neq\emptyset$ then $R_S^A(a,a') $,
\item[(C3)] for each $x,y\in X$ with $y\in [w,x]$, if $y\in U_a$ and $x\in U_b$, 
but $\{x,y\}\not\subset U_a\cap U_b$, unless $a=b$,
then $a\leq_A b$, 
\item[(C4)] for every $a\in A$ there is $x\in X$ such that $x\in U_a\setminus(\bigcup \{U_{a'}\colon a'\in A, a'\neq a\}).$
\end{itemize}
\end{lemma}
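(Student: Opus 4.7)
The plan is to realize $X$ as a subcontinuum of the Cantor fan $F=C\times[0,1]/\s$ (which is possible since $X$ is a smooth fan) and build the cover by combining a small clopen partition of $C$ with a height-level decomposition carried out separately on each chain. The tree $A\in\f$ will consist of a root $r$ together with $m$ chains $b_i$ of common length $N$, where $m$ is the number of relevant pieces of the partition and $N$ is chosen later; the root will cover a neighbourhood of $w$, and each chain will cover the part of $X$ whose $\pi_1$-coordinate lies in the corresponding piece.

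Identify $X\subseteq F$ and assume $w=v$. Define $T_X\colon C\to[0,1]$ by $T_X(c)=\sup\{s\in[0,1]:(c,s)\in X\}$. Since $X\cap\pi_2^{-1}([\alpha,1])$ is compact for each $\alpha>0$, the set $\{T_X\geq\alpha\}=\pi_1(X\cap\pi_2^{-1}([\alpha,1]))$ is closed in $C$, so $T_X$ is upper semicontinuous and attains its maximum on every compact subset of $C$. Fix a clopen partition of $C$ into pieces of diameter less than $\epsilon/3$, discard those on which $T_X\equiv 0$, and denote the remaining pieces by $\tilde Q_1,\ldots,\tilde Q_m$. For each $i$ let $T_i=\max_{c\in\tilde Q_i}T_X(c)>0$, pick $c_i^*\in\tilde Q_i$ attaining the maximum (so $\{c_i^*\}\times[0,T_i]\subseteq X$ by the fan property), set $t_k^i=kT_i/N$ and $\tau=\min_i t_1^i$, and choose $N$ large enough for the diameter estimate below. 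Let $A\in\f$ be the fan with root $r$ and chains $b_i=(r,b_i(1),\ldots,b_i(N))$ for $i=1,\ldots,m$, and define
\[
U_r=\{x\in X:\pi_2(x)<\tau\},\qquad U_{b_i(k)}=\{x\in X:\pi_1(x)\in\tilde Q_i,\ \pi_2(x)\in(t_{k-1}^i,t_{k+1}^i)\}
\]
for $1\leq k<N$, together with $U_{b_i(N)}=\{x\in X:\pi_1(x)\in\tilde Q_i,\ \pi_2(x)>t_{N-1}^i\}$.

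The verification of (C1)--(C4) is then routine. (C1) follows because cells have $\pi_1$-spread less than $\epsilon/3$ and $\pi_2$-spread at most $2/N$, bounding the planar distance by $\epsilon$ for $N$ large. (C2) holds since the $\tilde Q_i$ are pairwise disjoint (forcing same-chain overlaps), the intervals $(t_{k-1}^i,t_{k+1}^i)$ and $(t_{k'-1}^i,t_{k'+1}^i)$ overlap only when $|k-k'|\leq 1$, and $\tau\leq t_1^i$ forces $U_r\cap U_{b_i(k)}\neq\emptyset$ only when $k=1$. (C3) uses that $[w,x]$ lies in a single $F$-branch, so $\pi_1(y)=\pi_1(x)$ whenever $y\in[w,x]$; the chain index $i$ is therefore common to $a$ and $b$, and monotonicity of $\pi_2$ along $[w,x]$ combined with the interval structure forces either $a\leq_A b$ or $\{x,y\}\subseteq U_a\cap U_b$, with the cases involving $r$ being immediate. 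For (C4), the point $(c_i^*,t_k^i)\in X$ lies in $U_{b_i(k)}$ and in no other cell (its $\pi_1$ is only in $\tilde Q_i$ and its height lies in none of the neighbouring intervals), while $w$ lies in $U_r$ and in no $U_{b_i(k)}$.

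The main obstacle is reconciling the rigidity of $\f$ (all chains in any element of $\f$ must have the same length) with the fact that branches of a smooth fan can have arbitrarily different heights. The per-chain rescaling $t_k^i=kT_i/N$ resolves this, and upper semicontinuity of $T_X$ is precisely what guarantees that the rescaling height $T_i$ is attained by a specific arc, providing the concrete witnesses needed for (C4).
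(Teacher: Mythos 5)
Your proof is correct, and while its overall architecture matches the paper's (the fan $A$ is a root together with chains indexed by a small clopen partition of $C$, with the nodes along each chain indexed by a subdivision of the height coordinate), you handle the crucial step for a general smooth fan quite differently. The paper first builds the cover for the Cantor fan $F$ itself, using a \emph{single} chain of intervals $O_1,\ldots,O_n$ covering $[0,1]$ uniformly for all branches, and then passes to $X\subseteq F$ simply by intersecting, remarking only that one ``can further arrange that all branches of $A$ have the same length.'' That last sentence hides exactly the difficulty you isolate: after intersecting with $X$, cells can become empty or lose their private points, since different clopen pieces of $C$ support parts of $X$ of very different heights, and pruning empty cells destroys the equal-branch-length requirement of $\f$. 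Your per-piece rescaling $t^i_k=kT_i/N$, with $T_i=\max_{\tilde Q_i}T_X$ attained at a concrete $c_i^*$ via upper semicontinuity of $T_X$, resolves this directly: every chain automatically has length $N$, and the arc $\{c_i^*\}\times[0,T_i]\subseteq X$ supplies the witnesses for (C4) at every level, including the top node $b_i(N)$. The trade-off is that your argument is longer but self-contained and verifiable line by line, whereas the paper's is shorter but leaves the adaptation to general $X$ to the reader; your case analysis for (C2)--(C4) (in particular the observation that $\tau\le t^i_1$ forces $U_r$ to meet only the level-one cells, and that $[w,x]$ stays in one $\pi_1$-fiber so (C3) reduces to interval combinatorics on a single chain) checks out.
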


\begin{remark}
 Note that (C3) implies that  $w\in U_{r_A}$, where $r_A$ is the root of $A$, and that if $a,a'\in A$ satisfy $R_S^A(a,a')$ then $U_a\cap U_{a'}\neq\emptyset$.
\end{remark}

\begin{proof}[Proof of Lemma \ref{cover}]
We first show that the lemma holds for the Cantor fan $F$.
 Let $\{O_1,O_2,\ldots, O_n\}$ be an open $\frac{\epsilon}{2}$-cover of the unit interval $I=[0,1]$ such that for every $i,j$,  $O_i\cap O_j\neq\emptyset$ if and only if $|i-j|\leq 1$ and for  $x\in O_i\setminus O_j$ and $y\in O_j$ with $i<j$ we have $x<y.$ 
Moreover we require $O_i\setminus O_j\neq \emptyset$ whenever $i\neq j.$ 
 Let $\{V_1,V_2,\ldots,V_m\}$ be a clopen $\frac{\epsilon}{2}$-cover of the Cantor set $C$. Then $\{O_i\times V_j\colon i=1,2,\ldots,n, j=1,2,\ldots,m\}$ is an open $\epsilon$-cover of $C\times I.$ Let $O\subseteq F$ be an open neighbourhood of the top $w$  of $F$ of the form $O=\bigcup_{j=1}^m O_1\times V_j/\!\!\sim,$ where $(a,b)\sim (c,d)$ if and only if either $a=c$ and $b=d$ or $b=d=0.$ The desired cover is then $\mathcal{V}=\{O\}\cup \{O_i\times V_j\colon i=2,\ldots,n, j=1,\ldots,m\}$ with $A=\{r\}\cup\{(i,j)\colon i=2,\ldots,n,j=1,\ldots,m\},$ where  $R^A(r,(i,j))$ if and only if $j=2$ and for $(i,j),(i',j')\in \{2\ldots,n\}\times\{1,\ldots,m\},$ $R^A((i,j),(i',j'))$ if and only if $i=i'$ and $0\leq j'-j\leq 1.$

If $X$ is a smooth fan, we think of $X$ as embedded in $F$ and obtain the desired cover as $\{V\cap X\colon V\in \mathcal{V}\},$ and the structure $A$ from the one defined for $F.$ 
We can further arrange that all branches of $A$ have the same length. 
\end{proof}

\begin{proposition}\label{representation}
For every smooth fan $X,$ there exists a topological $\mathcal{L}$-structure $(\mathbb{X},R^{\mathbb{X}})$ such that $R^{\mathbb{X}}_S$ has one and two element equivalence classes and  $X$ is homeomorphic to $\mathbb{X}/R^{\mathbb{X}}_S.$ 
Moreover, 
every finite open cover of $\mathbb{X}$ can be refined by an epimorphism onto a fan in $\f.$
\end{proposition}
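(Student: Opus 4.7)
The plan is to realize $\mathbb{X}$ as an inverse limit of fans in $\f$, built by iterated application of Lemma \ref{cover}. I start by picking a cover $\mathcal{U}_0=\{U_a^0 : a\in A_0\}$ with $A_0\in\f$ of mesh less than $1$; given $(A_n,\mathcal{U}_n)$, I pick $(A_{n+1},\mathcal{U}_{n+1})$ via Lemma \ref{cover} with mesh less than both $2^{-(n+1)}$ and a Lebesgue number for $\mathcal{U}_n$, so that each $U_{a'}^{n+1}$ is contained in some $U_a^n$. The bonding map $f_n\colon A_{n+1}\to A_n$ is defined by $f_n(a')=a$ for such an $a$, with ties broken to respect the tree order using property (C3). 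Then $f_n$ is an $\mathcal{L}$-epimorphism in $\f$: surjectivity follows from the exclusive points of (C4) (when the mesh is small enough, the cover piece containing the exclusive point of $a$ is forced to map to $a$), while $R$-preservation follows by combining (C2) (overlap implies $R_S$) with (C3) (order is preserved under coarsening).

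Set $\mathbb{X}=\varprojlim(A_n,f_n)$ with $R^{\mathbb{X}}((a_n),(b_n))$ iff $R^{A_n}(a_n,b_n)$ for all $n$. This is a topological $\mathcal{L}$-structure. Define $\pi\colon\mathbb{X}\to X$ by letting $\pi((a_n))$ be the unique point of $\bigcap_n\overline{U_{a_n}^n}$; this is a singleton by the mesh condition and the nesting $\overline{U_{a_{n+1}}^{n+1}}\subseteq\overline{U_{a_n}^n}$. The map $\pi$ is a continuous surjection, and I claim $\pi(x)=\pi(y)$ iff $R^{\mathbb{X}}_S(x,y)$. The direction $(\Leftarrow)$ is immediate since $R^{A_n}_S(a_n,b_n)$ implies $\overline{U_{a_n}^n}\cup\overline{U_{b_n}^n}$ has diameter at most $2\cdot 2^{-n}$. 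For $(\Rightarrow)$, if $p=\pi(x)=\pi(y)$ then $p\in\overline{U_{a_n}^n}\cap\overline{U_{b_n}^n}$ for each $n$; the concrete covers supplied by Lemma \ref{cover} are regular enough (they are built from overlapping intervals on a Cantor-fan grid) that this forces $U_{a_n}^n\cap U_{b_n}^n\neq\emptyset$, and then (C2) yields $R^{A_n}_S(a_n,b_n)$.

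That $R^{\mathbb{X}}_S$-classes have at most two elements reduces to $|\pi^{-1}(p)|\leq 2$: in any fan $A\in\f$ three distinct pairwise $R^A_S$-related elements cannot coexist (a short tree argument, since any such triple would force a pair of siblings to be $R_S$-related), so at each level at most two nodes of $A_n$ have $p$ in the closure of their cover piece; coherently chosen bonding maps then leave at most two inverse-limit threads above $p$. For the refinement property, given any finite open cover $\mathcal{V}$ of $\mathbb{X}$, compactness together with the basic clopen neighborhoods $(f^\infty_n)^{-1}(a)$ yields an index $n$ for which $\{(f^\infty_n)^{-1}(a):a\in A_n\}$ refines $\mathcal{V}$, so $f^\infty_n\colon\mathbb{X}\to A_n$ is the desired epimorphism onto a fan in $\f$. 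The main technical obstacle is the bookkeeping needed to ensure both that each $f_n$ is a genuine epimorphism in $\f$ and that the bonding maps restrict to bijections between the sets of cover pieces containing $p$ at successive levels; both issues are settled by carefully exploiting properties (C2), (C3), and (C4) of Lemma \ref{cover} and choosing the $\epsilon_n$ sufficiently small.
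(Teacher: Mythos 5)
Your route is genuinely different from the paper's, and it has a gap at its central step. The paper does not build $\mathbb{X}$ as an inverse limit of the covers from Lemma \ref{cover}: it views $X\subseteq F$ and takes $\mathbb{X}$ to be the preimage of $X$ under the explicit map $\mathrm{Id}\times f/\!\!\sim\;\colon C\times C/\!\!\sim\;\to F$, where $f\colon C\to[0,1]$ is the standard ternary-to-binary surjection, with $R^{\mathbb{X}}$ relating the two preimages of a point (the endpoints of a removed interval in the same vertical fibre). With that choice, the statements that $R^{\mathbb{X}}_S$ has classes of size one and two and that $\mathbb{X}/R^{\mathbb{X}}_S\cong X$ are immediate, and only the ``moreover'' part needs an argument (Claim \ref{cover2}, proved like Lemma \ref{cover}). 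Your construction instead has to \emph{prove} these two facts, and that is exactly where the proposal breaks down.

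Concretely, your direction $(\Rightarrow)$ --- that $\pi(x)=\pi(y)$ forces $R^{A_n}_S(a_n,b_n)$ at every level --- does not follow from Lemma \ref{cover}. The covers there are built from open intervals $O_1,\dots,O_n$ with $O_i\cap O_j\neq\emptyset$ iff $|i-j|\leq 1$, but nothing prevents $\overline{O_i}\cap\overline{O_{i+2}}\neq\emptyset$ (e.g.\ $O_1=(0,0.3)$, $O_3=(0.3,0.7)$ is consistent with all the stated conditions). So a point $p$ can lie in the closures of two cover pieces that do not meet, and then two threads over $p$ need not be $R_S$-related; worse, three pieces can have $p$ in their closure, so your claim that ``at each level at most two nodes of $A_n$ have $p$ in the closure of their cover piece'' is false as stated, and the bound $|\pi^{-1}(p)|\leq 2$ together with the identification of $\ker\pi$ with the equivalence relation generated by $R^{\mathbb{X}}_S$ is left unproved. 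Since $X\cong\mathbb{X}/R^{\mathbb{X}}_S$ is the whole content of the first assertion of the proposition, this is not bookkeeping but the main point. The gap is repairable --- one can strengthen Lemma \ref{cover} to require disjoint closures of non-adjacent pieces and then control, level by level, how the bonding maps act on the (at most two) pieces whose closures contain $p$, and one must also verify, as the paper does for the maps $\psi_i$ in the proof of Theorem \ref{uph}(2), that the tie-breaking in the definition of $f_n$ really yields a homomorphism --- but none of this is carried out. By contrast, your treatment of the ``moreover'' clause via the projections $f^{\infty}_n$ of the inverse limit is fine and arguably cleaner than the paper's appeal to Claim \ref{cover2}.
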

\begin{proof}
Let $X$ be a smooth fan viewed as a subfan of the Cantor fan $F$. 
While proving Proposition  \ref{embed} we already described how to obtain the Cantor fan as a quotient  of a topological $\mathcal{L}$-structure.

Let $C$ be the Cantor set viewed as the middle third Cantor set.
Each point of $C$ can be expanded in a ternary sequence $0.a_1a_2a_3\ldots$, where for each $i$, $a_i\in\{0,2\}$.
Similarly, each point of $[0,1]$ can be expanded in a binary  sequence 
$0.a'_1a'_2a'_3\ldots$, where for each $i$, $a'_i\in\{0,1\}$.
Let $f\colon C\to [0,1]$ be given by $f(0.a_1a_2a_3\ldots)=0.a'_1a'_2a'_3\ldots$,
where $a'_n=0$, when $a_n=0$, and $a'_n=1$, when $a_n=2$.
Consider $\text{Id}\times f/\!\!\sim \colon C\times C/\!\!\!\sim\,\,\to F$, where 
$(a,b)\sim (c,d)$ if and only if $a=c$ and $b=d$, or $b=d=0$.

Let $\x=(\text{Id}\times f/\!\!\sim)^{-1}(X)$. Set 
$R^\x((a,b),(c,d)) $ if and only if $a=c$ and $b=d$, or $a=c$ and $(b,d)$ is an interval removed from $[0,1]$ in the construction of $C$.
Then $\x=(\x,R^\x)$ is a topological $\mathcal{L}$-structure. Observe that $\x/R_S^\x=X$. 

{To prove the ``moreover'' part, observe that the following claim is true and it can be proved analogously to Lemma \ref{cover}.
\begin{claim}\label{cover2}
For every $\epsilon>0$ there exist $A\in\f$ and an epimorphism $\phi\colon \mathbb{X}\to A$ such that for each $a\in A$, $\text{diam}(\phi^{-1}(a))<\epsilon$,
where the diameter is taken with respect to some fixed compatible metric on $\mathbb{X}$.
\end{claim}
Now, if we have an open cover of $\mathbb{X}$, since $\mathbb{X}$ is compact, by the Lebesgue covering lemma we can find an $\epsilon>0$ such that 
the epimorphism guaranteed by Claim \ref{cover2} is as required.}
\end{proof}

\begin{proof}[Proof of Theorem \ref{uph}]
(1) 
Let $X$ be a smooth fan and let $\x$ be as in Proposition \ref{representation}. 
By Proposition \ref{refine}, there is an epimorphism  $f\colon\lel\to \x$.
This epimorphism induces a continuous surjection  $\bar{f}$ from $L=\lel/R_S^{\lel}$ onto $X=\x/R_S^\x$. It remains to show that $\bar{f}$ is monotone on segments. 
Let $\pi\colon\lel\to L$  be the quotient map. 
Clearly, $\bar{f}(v)=w,$ where $v$ and $w$ are the tops of $L$ and $X$ respectively.
Let  $x,y\in \lel$ be such that 
$\pi(x)\in [v,\pi(y)].$ 
We  show that $\bar{f}(\pi(x))\in [w,\bar{f}(\pi(y))].$   Let $T\in \f$ and let $\phi\colon\x\to T$ be an epimorphism 
that separates $f(x)$ and $f(y)$ and such that if $[w,\bar{f}(\pi(x))]\cap [w,\bar{f}(\pi(y))]=\{w\},$
then $\phi\circ f(x)$ and $\phi\circ f(y)$ are in different branches of $T$.
Since $\pi(x)\in [v,\pi(y)]$  and 
$\phi\circ f$ is an epimorphism, we have 
$\phi\circ f(x)\leq_T \phi\circ f(y).$ 
 Now, since $\phi$ is an epimorphism,
 we conclude that
 $\bar{f}(\pi(x))\in[w,\bar{f}(\pi(y))].$

(2)
Take 
$A\in \f$ and an open $\epsilon$-cover $\{U_a\}_{a\in A}$ of $X$ 
as in  Lemma \ref{cover}. 
Using the Lebesgue covering lemma, find $\delta$  such that for every  $M\subseteq X$ with $\text{diam}(M)<\delta$ there exists $a\in  A$ such that $M\subseteq U_a.$ 
Since $f_1\circ \pi$ and $f_2\circ \pi$ are uniformly continuous on $\lel$, there is $B\in \f$
and  epimorphisms $\phi_i\colon \lel\to B$, $i=1,2$ such that for $b\in B$,
$ \text{diam}(f_i\pi\phi_i^{-1}(b))<\delta$. Let $A$ and $\psi_i\colon B\to A$ be defined as follows:
$\psi_i(b) =a$ if and only if $f_i\pi  \phi_i^{-1}(b)\subseteq U_a$ and whenever 
 $f_i\pi  \phi_i^{-1}(b)\subseteq U_{a'}$, then $R^A(a',a)$.
 
 We show  that $\psi_i$ is an epimorphism for $i=1,2.$ Firstly, $\psi_i$ is onto. That follows from the fact that $\{U_a\colon a\in A\}$  and $\{f_i\pi\phi_i^{-1}(b)\colon b\in B_i\}$ are covers of $X$, and  from (C4).

 Secondly, let $b,b'\in B$ be such that $R^{B}(b,b')$. Since $\phi_i$ is an epimorphism, $\pi\phi_i^{-1}(b)\cap\pi\phi_i^{-1}(b')\neq\emptyset,$ and consequently
  $f_i\pi\phi_i^{-1}(b)\cap f_i\pi\phi_i^{-1}(b')\neq\emptyset$, and therefore  $U_{\psi_i(b)}\cap U_{\psi_i(b')}\neq\emptyset$. 
By (C2),   $R^A(\psi_i(b),\psi_i(b'))$  or
 $R^A(\psi_i(b'),\psi_i(b))$. We will show that only the former is possible whenever $\psi_i(b)\neq\psi_i(b').$ 
 
 
 
 Suppose on the contrary 
 that $R^A(\psi_i(b'),\psi_i(b)).$ 
By the definition of $\psi_i$, there exists 
 $x_i\in f_i\pi\phi_i^{-1}(b')\setminus U_{\psi_i(b)}\subseteq U_{\psi_i(b')}\setminus U_{\psi_i(b)}.$ Let $s_i,s_i'\in\mathbb{L}$ be such that
 $s_i\in[r,s'_i]$, where $r$ is the top of $\lel$,
$\phi_i(s_i)=b,$ $\phi_i(s_i')=b'$ and $f_i\pi(s_i')=x_i.$ It follows that $\pi(s_i)\in[v,\pi(s_i')],$ and since $f_i$ is monotone on segments, also $f_i \pi(s_i)\in[w,f_i \pi(s_i')=x_i].$ This however contradicts  (C3) as $f_i \pi(s_i)\in U_{\psi_i(b)}$ and $f_i \pi(s_i')\in U_{\psi_i(b')}$.
 
 

 We proved that $\psi_i$'s are surjective homomorphisms. By Remark \ref{epi}, they are automatically epimorphisms.
 
 
 

Finally, by (L3), there exists $h\in\aut$ such that $\psi_1\circ\phi_1=\psi_2\circ\phi_2\circ h$.
This gives that for each $y\in \lel$, there is $a\in A$ such that $f_1\circ\pi(y), f_2\circ \pi\circ h(y)\in U_a$.
Hence for all $x\in L$, $d(f_1(x),f_2\circ h^*(x))<\epsilon$.

\end{proof}

\begin{corollary}\label{dense}\label{dense}
The group $\aut$ is dense in $H(L)$.
\end{corollary}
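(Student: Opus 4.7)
The plan is to derive the density directly from Theorem \ref{uph}(2), applied with $X=L$ itself. Recall that the compact-open topology on $H(L)$ coincides with the topology of uniform convergence with respect to any compatible metric on $L$, so a basic neighbourhood of a given $g\in H(L)$ is specified by an $\epsilon>0$ and consists of those $k\in H(L)$ with $\sup_{x\in L} d(g(x),k(x))<\epsilon$. It therefore suffices, given $g\in H(L)$ and $\epsilon>0$, to exhibit $h\in\aut$ with $d(g(x),h^*(x))<\epsilon$ for every $x\in L$.

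First I would check that any $g\in H(L)$ is monotone on segments in the sense of Subsection \ref{pu}. The top $v$ of the Lelek fan is its unique ramification point, so $g(v)=v$. Moreover, $g$ maps the arc $[v,y]$ homeomorphically to an arc with endpoints $v=g(v)$ and $g(y)$; since $L$ is hereditarily unicoherent, this image is exactly $[v,g(y)]$. Hence $x\in[v,y]$ implies $g(x)\in[v,g(y)]$, which is precisely the monotonicity on segments required. The identity $\operatorname{Id}_L$ is trivially a continuous surjection monotone on segments as well.

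Now, since $L$ is a smooth fan (it is a subcontinuum of the Cantor fan by construction, see Proposition \ref{embed}), Theorem \ref{uph}(2) applies with $X=L$, $f_1=g$ and $f_2=\operatorname{Id}_L$. It yields $h\in\aut$ such that for every $x\in L$,
\[
d\bigl(g(x),\, \operatorname{Id}_L\circ h^*(x)\bigr)=d\bigl(g(x),h^*(x)\bigr)<\epsilon,
\]
which is exactly the approximation required. As $g\in H(L)$ and $\epsilon>0$ were arbitrary, $\aut$ (identified with $\{h^*:h\in\aut\}$ as in Remark \ref{topo}(1)) is dense in $H(L)$.

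The only nontrivial point is the verification that every homeomorphism of $L$ is monotone on segments, which reduces to the uniqueness of the top as a ramification point; once this is noted, the statement is an immediate corollary of the ultrahomogeneity part of Theorem \ref{uph}.
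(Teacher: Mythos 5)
Your proof is correct and is essentially the paper's proof: the paper also deduces the corollary by applying Theorem \ref{uph}(2) with $X=L$, $f_1$ an arbitrary element of $H(L)$, and $f_2=\mathrm{Id}$. Your explicit verification that every homeomorphism of $L$ fixes the top and is monotone on segments is a detail the paper leaves implicit, but it is the right justification for why the theorem applies.
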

\begin{proof}
In (2) of Theorem \ref{uph} take $X=L$, an arbitrary $f_1\in H(L)$, and $f_2={\rm Id}$.
\end{proof}

A metric space $X$ is {\em uniformly pathwise connected } if there exists a family $P$ of paths in $X$ such that 

\noindent (1) for $x,y\in X$ there is a path in $P$ joining $x$ and $y$, and 

\noindent (2) for every $\epsilon >0$ there is a positive integer $n$ such that each path in $P$ can be partitioned into $n$ pieces of diameter at most $\epsilon$. 

Kuperberg \cite{K} showed that continuous images of the Cantor fan  are precisely uniformly pathwise connected continua.

Since the Lelek fan is  clearly uniformly pathwise connected, it is 
a continuous image of the Cantor fan, and since
the Cantor fan is a continuous image of the Lelek fan (by the first part of Theorem \ref{uph}), we obtain the following corollary.

\begin{corollary}
Continuous images of the Lelek fan are  precisely uniformly pathwise connected continua.
\end{corollary}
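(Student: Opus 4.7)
The plan is to invoke Kuperberg's theorem as stated: continuous images of the Cantor fan $F$ are exactly the uniformly pathwise connected continua. The corollary then reduces to establishing that the Lelek fan $L$ and the Cantor fan $F$ are mutually continuous images of each other, at which point ``continuous image of $L$'' and ``continuous image of $F$'' describe the same class.

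For the easy direction, suppose $X$ is a continuous image of $L$. I would first verify that $L$ itself is uniformly pathwise connected: take as $P$ the family of paths of the form $[v,e_1]\cup[v,e_2]$, where $v$ is the top of $L$ and $e_1,e_2$ are endpoints; any two points in $L$ lie on such a union of two line segments, and since $L$ is a subcontinuum of the Cantor fan, all segments $[v,e]$ have length at most $1$, so each such path can be uniformly subdivided into at most $\lceil 2/\epsilon\rceil$ pieces of diameter $<\epsilon$. By Kuperberg, $L$ is a continuous image of $F$, and composing with the given surjection $L\to X$ shows $X$ is also a continuous image of $F$, hence uniformly pathwise connected by Kuperberg again.

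For the other direction, suppose $X$ is uniformly pathwise connected. Then Kuperberg yields a continuous surjection $F\to X$. Now $F$ is a smooth fan, so Theorem~\ref{uph}(1) provides a continuous surjection $L\to F$ (monotone on segments). Composing gives a continuous surjection $L\to X$, as required.

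There is no real obstacle: the content is packaged in Kuperberg's result and in Theorem~\ref{uph}(1), and the only small verification is the uniform pathwise connectedness of $L$ itself, which is immediate from the fact that $L$ embeds into $F$ and each segment from the top to an endpoint is an arc of length at most $1$ that can be divided into uniformly many short subarcs.
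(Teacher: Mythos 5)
Your proposal is correct and follows essentially the same route as the paper: the paper likewise observes that $L$ is uniformly pathwise connected (hence a continuous image of $F$ by Kuperberg's theorem) and that $F$ is a continuous image of $L$ by Theorem \ref{uph}(1), so the two classes of continuous images coincide. The only cosmetic point is that your family $P$ should consist of paths $[x,v]\cup[v,y]$ for arbitrary $x,y\in L$ (or you should note that subpaths of paths in $P$ inherit the uniform subdivision property), but this does not affect the argument.
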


\section{The homeomorphism group of the Lelek fan}

\subsection{Connectivity properties of $\mathbf{H(L)}$}

We show that $H(L)$ -- the homeomorphism group of the Lelek fan $L$ -- is totally disconnected (Proposition \ref{discon})
and is generated by every neighbourhood of the identity (Corollary \ref{epsgen_c}). 
A topological space $X$ is {\em totally disconnected } if for any $x,y\in X$ there is a clopen set $U\subseteq X$ such that
$x\in U$ and $y\in (X\setminus U)$. Note that this implies that every subspace of $X$ containing more than one element is  not connected
(the latter property is in literature  often used as a definition of being totally disconnected).

 We say that a metric group $(G,d)$ is {\em generated by every neighbourhood of the identity} if
for every $\epsilon>0$ and $h\in G$, there are homeomorphisms 
$ f_1,\ldots, f_n\in G$ such that for every $i$, $d(f_i,\text{Id})<\epsilon$, 
and $h=f_1\circ\ldots \circ f_n$.
The definition of being generated by every neighbourhood of the identity naturally extends to topological groups,
however, we will only need it in the context of metric groups.
Lewis  \cite{Le} showed that the homeomorphism group of the pseudo-arc is generated by every neighbourhood of the identity. However, it is not known whether that group is totally disconnected (see \cite{Le2}, Question 6.14).
There are examples of totally disconnected {\em Polish groups }
(separable and completely metrizable topological groups)
that are generated by every neighbourhood of the identity.
The first such example, solving Problem 160 in the Scottish Book (see \cite{M}), posed by  Mazur, asking whether a complete metric group that is generated by every neighbourhood of the identity must be connected, was given by Stevens \cite{S}; another example was presented by
 Hjorth \cite{H}. 
Groups constructed by Stevens and Hjorth are algebraically subgroups of the additive group of real numbers.
Our example is 
different from the two above. The group $H(L)$ is non-abelian, which is because $H(L)$ is non-trivial  (Remark \ref{topo}(2)) and 
it has non-trivial conjugacy classes (Theorem \ref{xyz}).
and it  is explicitly given as a homeomorphism group of a continuum.
Lewis-Zhou (\cite{LZ}, Question 5) asked whether every homeomorphism group of a continuum
that is generated by every neighbourhood of the identity has to be connected. Our example shows that the answer is negative.

As in Subsection \ref{notation}, let $C$ be the Cantor set,
$F$ the Cantor fan with the top point $v$,
 and let $\pi_1\colon F\setminus\{v\}\to C$ be the projection.

\begin{proposition}\label{discon}
The group $H(L)$ is totally disconnected.
\end{proposition}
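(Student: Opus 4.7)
The plan is to exploit two properties of $L$ highlighted in Subsection~\ref{notation}: the endpoint set $E$ is dense in $L$, and, being homeomorphic to the complete Erd\H{o}s space, $E$ is totally disconnected. The key observation is that every homeomorphism of $L$ permutes $E$---endpoint-ness is a purely arc-theoretic notion---so evaluation at a fixed endpoint yields a continuous map from $H(L)$ into the totally disconnected space $E$.

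Given distinct $f,g\in H(L)$, I will first locate an endpoint where they disagree. The coincidence set $\{x\in L:f(x)=g(x)\}$ is the preimage of the diagonal under the continuous map $x\mapsto(f(x),g(x))$, hence closed in $L$; since $f\neq g$ it is proper, so its complement is a nonempty open subset of $L$. Density of $E$ then supplies $e\in E$ with $f(e)\neq g(e)$.

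Next I will consider the evaluation $\mathrm{ev}_e\colon H(L)\to L$, $h\mapsto h(e)$, which is continuous in the compact-open topology (the subbasic sets $\{h:h(e)\in U\}$ for $U\subseteq L$ open are exactly $\mathrm{ev}_e^{-1}(U)$). Because each $h\in H(L)$ maps $E$ to $E$, the image of $\mathrm{ev}_e$ lies in $E$, so $\mathrm{ev}_e$ factors as a continuous map $H(L)\to E$ when $E$ is equipped with the subspace topology. Total disconnectedness of $E$ then produces a clopen $V\subseteq E$ with $f(e)\in V$ and $g(e)\notin V$; pulling back, $\mathrm{ev}_e^{-1}(V)$ is a clopen neighbourhood of $f$ in $H(L)$ missing $g$, which is exactly what is needed.

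I do not expect any serious obstacle here: every step is a short continuity or density argument once the role of $E$ is isolated. The only nontrivial input is the total disconnectedness of $E$, which is already recorded in Subsection~\ref{notation} via the identification of $E$ with the complete Erd\H{o}s space.
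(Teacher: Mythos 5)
Your proof is correct and follows essentially the same route as the paper: find an endpoint $e$ where the two homeomorphisms disagree (using density of $E$) and pull back a clopen subset of $E$ under the continuous evaluation map $h\mapsto h(e)$. The only cosmetic difference is that the paper produces the separating clopen set in $E$ explicitly as $\pi_1^{-1}(U_0)\cap E$ for a clopen $U_0$ in the Cantor set, whereas you invoke total disconnectedness of $E$ via its identification with the complete Erd\H{o}s space.
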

\begin{proof}
Let $h_1\neq h_2\in H(L)$. We show that there is a clopen set $A$ in $H(L)$ such that $h_1\in A$ and $h_2\notin A$.
Since the set of endpoints $E$ is dense in $L$ and $h_1\neq h_2$, there is $e\in E$  such that $h_1(e)\neq h_2(e).$
Let $U_0$ be a clopen set in $C$ such that $\pi_1(h_1(e))\in U_0$ and $\pi_1(h_2(e))\notin U_0$ and
 let $U=\pi^{-1}(U_0)\cap E$. Then $U$ is a clopen set in $E$.
 Since $H(L)\to E, h\mapsto h(e)$ is continuous, $A=\{h\in H(L)\colon h(e)\in U\}$ is a clopen set in $H(L)$ such that $h_1\in A$ and $h_2\notin A$.
\end{proof}

Fix a compatible metric $d$ on $L$. Denote the corresponding supremum metric on $H(L)$ by $d_{\sup}$.
A homeomorphism $h\in H(L)$ is called an $\epsilon$-homeomorphism if  $d_{\sup}(h,\text{Id})<\epsilon$.

\begin{theorem}\label{epsgen}
For every $\epsilon>0$ and $h\in \aut$ there exist
$ f_1,\ldots, f_n\in \aut$ 
such that  $h=f_1\circ\ldots \circ f_n$ and $f^*_0,\ldots,  f^*_n$ are $\epsilon$-homeomorphisms.
\end{theorem}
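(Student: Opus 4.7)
The plan is to use projective ultrahomogeneity (L3) together with the extension property (Proposition \ref{fraisse}) to factor $h$ in $\aut$ through a sequence of automorphisms with ``small support''.

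First, I would fix a fine enough projection: using Remark \ref{coveri} combined with Lemma \ref{cover} pulled back along $\pi$, choose an epimorphism $\phi\colon\lel\to A$ with $A\in\f$ satisfying $\text{diam}(\pi(\phi^{-1}(a)))<\epsilon/2$ for every $a\in A$ and $\text{diam}(\pi(\phi^{-1}(a))\cup\pi(\phi^{-1}(a')))<\epsilon$ whenever $R_S^A(a,a')$. These conditions ensure that any $g\in\aut$ which either lies in the open subgroup $N_\phi=\{g:\phi\circ g=\phi\}$, or merely permutes $\phi$-cells among $R_S^A$-neighbors, induces an $\epsilon$-homeomorphism $g^*$.

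Next, I would reduce to a finite combinatorial problem. Since $\phi$ and $\phi\circ h$ are both epimorphisms $\lel\to A$, refining their common clopen refinement via Remark \ref{coveri} produces an epimorphism $\psi\colon\lel\to B$ with $B\in\f$ and epimorphisms $\alpha,\beta\colon B\to A$ such that $\phi=\alpha\circ\psi$ and $\phi\circ h=\beta\circ\psi$. I would then decompose the change $\alpha\rightsquigarrow\beta$ as a sequence $\alpha=\gamma_0,\gamma_1,\dots,\gamma_m=\beta\colon B\to A$ of epimorphisms in which consecutive $\gamma_{i-1},\gamma_i$ differ only on a single clopen subset $\psi^{-1}(V_i)$ of $\lel$, and there by a shift to an $R_S^A$-adjacent cell of $A$. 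Each such elementary swap is lifted to $\aut$ by amalgamating the identity outside $\psi^{-1}(V_i)$ with an implementing local change on $\psi^{-1}(V_i)$, producing $\sigma_i\in\aut$ supported on $\psi^{-1}(V_i)$ satisfying $\gamma_i\circ\psi=\gamma_{i-1}\circ\psi\circ\sigma_i$; by the choice of $\phi$, the homeomorphism $\sigma_i^*$ is $\epsilon$-small. Setting $\sigma=\sigma_m\circ\cdots\circ\sigma_1$, we get $\phi\circ\sigma=\phi\circ h$, so $\tau:=h\circ\sigma^{-1}\in N_\phi$ also induces an $\epsilon$-homeomorphism, and $h=\tau\circ\sigma_m\circ\cdots\circ\sigma_1$ is the desired factorization.

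The main obstacle is the local lifting of each elementary swap: the extension property as stated in Proposition \ref{fraisse} produces implementing automorphisms but does not a priori control their support. To extract $\sigma_i$ that is the identity outside $\psi^{-1}(V_i)$, I would establish a ``relative'' form of the extension property -- essentially that the restriction of $\lel$ to each clopen set is again a projective Fra\"{i}ss\'{e}-like structure in which amalgamation works with the identity on the complement prescribed as boundary data. This should follow by rerunning the branch-by-branch amalgamation of Proposition \ref{ap} on the subtrees lying inside the relevant clopen set, but it is the technically most involved part of the argument.
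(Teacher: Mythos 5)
Your overall architecture---refine to a fine epimorphism onto some $A\in\f$, reduce $h$ to a pair of epimorphisms $\alpha,\beta\colon B\to A$ over a common $\psi$, and interpolate between $\alpha$ and $\beta$ by a chain of epimorphisms whose consecutive terms differ pointwise by one $R_S^A$-step---is exactly the paper's. But your lifting step has a genuine gap, and not only the one you flag. First, the ``relative extension property'' you need (an automorphism implementing an elementary swap that is the identity outside $\psi^{-1}(V_i)$) is the crux of your argument, not a technicality, and nothing of the sort is proved or used in the paper. Second, even granting such $\sigma_i$, your smallness claim fails as stated: the identity $\gamma_i\circ\psi=\gamma_{i-1}\circ\psi\circ\sigma_i$ only places $x$ and $\sigma_i(x)$ in $R_S^A$-adjacent cells of the epimorphism $\gamma_{i-1}\circ\psi$, whereas your diameter control is on the cells of $\phi=\gamma_0\circ\psi$; for $i\geq 2$ a cell of $\gamma_{i-1}\circ\psi$ is a union of $\psi$-cells that need not have small diameter, so $\sigma_i^*$ need not be an $\epsilon$-homeomorphism. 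Third, you assert without construction that the interpolating chain of \emph{epimorphisms} exists; keeping the intermediate maps surjective is precisely why the paper first pads $T$ so that every branch of $S$ has many preimage branches (its condition $(*)$) before walking each branch's image down to the root and back up to its target.

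The repair is the paper's, and it makes your hardest step unnecessary: do not ask for controlled supports at all. Setting $g_i=\sigma_1\circ\dots\circ\sigma_i$, your own identities give $\phi\circ g_i=\gamma_i\circ\psi$, so $\phi(g_i(x))$ and $\phi(g_{i+1}(x))$ are $R_S^A$-related for every $x$, and hence the telescoping factors $g_{i+1}^{*}\circ(g_i^{*})^{-1}$---conjugates of your $\sigma_{i+1}^*$, not the $\sigma_{i+1}^*$ themselves---are $\epsilon$-homeomorphisms whose product, together with one element of $N_\phi$, is $h^*$. But an automorphism $g_i$ with $\phi\circ g_i=\gamma_i\circ\psi$ is obtained directly with no support condition: the extension property gives $\psi_i\colon\lel\to B$ with $\gamma_i\circ\psi_i=\phi$, and projective ultrahomogeneity gives $g_i\in\aut$ with $\psi_i\circ g_i=\psi$. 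Once you phrase the factorization this way you have reproduced the paper's proof, in which the only remaining content is the explicit branch-by-branch construction of the chain of epimorphisms.
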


\begin{proof}[Proof of Theorem \ref{epsgen}]
Let $\mathcal{B}_0$ be an open cover of $L$ 
that consists of sets of diameter $<\frac{\epsilon}{2}$.
Let $\mathcal{B}=\{\pi^{-1}(B)\colon B\in \mathcal{B}_0\}$ be an open cover of $\lel$, where $\pi\colon \lel\to L$ is the quotient map.
Let $S\in\f$ and $\alpha\colon \lel\to S$ be an epimorphism that refines $\mathcal{B}$.
Note that 
for $s,s'\in S$ 
\[(R^S(s,s')) \to 
(\text{diam}(\pi\circ\alpha^{-1}(s)\cup \pi\circ\alpha^{-1}(s')  )<\epsilon)  \tag{$\triangle$} \label{triangle}
\] 
since $\pi\circ\alpha^{-1}(s)\cap \pi\circ\alpha^{-1}(s') \neq\emptyset $.

Using the uniform continuity of $h$ and  the Lebesgue covering lemma, find a finite open cover $\mathcal{U}$ of $\lel$ refining $\mathcal{C}$
such that $h(\mathcal{U})=\{h(U)\colon U\in \mathcal{U}\}$ also refines $\mathcal{C}$.  Let $T\in \f$  and let $\gamma\colon\lel\to T$ be an epimorphim  refining $\mathcal{U}$. Then also $\mathcal{D}=\{\gamma^{-1}(t)\colon t\in T\}$ 
and $h(\mathcal{D})=\{h\circ \gamma^{-1}(t)\colon t\in T\}$
both refine $\mathcal{C}=\{\alpha^{-1}(s)\colon s\in S\}$. Denote by $\beta$ the surjection from $T$ onto $S$ such that $\alpha=\beta\circ \gamma$. We have that $\beta$ is an epimorphism as $\alpha$ and $\gamma$ are.

Let $\beta_0=\alpha\circ h\circ \gamma^{-1}$ and let $\gamma_0=\gamma\circ h^{-1}$.
Note that $\beta_0$ is an epimorphism and $\alpha=\beta_0\circ \gamma_0$.

 Without loss of generality, we can assume the following property.
\begin{align}
 & \text{For every branch in $S$ there are at least $k+1$ branches in $T$ }\tag{*} \\ & \text{that map onto the given branch under $\beta_0 $. }  \notag
\end{align}
If the original fan $T$ does not have this property, we take $T'$ and $\phi\colon T'\to T$ such that for every branch  $b$ in $T$
there are $k+1$ branches in $T'$ that are mapped by $\phi$ onto $b$. 
We apply the extension property to $\phi$ and $\gamma_0$ and get $\psi\colon\lel\to T'$ such that $\gamma_0=\phi\circ\psi$.
We replace $T$ by $T'$, $\gamma_0$ by $\psi$,  $\beta_0$ by $\beta_0\circ\phi$, $\gamma$ by  $\psi\circ h$,
and $\beta$ by  $\alpha\circ h^{-1}\circ \psi^{-1}$.

It is enough to construct epimorphisms  $\beta_1,\ldots, \beta_n=\beta\colon T\to S$, for some $n$,
such that for every $0\leq i<n$ and for every $t\in T$, $R^S(\beta_i(t),\beta_{i+1}(t))$ or $R^S(\beta_{i+1}(t),\beta_{i}(t))$.
Then using the extension property, we find $\gamma_1,\ldots, \gamma_n=\gamma$ such that $\alpha=\beta_i\circ\gamma_i$,
$i=1,2,\ldots, n$, while
projective ultrahomogeneity then provides us with $h=h_0,h_1,\ldots, h_{n-1},  h_n=\text{Id}\in\aut$ such that $\gamma=\gamma_i\circ h_i$.
For each automorphism $h_i,$ let $h_i^*$ denote the corresponding homeomorphism of $L$,  let
$ f_i=h_{i-1}^*\circ (h_i^*)^{-1}$,  $i=1,2,\ldots, n.$  Clearly, the  composition $f_1\circ\ldots \circ f_n$  
is equal to $h$, and each  $f_i$ is an
$\epsilon$-homeomorphism.
Indeed, for every $x\in\lel$ and $i=0,1,\ldots, n-1$, 
\[R^S(\alpha\circ h_i(x), \alpha\circ h_{i+1}(x)) \text{ or } 
R^S(\alpha\circ h_{i+1}(x),  \alpha\circ h_i(x)),
\]
 since 
 \[\alpha\circ h_i(x)=\beta_i\circ\gamma_i\circ h_i(x)=\beta_i\circ \gamma(x)
 \] 
 and 
 \[R^S(\beta_i(t),\beta_{i+1}(t)) \text{ or } R^S(\beta_{i+1}(t),\beta_{i}(t))
 \]
  for every $t\in T.$  
By (\ref{triangle}),
we get that for every $x\in\lel$,
\[{\rm diam}(\pi\circ\alpha^{-1}(\alpha\circ h_i(x))\cup \pi\circ\alpha^{-1}(\alpha\circ h_{i+1}(x)))<\epsilon,
\]
and therefore,
\[d_\text{sup}(h_i^*, h_{i+1}^* )=d_\text{sup}(h_i^*\circ (h_{i+1}^*)^{-1}, {\rm Id} )<\epsilon.
\] 


 Enumerate all branches in $S$ as $c_1,\ldots, c_k$  and all branches in $T$ as $d_1,\ldots,d_l$ in a way that the property below holds.
\begin{align}
& \text{ For every $1\leq i\leq k$, $\beta\restriction d_i$ is onto $c_i$.} \tag{**}
\end{align} 
Let $\beta_0(d_1)= (c(0), c(1),\ldots, c(m_1))\subseteq c$
 and $\beta(d_1)=(c_1(0),c_1(1),\ldots, c_1(m_2))\\ \subseteq c_1$.
We construct $\beta_1,\beta_2,\ldots,\beta_{n_1}$, for  $n_1=m_1+m_2$.
For $i=1,\ldots, m_1$, let \[
    \beta_i(t)= 
\begin{cases}
    c(m_1-i)  & \text{if }  t\in d_1 \text{ and }\beta_{i-1}(t)=c(m_1-i+1)\\
    \beta_{i-1}(t)              & \text{otherwise}.
\end{cases}
\]
For $i=1,\ldots, m_2$, let \[
    \beta_{m_1+i}(t)= 
\begin{cases}
    c_1(i)  & \text{if }  t\in d_1 \text{ and } \beta(t)\in\{c_1(i),\ldots, c_1(m_2)\}\\
    \beta_{m_1+i-1}(t)              & \text{otherwise}.
\end{cases}
\]
We continue in the same manner for $2,\ldots, l$ and construct  
 $\beta_{n_1+1},\ldots,\beta_{n_2}$,...,    $\beta_{n_{l-1}+1},\ldots,\beta_{n_l}$.
By $(*)$ and $(**)$, 
each $\beta_i$ is onto.
All $\beta_i$'s are  epimorphisms and they satisfy the required condition: for every $0\leq i< n$ and for every $t\in T$, $R^S(\beta_i(t),\beta_{i+1}(t))$ or $R^S(\beta_{i+1}(t),\beta_{i}(t))$.


\end{proof}  

Theorem \ref{epsgen} immediately yields the following corollaries. To obtain Corollary \ref{epsgen_c},
we also use Corollary \ref{dense}, which says that $\aut$ is dense in $H(L)$.

\begin{corollary}\label{epsgen_c}
The group $H(L)$ is generated by every neighbourhood of the identity.
\end{corollary}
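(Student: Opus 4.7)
The plan is to bootstrap Theorem~\ref{epsgen} from the subgroup $\{h^*:h\in\aut\}\subseteq H(L)$ to all of $H(L)$, using the density statement in Corollary~\ref{dense}. Fix $\epsilon>0$ and $h\in H(L)$. The first step is to use density to pick $g\in\{h^*:h\in\aut\}$ with $d_{\sup}(h,g)<\epsilon$. The error term $e:=h\circ g^{-1}\in H(L)$ is then automatically an $\epsilon$-homeomorphism, because after the substitution $y=g^{-1}(x)$ one has
\[
d_{\sup}(h\circ g^{-1},\text{Id})=\sup_{x\in L}d(h(g^{-1}(x)),x)=\sup_{y\in L}d(h(y),g(y))=d_{\sup}(h,g)<\epsilon,
\]
so right-translation by $g^{-1}$ preserves the supremum metric.

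Next, I would apply Theorem~\ref{epsgen} to the (unique) $\tilde g\in\aut$ with $\tilde g^*=g$, obtaining a factorisation $\tilde g=f_1\circ\cdots\circ f_n$ in $\aut$ whose induced homeomorphisms $f_1^*,\ldots,f_n^*$ of $L$ are all $\epsilon$-homeomorphisms. The map $\aut\to H(L)$, $h\mapsto h^*$, is a group homomorphism, directly from the defining relation $h^*\circ\pi=\pi\circ h$ recorded in Remark~\ref{topo}; hence $g=\tilde g^*=f_1^*\circ\cdots\circ f_n^*$. Combining,
\[
h=e\circ f_1^*\circ\cdots\circ f_n^*,
\]
writes $h$ as a product of $\epsilon$-homeomorphisms of $L$, which is precisely what it means for $H(L)$ to be generated by every neighbourhood of the identity.

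There is no real obstacle here; the corollary is a soft consequence of Theorem~\ref{epsgen} together with the density of $\aut^*$ in $H(L)$. The only minor verifications are the right-invariance $d_{\sup}(a\circ b^{-1},\text{Id})=d_{\sup}(a,b)$ used above, and the multiplicativity of $h\mapsto h^*$, both of which are immediate from the definitions.
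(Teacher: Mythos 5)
Your proposal is correct and is exactly the argument the paper intends: the authors simply state that Corollary \ref{epsgen_c} follows immediately from Theorem \ref{epsgen} together with the density of $\aut$ in $H(L)$ (Corollary \ref{dense}), and your write-up supplies precisely the routine details they omit (approximating $h$ by some $g=\tilde g^*$ within $\epsilon$, noting the error term $h\circ g^{-1}$ is an $\epsilon$-homeomorphism by right-invariance of $d_{\sup}$, and decomposing $\tilde g$ via Theorem \ref{epsgen}). No gaps; same route.
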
   

\begin{corollary}\label{open}
The group $H(L)$ has no proper open subgroup.
\end{corollary}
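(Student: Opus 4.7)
The plan is to derive this corollary directly from the fact that $H(L)$ is generated by every neighbourhood of the identity (Corollary~\ref{epsgen_c}). The argument is a standard one-line deduction in the context of topological groups, so there is no real obstacle; the substance of the work is already contained in Theorem~\ref{epsgen} and Corollary~\ref{epsgen_c}.

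Specifically, I would start by letting $U$ be any open subgroup of $H(L)$. Since $U$ is open and contains the identity $\mathrm{Id}$, there exists $\epsilon>0$ such that the $\epsilon$-ball $V=\{f\in H(L)\colon d_{\sup}(f,\mathrm{Id})<\epsilon\}$ is contained in $U$. By Corollary~\ref{epsgen_c}, every element $h\in H(L)$ can be written as a finite product $h=f_1\circ\cdots\circ f_n$ with each $f_i\in V$. Since $U$ is a subgroup of $H(L)$ containing $V$, it contains every such product, and therefore $h\in U$. Hence $U=H(L)$, showing that $H(L)$ has no proper open subgroup.

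I would keep the write-up to just a few lines, since there is no hidden technicality: the content lies entirely in Corollary~\ref{epsgen_c}. The only minor subtlety worth noting explicitly is that one should choose the neighbourhood inside $U$ to be of the form used in the statement of Corollary~\ref{epsgen_c} (a metric ball around the identity), which is immediate from the definition of the compact-open topology on $H(L)$ combined with the fixed compatible metric $d$ on $L$.
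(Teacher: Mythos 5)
Your proof is correct and follows exactly the route the paper intends: the corollary is stated as an immediate consequence of Theorem \ref{epsgen} via Corollary \ref{epsgen_c}, and your one-line deduction (an open subgroup contains an $\epsilon$-ball around the identity, which generates all of $H(L)$) is the standard argument the authors leave implicit.
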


A Polish group is {\em non-archimedean} if it contains a basis of the identity that consists of open subgroups.
This class of groups is equal to the class of automorphism groups  of countable model-theoretic structures. 
\begin{corollary}\label{arch}
The group $H(L)$ is not a non-archimedean group.
\end{corollary}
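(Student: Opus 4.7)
The plan is to argue by contradiction and deduce the result directly from Corollary \ref{open}. Suppose that $H(L)$ is non-archimedean. Then by definition there is a neighborhood basis $\{V_i\}_{i\in I}$ of the identity in $H(L)$ such that each $V_i$ is an open subgroup of $H(L)$.

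Now I invoke Corollary \ref{open}: since $H(L)$ has no proper open subgroup, each $V_i$ must equal $H(L)$. Hence the basis $\{V_i\}$ reduces to $\{H(L)\}$ itself, and so $H(L)$ is the only open neighborhood of the identity in $H(L)$; equivalently, $H(L)$ carries the indiscrete topology. But $H(L)$ is a topological group (hence Hausdorff), it contains more than one element by Remark \ref{topo}(2), and in fact it is totally disconnected by Proposition \ref{discon}. A Hausdorff space with more than one point cannot be indiscrete, which gives the desired contradiction.

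There is no real obstacle in this argument; all the work has already been done upstream. The substantive content is Theorem \ref{epsgen} and its consequence Corollary \ref{epsgen_c}, from which Corollary \ref{open} is read off by observing that any open subgroup $U\leq H(L)$ is a neighborhood of the identity and so, being closed under composition, must equal the subgroup it generates, which by Corollary \ref{epsgen_c} is all of $H(L)$. Once this is in place, the deduction of Corollary \ref{arch} is a one-line topological observation.
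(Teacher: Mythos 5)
Your argument is correct and is exactly the route the paper intends: the corollary is stated as an immediate consequence of Corollary \ref{open}, since a basis of open subgroups at the identity would force the only open neighbourhood of the identity to be all of $H(L)$, contradicting the fact that $H(L)$ is a nontrivial Hausdorff (indeed totally disconnected) group. Nothing is missing.
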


 The following is a classical theorem about locally compact groups.

\begin{theorem}[van Dantzig, \cite{HR} (7.7)]\label{vd} 
A totally disconnected locally compact group admits a basis at the identity that consists of compact open subgroups.
\end{theorem}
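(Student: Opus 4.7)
The plan is to establish the theorem in two stages: first show that a totally disconnected locally compact group $G$ admits a neighbourhood basis at the identity $e$ consisting of compact open \emph{sets}, and then refine each such set to a compact open \emph{subgroup}.

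For the first stage, fix a compact neighbourhood $K$ of $e$, which exists by local compactness. As a subspace of $G$, the space $K$ is compact Hausdorff and totally disconnected. I would then invoke the point-set topology fact that in a compact Hausdorff space the quasi-component of a point coincides with its connected component, which here reduces to $\{e\}$; consequently $K$ has a basis of clopen neighbourhoods at each of its points. Given an arbitrary open $U\ni e$ in $G$, any clopen-in-$K$ neighbourhood $U_0$ of $e$ contained in $U\cap\operatorname{int}(K)$ is automatically compact (closed in the compact set $K$) and open in $G$ (open subset of the open set $\operatorname{int}(K)$), which yields the desired basis of compact open neighbourhoods at $e$.

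For the second stage, fix a compact open neighbourhood $U$ of $e$ and aim to produce a compact open subgroup $H\subseteq U$. By continuity of multiplication, for each $u\in U$ there exist open sets $V_u\ni e$ and $W_u\ni u$ with $V_u W_u\subseteq U$. Compactness of $U$ yields a finite subcover $W_{u_1},\dots,W_{u_n}$ of $U$; then $V'=U\cap\bigcap_i V_{u_i}$, symmetrised as $V=V'\cap(V')^{-1}$, is a symmetric open neighbourhood of $e$ satisfying $V\subseteq U$ and $VU\subseteq U$. An easy induction gives $V^n\subseteq U$ for every $n\geq 1$, so $H=\bigcup_{n\geq 1}V^n$ is contained in $U$. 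This $H$ is a subgroup of $G$ (symmetric by construction and multiplicatively closed since $V^m V^n=V^{m+n}$), it is open (being a union of left-translates of the open set $V$), and hence closed in $G$ since every open subgroup of a topological group is also closed. Being closed inside the compact set $U$, the subgroup $H$ is compact.

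The main obstacle is the zero-dimensionality step in the first stage: the passage from total disconnectedness to the existence of genuine clopen neighbourhoods really does require the compact Hausdorff fact that quasi-components coincide with connected components, which is nontrivial point-set input and the only part of the argument that uses any more than the definitions. Once that is in hand, everything else is a direct manipulation of open neighbourhoods in a topological group, relying solely on continuity of multiplication and inversion together with the local compactness of $G$; combining the two stages, the compact open subgroups form a neighbourhood basis at $e$.
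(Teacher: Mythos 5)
The paper does not prove this statement at all: it is quoted as a classical theorem of van Dantzig with a citation to Hewitt--Ross (7.7), so there is no in-paper argument to compare against. Your two-stage proof --- first extracting a compact open neighbourhood basis from the coincidence of components and quasi-components in the compact Hausdorff set $K$, then growing a symmetric open $V$ with $VU\subseteq U$ into the open, hence closed, hence compact subgroup $H=\bigcup_{n\ge 1}V^n$ --- is the standard and correct proof of van Dantzig's theorem, essentially the one found in the cited reference.
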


Since $H(L)$ is totally disconnected (by Proposition \ref{discon}), the theorem above implies the following corollary.
\begin{corollary}\label{loc}
The group $H(L)$ is not locally compact.
\end{corollary}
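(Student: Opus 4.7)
The plan is to proceed by contradiction, combining Theorem \ref{vd} with Corollary \ref{open}. Suppose toward a contradiction that $H(L)$ is locally compact. Since $H(L)$ is totally disconnected by Proposition \ref{discon}, van Dantzig's theorem supplies a neighbourhood basis at the identity consisting of compact open subgroups; in particular, there exists at least one compact open subgroup $U \leq H(L)$.

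Next, I invoke Corollary \ref{open}, which states that $H(L)$ has no proper open subgroup. This forces $U = H(L)$, so $H(L)$ itself must be compact. Combined with the fact that $H(L)$ is Polish (being the homeomorphism group of a compact metric space) and totally disconnected, this means $H(L)$ is a profinite group.

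Finally, I close the loop by noting that any non-trivial profinite group admits a proper open subgroup of finite index: applying van Dantzig again to a non-trivial Hausdorff compact totally disconnected group produces a basis of open subgroups at the identity, and since the topology is not indiscrete, at least one such subgroup must be proper. Since $H(L)$ is non-trivial by Remark \ref{topo}(2), this yields a proper open subgroup of $H(L)$, directly contradicting Corollary \ref{open}. Hence $H(L)$ cannot be locally compact.

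There is no real obstacle here: the work has already been done in establishing total disconnectedness (Proposition \ref{discon}) and the absence of proper open subgroups (Corollary \ref{open}), and van Dantzig's theorem packages exactly the ingredient that turns these two facts into an obstruction to local compactness.
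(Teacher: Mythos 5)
Your proof is correct and follows essentially the same route the paper intends: the paper derives the corollary by combining van Dantzig's theorem with the fact that $H(L)$ is totally disconnected and has no proper open subgroup (equivalently, is not non-archimedean), which is exactly your argument, merely spelled out in more detail via the detour through compactness.
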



\subsection{Conjugacy classes of $\mathbf{H(L)}$}

The main result of this subsection is the following theorem.
\begin{theorem}\label{xyz}
 The group of all homeomorphisms of the Lelek fan, $H(L)$, has a dense conjugacy class.
\end{theorem}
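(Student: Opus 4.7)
The plan is to reduce to topological transitivity of the conjugation action of $\aut$ on itself, then carry out a projective analog of the Kechris--Rosendal characterization of dense conjugacy classes via joint amalgamation of finite partial automorphisms. By Corollary \ref{dense} and Remark \ref{topo}(3), $\aut$ is dense in $H(L)$ and carries a finer topology, so it suffices to show that for every $H_1, H_2 \in \aut$ and every $\epsilon > 0$ there is $G \in \aut$ with $d_{\sup}(G^{*} H_1^{*} (G^{*})^{-1}, H_2^{*}) < \epsilon$. Topological transitivity together with Baire category then yields a dense $G_\delta$ set of elements with dense conjugacy class, proving the theorem.

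First I will produce a simultaneous finite-level trace of $H_1$ and $H_2$: an epimorphism $\alpha\colon \lel \to T$ with $T \in \f$, whose fibers $\pi\circ\alpha^{-1}(t)$ have $L$-diameter less than $\epsilon/2$, and such that both $H_1, H_2$ descend to automorphisms $\Phi_1, \Phi_2 \in {\rm Aut}(T)$, namely $\alpha \circ H_i = \Phi_i \circ \alpha$. To construct such $\alpha$, I will start from a small-fiber partition guaranteed by Remark \ref{coveri} and iteratively pass to joint refinements with the $H_i^{\pm 1}$-translates of its cells, again via (L2); the iteration will terminate by the oligomorphicity of $\aut$ on Fra\"{\i}ss\'e levels noted in the introduction (see \cite{BKn}).

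Next I will establish a joint projection property for the pair class $\f^p := \{(T,\Phi) : T \in \f,\ \Phi \in {\rm Aut}(T)\}$ with equivariant epimorphisms as morphisms. Given $(T,\Phi_1),(T,\Phi_2)\in \f^p$, let $S \in \f$ have branches indexed by $B(T) \times B(T)$ (all of the same length as branches of $T$), let $\Sigma = \Phi_1 \times \Phi_2$ act componentwise on the indexing pairs, and take the coordinate epimorphisms $\pi_i\colon S \to T$, $((b_1,b_2),k) \mapsto (b_i,k)$; these satisfy $\pi_i \circ \Sigma = \Phi_i \circ \pi_i$. Then by Proposition \ref{fraisse} I lift $\alpha$ along $\pi_1$ and $\pi_2$ to epimorphisms $\beta_1, \beta_2 \colon \lel \to S$ with $\pi_i \circ \beta_i = \alpha$. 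A back-and-forth inside $\aut$ upgrades $\beta_1$ to be equivariant, $\beta_1 \circ H_1 = \Sigma \circ \beta_1$. Applying (L3) to $\beta_1, \beta_2$ gives $G \in \aut$ with $\beta_1 = \beta_2 \circ G$; combining these identities one computes $\beta_2 \circ G H_1 G^{-1} = \beta_1 \circ H_1 \circ G^{-1} = \Sigma \circ \beta_1 \circ G^{-1} = \Sigma \circ \beta_2$, and then applying $\pi_2$ yields $\alpha \circ G H_1 G^{-1} = \Phi_2 \circ \alpha = \alpha \circ H_2$. Hence $G H_1 G^{-1}(y)$ and $H_2(y)$ share an $\alpha$-fiber for every $y \in \lel$, giving $d_{\sup}(G^{*} H_1^{*} (G^{*})^{-1}, H_2^{*}) < \epsilon/2$ as required.

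The main obstacle will be the equivariant upgrade of $\beta_1$: Proposition \ref{fraisse} only guarantees $\pi_1 \circ \beta_1 = \alpha$, and additionally forcing $\beta_1 \circ H_1 = \Sigma \circ \beta_1$ requires a delicate back-and-forth, combining (L3) with the pair-JPP iterated at successively finer Fra\"{\i}ss\'e levels, together with a compactness/diagonal argument inside $\aut$. A secondary difficulty is the simultaneous finitization step, where turning both $H_1$ and $H_2$ into genuine automorphisms of a common $T$ depends on the oligomorphicity input from \cite{BKn}; the other stages are then routine applications of the Fra\"{\i}ss\'e machinery developed in Section~2.
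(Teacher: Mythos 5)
Your overall frame (prove density of a conjugacy class in $\aut$, then transfer to $H(L)$ via Corollary \ref{dense} and Remark \ref{topo}(3)) matches the paper, but the core of your argument has two fatal problems. First, your reduction is to a false statement: you propose to show that for \emph{every} $H_1,H_2\in\aut$ and every $\epsilon>0$ some conjugate of $H_1$ is $\epsilon$-close to $H_2$. Taking $H_1=\mathrm{Id}$ shows this cannot hold, since every conjugate of the identity is the identity. What topological transitivity actually requires is: for all nonempty open $U,V$ there exist $H_1\in U$ and $G$ with $GH_1G^{-1}\in V$ --- the freedom to \emph{choose} $H_1$ inside $U$ is essential, and your formulation discards it. In the Fra\"{\i}ss\'e language this freedom is exactly what turns the problem into a joint projection property for finite approximations of automorphisms, which is the route the paper takes (Proposition \ref{abc}, quoted from \cite{Kw}).

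Second, your finitization step --- descending $H_1$ and $H_2$ to genuine automorphisms $\Phi_i\in{\rm Aut}(T)$ of a finite fan with $\alpha\circ H_i=\Phi_i\circ\alpha$ --- is impossible in general, not merely delicate. An automorphism of $\lel$ need not permute the fibers of any fine epimorphism: if $h$ fixes a point $x_0$ but moves points arbitrarily close to $x_0$ by a definite distance $\delta$, then for any $\alpha$ with fibers of diameter less than $\delta$ the fiber containing $x_0$ would have to be mapped to itself yet contains points mapped outside it. Oligomorphicity of $\aut$ gives finitely many orbits on quotients, not invariant partitions for a fixed automorphism, so the iterative refinement you describe need not terminate. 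The same obstruction defeats the ``equivariant upgrade'' $\beta_1\circ H_1=\Sigma\circ\beta_1$. The paper avoids this entirely by finitizing an automorphism as a binary \emph{relation} $s^T$ (the pushforward of its graph) rather than as a function, characterizing which expanded fans $(T,s^T)$ arise this way (Lemma \ref{obs} and Lemma \ref{char}), and then verifying the JPP for that class $\f^+$ by the elementary construction of gluing two fans at their roots. You would need to rebuild your argument on that footing.
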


 This will follow from Theorem \ref{aut}.

\begin{theorem}\label{aut}
 The group of all automorphisms of $\lel$, $\aut$, has a dense conjugacy class.
\end{theorem}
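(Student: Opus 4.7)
My plan is to dualise the standard Kechris--Rosendal characterisation of groups with a dense conjugacy class. Since $\lel$ is zero-dimensional and by Remark \ref{coveri} every clopen partition of $\lel$ is refined by an epimorphism onto some fan in $\f$, the sets
\[
V_\phi^\sigma=\{h\in\aut\colon\phi\circ h=\sigma\circ\phi\},\qquad\phi\colon\lel\to A\text{ an epimorphism},\ \sigma\in{\rm Aut}(A),
\]
with $A\in\f$ form a basis of $\aut$, and projective ultrahomogeneity (L3) applied to the epimorphisms $\phi$ and $\sigma\circ\phi$ shows that each $V_\phi^\sigma$ is non-empty. A second application of (L3) gives that any two epimorphisms $\phi,\phi'\colon\lel\to A$ are related by some $g\in\aut$ via $\phi'=\phi\circ g$. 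Hence the conjugacy class of $h$ meets $V_\phi^\sigma$ precisely when there is some epimorphism $\phi'\colon\lel\to A$ with $\phi'\circ h=\sigma\circ\phi'$. So the problem reduces to producing a single $h\in\aut$ such that for every pair $(A,\sigma)$ with $A\in\f$ and $\sigma\in{\rm Aut}(A)$ there is an epimorphism $\phi\colon\lel\to A$ with $\phi\circ h=\sigma\circ\phi$.

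The second step is to verify a joint projection (in fact amalgamation) property for the class of pairs $(A,\sigma)$, where a morphism $(C,\rho)\to(A,\sigma)$ is an epimorphism $\psi\colon C\to A$ with $\psi\circ\rho=\sigma\circ\psi$. Because every branch of a fan in $\f$ is a chain of common length and the root is fixed, an automorphism of $A\in\f$ is just a permutation of its branches. Given $(B,\tau)$ and $(A,\sigma)$ with $k_B,k_A$ branches of lengths $l_B,l_A$, I would take $C\in\f$ with $k_Bk_A$ branches indexed by $(i,j)$ of common length $\max(l_B,l_A)$, let $\rho$ permute branch indices by $(\tau,\sigma)$, and define projections onto $B$ and $A$ by forgetting the respective index and collapsing depth $k$ to $\min(k,l_B)$ and $\min(k,l_A)$ on each chain. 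Direct verification yields $\psi_B\circ\rho=\tau\circ\psi_B$ and $\psi_A\circ\rho=\sigma\circ\psi_A$, and fibring this construction over a common quotient upgrades it to full amalgamation.

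With the amalgamation in hand, the desired $h$ is built by a Fra\"{i}ss\'{e}-type induction. Enumerating all pairs $(A_k,\sigma_k)$ up to isomorphism, I would produce an inverse sequence of triples $(T_n,\tau_n,f_n)$ with $T_n\in\f$, $\tau_n\in{\rm Aut}(T_n)$ and $f_n\circ\tau_{n+1}=\tau_n\circ f_n$, arranging that $(T_n,f_n)$ is a Fra\"{i}ss\'{e} sequence in the sense of Section 2 and that each $(A_k,\sigma_k)$ eventually admits an equivariant epimorphism from some $(T_n,\tau_n)$. The equivariant amalgamation handles both the realisation of each $(A_k,\sigma_k)$ and the amalgamation demands inherent in the definition of a Fra\"{i}ss\'{e} sequence, while uniform ``doubling'' and ``lengthening'' moves applied to all branches of $T_n$ at once secure properties (3) and (4) without breaking equivariance. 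By uniqueness of the projective Fra\"{i}ss\'{e} limit, $\varprojlim T_n\cong\lel$, and the sequence $(\tau_n)$ assembles into an $h\in\aut$ whose conjugacy class is dense by the reduction of the first paragraph.

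The principal obstacle is ensuring that the external amalgamations required by condition (2) of a Fra\"{i}ss\'{e} sequence -- given by epimorphisms $\phi_1\colon T_n\to T$ and $\phi_2\colon S\to T$ that are not a priori compatible with $\tau_n$ -- admit $\tau_n$-equivariant lifts in $\f$. The standard amalgam from the proof of Proposition \ref{ap} need not carry a compatible automorphism, but this can be repaired by symmetrising the amalgam under the action of the finite cyclic group $\langle\tau_n\rangle$, for instance by taking $|\langle\tau_n\rangle|$ disjoint copies of the amalgam on which $\tau_{n+1}$ acts by cyclic rotation, and then checking that the result remains a fan in $\f$ with all required epimorphisms.
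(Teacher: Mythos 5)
The decisive step of your argument---the claim that the sets $V_\phi^\sigma=\{h\in\aut\colon\phi\circ h=\sigma\circ\phi\}$, with $\phi\colon\lel\to A$ an epimorphism and $\sigma\in{\rm Aut}(A)$, form a basis of $\aut$---is unjustified and, as far as I can see, false. By Remark \ref{coveri} the fibre partitions of epimorphisms are cofinal among clopen partitions, so a basis of $\aut$ is given by the sets $\{h\colon\phi\circ h=\psi\}$ where $\psi$ ranges over \emph{all} epimorphisms $\lel\to A$, not only those of the form $\sigma\circ\phi$. Membership of $h$ in some $V_\phi^\sigma$ means that $h$ permutes the fibres of $\phi$; an arbitrary $h\in\aut$ need not permute the fibres of any sufficiently fine epimorphism (compare a homeomorphism of the Cantor set with north--south dynamics: no fine clopen partition separating the two fixed points is invariant under it). In general the relation $\{(\phi(x),\phi(h(x)))\colon x\in\lel\}$ induced by $h$ on $A$ is a surjective binary relation that is not the graph of any automorphism of $A$, and the corresponding nonempty basic open set $\{h'\colon\phi\circ h'=\phi\circ h\}$ is then disjoint from every $V_\phi^\sigma$ over the same $\phi$. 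Consequently, realizing every pair $(A,\sigma)$ with $\sigma\in{\rm Aut}(A)$ only shows that the closure of the conjugacy class of your $h$ contains $\bigcup_{\phi,\sigma}V_\phi^\sigma$, which you have not shown to be all of $\aut$. (To rescue the reduction you would at least need that every nonempty set $\{h\colon\phi\circ h=\psi\}$ \emph{contains} some nonempty $V_{\phi'}^{\sigma'}$ for a finer $\phi'$; that amounts to a weak-amalgamation-type lifting statement in the expanded class and is not addressed.)

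This is exactly the point the paper's proof is organized around: it invokes the projective Kechris--Rosendal criterion (Proposition \ref{abc}) for the class $\f^+$ of structures $(A,s^A)$ where $s^A$ is the image of the graph of an automorphism of $\lel$ under an epimorphism. Lemmas \ref{obs} and \ref{char} identify these $s^A$ concretely as the surjective relations containing $(r_A,r_A)$ in which every related pair is $s^A$-connected to the root---these are typically multivalued, not automorphisms of $A$---and the JPP for $\f^+$ is then verified by wedging two expanded fans at their roots. Your equivariant amalgamation and the cyclic-symmetrization device in the last two paragraphs are plausible in themselves (and an automorphism of a fan in $\f$ is indeed just a branch permutation), but they operate entirely within the too-small class of genuine automorphism expansions, so even if carried out they would not close the gap. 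To repair the proof you would need to replace $(A,\sigma)$ throughout by $(A,s^A)\in\f^+$, or prove separately that the automorphism expansions are cofinal in $\f^+$ in a sense strong enough to recover a $\pi$-base of $\aut$.
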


Let us first see how Theorem \ref{aut} implies Theorem \ref{xyz}.
\begin{proof}[Proof of Theorem \ref{xyz}]
As noticed in Remark \ref{topo}, the group $\aut$ can be identified with a subgroup of $H(L)$ and its topology is finer than the one
inherited from $H(L)$. From Corollary \ref{dense}, $\aut$ is a dense subset of $H(L)$.  
From these observations, a set which is dense in $\aut$ is also dense in $H(L)$.
\end{proof}

To show Theorem \ref{aut}, we use the criterion stated in Proposition \ref{abc} below.
The proof of this criterion is given in \cite{Kw} in Theorem A1, and it is an analog of a theorem due to Kechris-Rosendal  \cite{KR}  in the context of the (injective) Fra\"{i}ss\'{e} theory.

Let $\g$ 
be a projective Fra\"{i}ss\'{e} family of finite $\mathcal{L}_0$-structures, for some language $\mathcal{L}_0$,
 with the limit $\mathbb{G}$.
Let $s$ be a binary relation symbol and let $\mathcal{L}_1$ be the language $\mathcal{L}_0\cup \{s\}.$ Define a class $\g^+$ of finite 
$\mathcal{L}_1$-structures  as follows:
\begin{equation*}
\begin{split}
\g^+= & \{ (A,s^A)\colon A\in\g  {\rm\ and \ there\ are\ }\phi\colon\mathbb{G}\to A {\rm\ and\ } f\in {\rm Aut}(\mathbb{G}) { \rm\ such\ that }\\
 &\phi\colon (\mathbb{G}, {\rm graph}(f))\to (A,s^A) { \rm\ is\ an\ epimorphism }\},
\end{split}
\end{equation*}
where $ {\rm graph}(f)$ is viewed as a closed relation on $\mathbb{G}$: $ {\rm graph}(f)(x,y)$ if and only if $f(x)=y$.

As in Subsection \ref{frase}, say that $\g^+$ has the joint projection property (the JPP) if and only if for every $(A,s^A),(B,s^B)\in \g^+$ there is $(C,s^C)\in\g^+$ and epimorphisms from  $(C,s^C)$ onto $(A,s^A)$ and from $(C,s^C)$ onto $(B,s ^B).$

\begin{proposition}[\cite{Kw}]\label{abc}
The group ${\rm Aut}(\mathbb{G})$ has a dense conjugacy class if and only if $\g^+$ has the JPP.
\end{proposition}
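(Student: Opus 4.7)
The plan is to follow an analog of the Kechris--Rosendal criterion, adapted to the projective Fra\"iss\'e setting. For the forward direction $(\Rightarrow)$, I would suppose $g_0 \in {\rm Aut}(\mathbb{G})$ has dense conjugacy class, and take $(A, s^A), (B, s^B) \in \g^+$ witnessed respectively by pairs $(\phi_A, f_A), (\phi_B, f_B)$. The basic open neighborhoods $U_A = \{h \colon \phi_A \circ h = \phi_A \circ f_A\}$ and $U_B$ (defined analogously) each meet the conjugacy class $[g_0]$, yielding automorphisms $k_A, k_B$ with $\phi_A \circ (k_A g_0 k_A^{-1}) = \phi_A \circ f_A$ and analogously for $B$. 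A short calculation shows $\phi_A \circ k_A$ is an $\mathcal{L}_1$-epimorphism from $(\mathbb{G}, {\rm graph}(g_0))$ onto $(A, s^A)$, and similarly for $\phi_B \circ k_B$. Applying (L2) to the continuous map $(\phi_A \circ k_A, \phi_B \circ k_B) \colon \mathbb{G} \to A \times B$ yields $C \in \g$, an epimorphism $\rho \colon \mathbb{G} \to C$, and maps $\pi_A \colon C \to A, \pi_B \colon C \to B$ through which the above factors. Defining $s^C(c_1, c_2) \iff \exists x \in \mathbb{G}$ with $\rho(x) = c_1$ and $\rho(g_0(x)) = c_2$ puts $(C, s^C) \in \g^+$ (witnessed by $(\rho, g_0)$), and $\pi_A$, $\pi_B$ are readily seen to be $\mathcal{L}_1$-epimorphisms, establishing the JPP of $\g^+$.

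For the reverse direction $(\Leftarrow)$, I would argue via Baire category. Pick a countable basis $(V_n)$ of nonempty basic opens in ${\rm Aut}(\mathbb{G})$ and show each set $O_{V_n} = \{f \colon [f] \cap V_n \neq \emptyset\} = \bigcup_k k V_n k^{-1}$ is open and dense; a point in the comeager intersection $\bigcap_n O_{V_n}$ then has dense conjugacy class. Openness is immediate. For density, given basic opens $W, V$, it suffices to produce conjugate elements $g_W \in W$ and $g_V \in V$. Associate $W$, $V$ to $(A_W, s^{A_W}), (A_V, s^{A_V}) \in \g^+$ via their witnessing pairs $(\phi_W, f_W), (\phi_V, f_V)$, and invoke JPP of $\g^+$ to obtain $(C, s^C) \in \g^+$ witnessed by some $(\phi_C, f_C)$ together with $\mathcal{L}_1$-epimorphisms $\pi_W \colon (C, s^C) \to (A_W, s^{A_W})$ and $\pi_V \colon (C, s^C) \to (A_V, s^{A_V})$. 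The plan is then to realize the required conjugate elements as $k_W f_C k_W^{-1} \in W$ and $k_V f_C k_V^{-1} \in V$ for suitable $k_W, k_V \in {\rm Aut}(\mathbb{G})$; being both conjugates of $f_C$, they are mutually conjugate.

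I expect the main obstacle to lie in constructing the intertwining automorphism $k_W$ (and symmetrically $k_V$) realizing the pointwise identity $\phi_W(k_W f_C k_W^{-1}(x)) = \phi_W(f_W(x))$. Setting $\phi'_W = \pi_W \circ \phi_C$, this composition is an $\mathcal{L}_1$-epimorphism, so $(\phi'_W, f_C)$ witnesses $(A_W, s^{A_W})$ in the same manner as $(\phi_W, f_W)$. Projective ultrahomogeneity (L3) supplies an automorphism $k$ with $\phi_W \circ k = \phi'_W$, but only the first-coordinate data is then matched; the required identity additionally demands the dynamical compatibility $\phi_W \circ f_W \circ k = \phi'_W \circ f_C$, which is not guaranteed by (L3) alone. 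My plan to overcome this is a back-and-forth: representing $\mathbb{G}$ as the inverse limit of a Fra\"iss\'e sequence, I would build partial approximations to $k_W$ level by level, alternately enforcing the two equalities $\phi_W k = \phi'_W$ and $\phi_W f_W k = \phi'_W f_C$, and invoking the extension property (Proposition \ref{fraisse}) together with iterated applications of JPP of $\g^+$ to sufficiently fine refinements of $(C, s^C)$ in order to absorb the discrepancy between the two constraints at each stage. In the limit the resulting $k_W$ is an automorphism of $\mathbb{G}$ satisfying both compatibilities; the symmetric construction yields $k_V$, and Baire category then delivers a comeager, hence nonempty, set of elements of ${\rm Aut}(\mathbb{G})$ with dense conjugacy class.
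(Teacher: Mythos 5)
The paper does not prove this proposition at all: it is quoted from \cite{Kw} (Theorem A1), so there is no in-paper argument to compare against, and your proposal has to stand on its own. Your forward direction does: the sets $\{h\colon \phi_A\circ h=\phi_A\circ f_A\}$ are nonempty open, the computation showing that $\phi_A\circ k_A$ is an $\mathcal{L}_1$-epimorphism from $(\mathbb{G},{\rm graph}(g_0))$ onto $(A,s^A)$ is correct, and factoring the pair through a common refinement $C$ carrying the relation induced by $g_0$ yields the JPP. The Baire-category frame for the converse is also the right one.

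The gap is exactly where you flag it, and your proposed repair does not close it. You insist that a conjugate of the particular witness $f_C$ land in the particular basic open $W=\{h\colon\phi_W h=\phi_W f_W\}$, which forces the pointwise identities $\phi_W k_W=\pi_W\phi_C$ and $\phi_W f_W k_W=\pi_W\phi_C f_C$ simultaneously. Running your back-and-forth at any finite stage requires a common refinement in $\g$ of the two pairs $(\phi_W,\phi_W\circ f_W)$ and $(\pi_W\circ\phi_C,\pi_W\circ\phi_C\circ f_C)$ with the \emph{same} factoring map onto $s^{A_W}\subseteq A_W\times A_W$; that is an amalgamation of two members of $\g^+$ \emph{over} $(A_W,s^{A_W})$, i.e.\ an AP-type statement for $\g^+$, which the JPP does not provide, so the construction stalls at the first step. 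Indeed it is unclear that any conjugate of $f_C$ meets $W$ at all; what one can show is only that some conjugate meets the larger clopen set $N_{\phi_W,s^{A_W}}=\{h\colon \phi_W \text{ is an epimorphism from } (\mathbb{G},{\rm graph}(h)) \text{ onto } (A_W,s^{A_W})\}$.

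The fix is to change the basis rather than to strengthen the intertwining. First check that the sets $N_{\phi,s}$ form a basis of ${\rm Aut}(\mathbb{G})$: given $f_0$ and $U=\{h\colon\phi h=\phi f_0\}$, pick $\phi'$ through which both $\phi$ and $\phi\circ f_0$ factor; then $f_0\in N_{\phi',s'}\subseteq U$ for $s'=\{(\phi'(x),\phi'(f_0(x)))\colon x\in\mathbb{G}\}$. Membership in $N_{\phi,s}$ depends only on the induced relation, so the moves you already have suffice: realize $(C,s^C)$ by a pair $(\psi,g)$ via Lemma \ref{obs} and (L3); since $\pi_W\circ\psi$ and $\phi_W$ are two epimorphisms onto $A_W$, one application of (L3) gives $k$ with $\pi_W\circ\psi\circ k=\phi_W$, and then $\{(\phi_W(x),\phi_W(k^{-1}gk(x)))\colon x\in\mathbb{G}\}=\pi_W(s^C)=s^{A_W}$, so $k^{-1}gk\in N_{\phi_W,s^{A_W}}$. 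The same on the $V$ side produces two conjugates of $g$, one in each basic set, and your Baire-category argument then finishes the proof.
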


The lemma below is a general fact of the projective Fra\"{i}ss\'{e} theory. 
\begin{lemma}\label{obs}
Let $\g$ be a projective Fra\"{i}ss\'{e} family with the limit $\mathbb{G}$.  
Then $(T,s^T)\in \g^+$ if and only if there is $S\in \g$ and there are epimorphisms $p_1\colon S\to T$ and $p_2\colon S\to T$ such that
 $s^T=\{(p_1(x),p_2(x))\colon x\in S\}$.
\end{lemma}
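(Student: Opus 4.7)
The plan is to prove each direction of the equivalence using a different axiom of the projective Fra\"{i}ss\'{e} limit: the forward direction will rely on (L2), and the reverse direction on (L1) combined with (L3).

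For the forward direction, suppose $(T,s^T)\in\g^+$, witnessed by an $\mathcal{L}_0$-epimorphism $\phi\colon\mathbb{G}\to T$ and $f\in\mathrm{Aut}(\mathbb{G})$ with $s^T=\{(\phi(x),\phi(f(x))) : x\in\mathbb{G}\}$. I would consider the continuous map $\Phi\colon\mathbb{G}\to T\times T$ given by $\Phi(x)=(\phi(x),\phi(f(x)))$, with $T\times T$ viewed as finite and discrete. Applying (L2) to each coordinate (or directly to $\Phi$), one finds $S\in\g$, an $\mathcal{L}_0$-epimorphism $\psi\colon\mathbb{G}\to S$, and a function $F\colon S\to T\times T$ with $\Phi=F\circ\psi$. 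Setting $p_i=\pi_i\circ F$ yields $p_1,p_2\colon S\to T$ satisfying $p_1\circ\psi=\phi$ and $p_2\circ\psi=\phi\circ f$. Surjectivity of each $p_i$ is immediate from surjectivity of $\phi$. Since $\psi$ factors $\Phi$, the map $s\mapsto(p_1(s),p_2(s))$ ranges exactly over the image of $\Phi$, so $\{(p_1(s),p_2(s)):s\in S\}=s^T$.

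For the reverse direction, given $S\in\g$ and epimorphisms $p_1,p_2\colon S\to T$ with $s^T=\{(p_1(x),p_2(x)):x\in S\}$, use (L1) to obtain an epimorphism $\chi\colon\mathbb{G}\to S$. Then $\phi:=p_1\circ\chi$ and $\phi':=p_2\circ\chi$ are both $\mathcal{L}_0$-epimorphisms $\mathbb{G}\to T$, as compositions of such. By projective ultrahomogeneity (L3), there exists $f\in\mathrm{Aut}(\mathbb{G})$ with $\phi'=\phi\circ f$. A direct calculation then gives
\[
\{(\phi(x),\phi(f(x))):x\in\mathbb{G}\}=\{(p_1(\chi(x)),p_2(\chi(x))):x\in\mathbb{G}\}=\{(p_1(s),p_2(s)):s\in S\}=s^T,
\]
using surjectivity of $\chi$, which shows that $\phi\colon(\mathbb{G},\mathrm{graph}(f))\to(T,s^T)$ is an epimorphism and hence $(T,s^T)\in\g^+$.

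The only point that requires care, and thus the main ``obstacle'' in the forward direction, is confirming that $p_1$ and $p_2$ are genuine epimorphisms with respect to $\mathcal{L}_0$ (not merely surjective homomorphisms), since Lemma \ref{obs} is stated for an arbitrary projective Fra\"{i}ss\'{e} family where Remark \ref{epi} may not apply. I would check preservation and reflection of each relation symbol separately: preservation of a relation $r$ by $p_i$ follows from the identities $p_i\circ\psi\in\{\phi,\phi\circ f\}$ together with $\psi$ being an epimorphism (so any tuple related in $S$ lifts to a related tuple in $\mathbb{G}$, which $\phi$ or $\phi\circ f$ sends to a related tuple in $T$); reflection follows from $\phi$ (respectively $\phi\circ f$) being an epimorphism (lifting the tuple back to $\mathbb{G}$) and then applying $\psi$ to land in $S$. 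Once this bookkeeping is dispatched, both directions of the equivalence fall out cleanly.
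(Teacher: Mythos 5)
Your proposal is correct and follows essentially the same route as the paper: the forward direction applies (L2) to the common refinement of the partitions induced by $\phi$ and $\phi\circ f$ (your map $\Phi$ into $T\times T$ is exactly this refinement), and the reverse direction uses (L1) to get $\chi\colon\mathbb{G}\to S$ and then (L3) to produce the automorphism $f$. Your extra check that $p_1,p_2$ are genuine epimorphisms is a detail the paper leaves implicit, and your verification of it is sound.
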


\begin{proof}
($\Leftarrow$) 
Let $S,p_1,p_2$ be as in the hypothesis.
Let $\phi\colon \mathbb{G}\to S$ be any epimorphism (it exists by the universality property (L1) ). Let $\phi_1=p_1\circ \phi$ and let $\phi_2=p_2\circ\phi$. Using the
projective ultrahomogeneity (L3),  get $f\in{\rm Aut}(\mathbb{G})$ such that $\phi_1\circ f=\phi_2$. Then $\phi_1\colon (\mathbb{G},{\rm graph}(f))\to (T,s^T)$ is an epimorphism. So $(T,s^T)\in \g^+$.

($\Rightarrow$)  Let $(T,s^T)\in \g^+$.
Let $\psi\colon (\mathbb{G},f)\to (T,s^T)$ be an epimorphism. Denote $\phi_1=\psi$ and $\phi_2=\phi_1\circ f$. Let  $X$ be the common refinement of the partitions $\phi_1^{-1}(T)$ and $\phi_2^{-1}(T).$
Let $\alpha\colon\mathbb{G}\to S$, $S\in\g$, be an epimorphism refining the partition $X$. 
Then $p_1\colon S\to T$  satisfying $\phi_1=p_1\circ\alpha$ and $p_2\colon S\to T$  satisfying $\phi_2=p_2\circ\alpha$ are as required.
\end{proof}

 Every fan in $\f$ is specified by its height 
 and its width. 
Recall that we assumed that all branches in a given fan  have the same length. The {\em height} of a fan is the number of  elements in a branch minus one (we do not count the root).
The {\em  width} of a fan is  the number of its branches. 
Let $T$ be a fan of height $k$ and width $n$.
If $b$ is a branch in a fan $T$ of height $k$, we denote by $b(j)$ the $j$-th element of $b$, where $j=0,1,2,\ldots,k$
($b(0)$ is the root). 
We say that a binary relation $s^T$ on $T$ is {\em surjective} if for every $t\in T$ there are $r,s\in T$ such that $s^T(t,r)$ and $s^T(s,t)$.
Let $s^T$ be a surjective relation on $T$. Let $b_1,b_2,\ldots, b_n$ be the list of all branches of $T$ and let $r_T$ be the root of $T$. 
We say that $(x_1,y_1)\in T^2$ is {\em $s^T$-adjacent} to $(x_0,y_0)\in T^2$ if and only if 
 $R^T(x_0, x_1)$, $R^T(y_0, y_1)$, $s^T(x_0,y_0)$, and $s^T(x_1,y_1)$.
We say that $(c,d)$ is {\em $s^T$-connected} to $(a,b)$ if and only if 
there is $l$ and $(x_0,y_0),(x_1,y_1),\ldots, (x_l,y_l)\in T^2$
such that  $(x_0,y_0)=(a,b)$, $(x_l,y_l)=(c,d)$, and 
for each $i$,
$(x_{i+1}, y_{i+1})$ is $s^T$-adjacent to $(x_i,y_i)$.

\begin{lemma}\label{char}
We have $(T,s^T)\in\f^+$ if and only if $s^T$ is surjective, $s^T(r_T,r_T)$, and for every $(x,y)$, whenever $s^T(x,y)$, $(x,y)$ is $s^T$-connected to $(r_T,r_T)$.
\end{lemma}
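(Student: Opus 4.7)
The plan is to apply Lemma~\ref{obs}, which reduces $(T,s^T)\in\f^+$ to the existence of some $S\in\f$ and epimorphisms $p_1,p_2\colon S\to T$ with $s^T=\{(p_1(x),p_2(x))\colon x\in S\}$. With this reformulation in hand, both directions become combinatorial arguments about paths in $T\times T$.

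For the forward direction, assume such $S$, $p_1$, $p_2$ exist. First, any epimorphism between fans in $\f$ sends root to root: by surjectivity some $s\in S$ satisfies $\phi(s)=r_T$, and then the chain of immediate predecessors of $s$ must also map to $r_T$ (since $r_T$ has no proper $R^T$-predecessor and $\phi$ is a homomorphism), forcing $\phi(r_S)=r_T$. Hence $p_1(r_S)=p_2(r_S)=r_T$, and $s^T(r_T,r_T)$ follows. Surjectivity of $s^T$ is immediate from surjectivity of $p_1$ and $p_2$. For the connectedness condition, given $(x,y)\in s^T$ pick $z\in S$ with $(p_1(z),p_2(z))=(x,y)$ and consider the chain $r_S=z_0,z_1,\ldots,z_l=z$ from the root to $z$ along the branch of $z$; applying $(p_1,p_2)$ coordinatewise produces an $s^T$-adjacency sequence from $(r_T,r_T)$ to $(x,y)$, because each $p_j$ is a homomorphism and every image pair automatically lies in $s^T$.

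For the reverse direction, I would build $S$ by bundling one $s^T$-connecting path per pair into a separate branch. For each $(x,y)\in s^T$ fix an $s^T$-adjacency path $(a^{xy}_0,b^{xy}_0)=(r_T,r_T),\ldots,(a^{xy}_{l_{xy}},b^{xy}_{l_{xy}})=(x,y)$ supplied by the hypothesis, let $L=\max_{(x,y)\in s^T}l_{xy}$, and pad each path out to length $L$ by repeating the terminal pair $(x,y)$; this is legal because $R^T$ is reflexive. Take $S\in\f$ to be the rooted fan of height $L$ with one branch per pair $(x,y)\in s^T$, all sharing only a common root $r_S$. Define $p_1,p_2\colon S\to T$ by sending $r_S$ to $r_T$ and sending the $i$-th vertex of the $(x,y)$-branch to $a^{xy}_i$ under $p_1$ and to $b^{xy}_i$ under $p_2$.

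Verifying that $p_1,p_2$ are epimorphisms and that $s^T=\{(p_1(z),p_2(z))\colon z\in S\}$ is then straightforward: the homomorphism property along each branch is exactly the $R^T$-part of the $s^T$-adjacency of consecutive pairs; surjectivity of $p_1$ (and symmetrically $p_2$) follows from surjectivity of $s^T$, since for $t\in T$ picking $t'$ with $s^T(t,t')$ makes $t$ the image of the endpoint of the $(t,t')$-branch under $p_1$; and Remark~\ref{epi} upgrades a surjective homomorphism to an epimorphism. The equality $s^T=\{(p_1(z),p_2(z))\colon z\in S\}$ holds since every pair along any connecting path lies in $s^T$, and each $(x,y)\in s^T$ appears as the image of the endpoint of its branch. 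The only mildly subtle point is root preservation for epimorphisms in $\f$; the rest is pure bookkeeping, so I do not expect a serious obstacle.
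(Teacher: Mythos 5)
Your proposal is correct and takes essentially the same route as the paper: both directions reduce to Lemma~\ref{obs}, with the reverse direction building $S$ as a fan having one branch per pair in $s^T$ that traces a fixed $s^T$-connecting path (the paper pads all branches to the uniform length $2k+2$ where $k$ is the height of $T$, while you pad to the maximum path length $L$ --- an immaterial difference), and the forward direction pushing a branch of $S$ from $r_S$ to a preimage of $(x,y)$ through $(p_1,p_2)$. Your explicit verification of root-preservation and of surjectivity of $s^T$ fills in details the paper leaves implicit, but the argument is the same.
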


\begin{proof}
($\Leftarrow$) 
 We define $S,p_1,p_2$ as in Lemma \ref{obs}. 
Let $k$ be the height of $T$. For every $(x,y)$ such that $s^T(x,y)$ we pick a chain of  length $2k+2$  and denote it by
$b_{(x,y)}$. 
Let $S$ be the disjoint union of all chains $b_{(x,y)}$ with their roots identified.
Now we define $p_1$ and $p_2$. Fix $(x,y)$ such that $s^T(x,y)$.
Fix a sequence $(r_T,r_T)=(x_0,y_0), (x_1,y_1),\ldots, (x_l,y_l)=(x,y)$ witnessing that $(x,y)$ is $s^T$-connected to $(r_T,r_T)$.
We let $p_1(b_{(x,y)}(i))=x_i$ and $p_2(b_{(x,y)}(i))=y_i$, whenever $i\leq l$.
We let $p_1(b_{(x,y)}(i))=x$ and $p_2(b_{(x,y)}(i))=y$, whenever $i> l$.

($\Rightarrow$)  Let $(T,s^T)\in \f^+$ and let  $S,p_1,p_2$ be as in Lemma \ref{obs}. 
Clearly $s^T(r_T,r_T)$.
Take $(x,y)$ such that  $s^T(x,y)$ and let $s\in S$ be such that $(x,y)=(p_1(s),p_2(s))$. 
Let $b$ be a branch in $S$ connecting $r_S$ to $s$, i.e. $r_S=s_0=b(0),s_1=b(1),\ldots, s_l=b(l).$ 
Then the sequence $(r_T,r_T)=(p_1(s_0),p_2(s_0)),\\ (p_1(s_1),p_2(s_1)),\ldots, (p_1(s_l),p_2(s_l))=(x,y)$ witnesses  that  $(x,y)$ is 
$s^T$-conne-\\cted to $(r_T,r_T)$.
\end{proof}

\begin{proposition}
The family $\f^+$ has the JPP.
\end{proposition}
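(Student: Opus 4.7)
The plan is to construct $(C, s^C) \in \f^+$ together with epimorphisms from it onto both $(A, s^A)$ and $(B, s^B)$, exploiting the representation given by Lemma \ref{obs} and the JPP of $\f$. By Lemma \ref{obs}, write $s^A = \{(p_1^A(x), p_2^A(x)) : x \in S_A\}$ for some $S_A \in \f$ and epimorphisms $p_i^A \colon S_A \to A$, and similarly $s^B = \{(p_1^B(y), p_2^B(y)) : y \in S_B\}$ for some $S_B \in \f$ and epimorphisms $p_i^B \colon S_B \to B$. Using the JPP of $\f$ established in Proposition \ref{ap}, pick $D \in \f$ together with epimorphisms $\alpha \colon D \to S_A$ and $\beta \colon D \to S_B$.

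I then aim to produce $C \in \f$ and $\mathcal{L}_0$-epimorphisms $\psi_A \colon C \to A$, $\psi_B \colon C \to B$, $p_1^C, p_2^C \colon D \to C$ fitting into the commutative diagram
\[
\psi_A \circ p_i^C \;=\; p_i^A \circ \alpha, \qquad \psi_B \circ p_i^C \;=\; p_i^B \circ \beta \qquad (i = 1, 2).
\]
Given such data, set $s^C := \{(p_1^C(d), p_2^C(d)) : d \in D\}$: Lemma \ref{obs} applied in the reverse direction then immediately gives $(C, s^C) \in \f^+$, while the displayed identities together with the surjectivity of $\alpha$ and $\beta$ yield $(\psi_A \times \psi_A)(s^C) = s^A$ and $(\psi_B \times \psi_B)(s^C) = s^B$. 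Hence $\psi_A$ and $\psi_B$ are $\mathcal{L}_1$-epimorphisms onto $(A, s^A)$ and $(B, s^B)$ respectively, which is exactly the JPP.

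To construct such a $C$, I would take $C$ to be roughly a wedge of two copies of $D$, where on the $i$-th copy $\psi_A$ coincides with $p_i^A \circ \alpha$ and $\psi_B$ with $p_i^B \circ \beta$, while $p_i^C$ is the canonical inclusion of $D$ into the $i$-th copy. Since $p_1^C$ and $p_2^C$ must be surjective onto all of $C$, one further enlarges the source fan used in Lemma \ref{obs} beyond $D$ (for instance, to a wedge of two copies of $D$), which forces some additional pairs into $s^C$; these must still project correctly into $s^A$ and $s^B$. The main obstacle is this combinatorial bookkeeping in fans: choosing branch lengths of $C$ sufficiently large and indexing branches by enough auxiliary data so that all four commutativity equations hold simultaneously, so that $\psi_A$ and $\psi_B$ are genuine $R$-preserving surjections in $\f$, and so that every pair in $s^C$ projects correctly under both $\psi_A$ and $\psi_B$.
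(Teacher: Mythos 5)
Your overall strategy---represent $s^A$ and $s^B$ via Lemma \ref{obs}, joint-project the two source fans onto a common $D$, and then build an amalgam $C$---is workable in spirit but the proof has a genuine gap exactly where you flag ``combinatorial bookkeeping'': the construction of $C$ together with $\psi_A,\psi_B,p_1^C,p_2^C$ is never actually carried out, and the natural candidate does not satisfy the hypotheses you need. If $C$ is the wedge of two copies of $D$ and $p_i^C$ is the inclusion of $D$ into the $i$-th copy, then $p_1^C$ and $p_2^C$ are not surjective, so the reverse direction of Lemma \ref{obs} does not apply. The obvious repair---enlarging the source to a wedge of two copies of $D$ and letting the second copy map ``crosswise'' so that both $q_1$ and $q_2$ become onto---forces the extra pairs $(p_2^C(d),p_1^C(d))$ into $s^C$, and these project under $\psi_A\times\psi_A$ to $(p_2^A(\alpha(d)),p_1^A(\alpha(d)))$, which lies in $s^A$ only when $s^A$ is symmetric. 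So the added pairs need not project correctly, and it is precisely this step that would have to be redesigned (e.g.\ by verifying membership in $\f^+$ through the combinatorial characterization rather than through Lemma \ref{obs}). Since the epimorphism condition for the relation symbol $s$ is an ``if and only if,'' you cannot afford any stray pairs in $s^C$.

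For comparison, the paper's proof avoids all of this: it takes $T$ to be the wedge of $A$ and $B$ at their roots, puts $s^T(x,y)$ exactly when both points lie in the same summand and are $s$-related there, and checks $(T,s^T)\in\f^+$ directly from Lemma \ref{char} (surjectivity of $s^T$, $s^T(r_T,r_T)$, and $s^T$-connectedness to the pair of roots), all of which are inherited summand-by-summand from $(A,s^A)$ and $(B,s^B)$. The two collapse maps (identity on one summand, everything to the root on the other) are then the required epimorphisms. No passage through the source structures $S_A$, $S_B$, $D$ is needed at all.
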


\begin{proof}
Let $(T_1,s^{T_1})$, $(T_2,s^{T_2})\in\f^+$. For the JPP, take $T$ to be the disjoint union of $T_1$ and $ T_2$ with their respective roots identified.
For $x,y\in T$ we let $s^T(x,y)$ if and only if either $x,y\in T_1$ and $s^{T_1}(x,y)$, or $x,y\in T_2$ and $s^{T_2}(x,y)$.
Then, using Lemma \ref{char}, we conclude that $(T,s^T)\in\f^+$. Moreover, $\phi_1\colon (T,s^T)\to (T_1,s^{T_1})$ 
such that $\phi_1\restriction T_1=\text{Id}_{T_1}$ and $\phi_1\restriction T_2$ is mapped onto the root,
and  $\phi_2\colon (T,s^T)\to (T_2,s^{T_2})$
such that $\phi_2\restriction T_2=\text{Id}_{T_2}$ and $\phi_2\restriction T_1$ is mapped onto the root, are epimorphisms.

\end{proof}

\subsection{Simplicity of $\mathbf{H(L)}$}

A group is {\em simple} if it has no non-trivial proper normal subgroups. 
In this subsection, we show that the homeomorphism group of the Lelek fan, $H(L),$ is simple. Anderson \cite{A}  gave a criterion for  groups of homeomorphisms that implies the simplicity. Anderson's criterion is satisfied for instance by the homeomorphism group of the Cantor set, 
the homeomorphism group of the universal curve, or by the  group of all
 orientation-preserving homeomorphisms of $S^2.$ As we will see, a modification of that criterion applies to $H(L)$. 

There are various recent results concerning simplicity of topological groups, Tent-Ziegler \cite{TZ} showed that the isometry group of the bounded  Urysohn metric space  is simple, 
Macpherson-Tent \cite{MT} 
gave a general result on simplicity of automorphism groups of  countable ultrahomogeneous structures whose
 classes of finite substructures  have the free
amalgamation property.  This last result was later generalized by Tent-Ziegler \cite{TZ2}, who also showed 
that the isometry group of the Urysohn space modulo
the normal subgroup of bounded isometries is a simple group.

Recall from Subsection \ref{notation} that $E$ denotes the set of endpoints of $L$, 
$C$ is the Cantor set,  $F$ is the Cantor fan, and
 $\pi_1\colon F\setminus\{v\}\to C$,  $\pi_2\colon F\to [0,1]$ are projections. 
Let $v$ denote the top of $L$.
 Define 
 \[
 \K=\{K \subseteq L\colon \text{ both } K  \text{ and } (L\setminus K )\cup\{v\} \text{ are closed and different from } L \}.
 \]
 
 The properties listed below follow immediately from the definition of $\K $.
 \begin{remark}\label{rem}\label{easy}
 \begin{itemize}
 \item[(1)] Let $K \in \K $. Then for any $e\in E$, we have either $[v,e]\subseteq K $ or $[v,e]\subseteq (L\setminus K )\cup\{v\}$. 
  \item[(2)] Whenever $K \in \K $ and $g\in H(L)$, then  $g(K )\in \K $.
    \item[(3)] If $K \in \K $, then $K \setminus\{ v\}$ is an open non-empty set in $L$. Moreover $\{K \setminus\{ v\},
 L\setminus K \}$ is a clopen decomposition of $L\setminus\{ v\}$.
\item[(4)] If $K \in \K $, then $(L\setminus K )\cup\{v\}\in \K $. If $K ,K '\in \K $ are such that $K \cup K '\neq L$, then $K \cup K '\in \K $.  
  If $K ,K '\in \K $ are such that $K \cap K '\neq \{v\}$, then $K \cap K '\in \K $. 
     \item[(5)] If $X\subseteq C$ is a  clopen set such that $\pi_1^{-1}(X)\cap L$ and $\pi_1^{-1}(C\setminus X)\cap L$ are
     non-empty, then $(\pi_1^{-1}(X)\cap L)\cup \{ v\}\in \K $.
  \end{itemize}
  \end{remark}
 
 Let $G^0$ denote the subgroup of $H(L)$ consisting of those $g\in H(L)$ that are the identity when restricted to some $K \in \K .$ We say that $g\in G^0$ is {\em supported} on $K \in \K $ if $g\restriction (L\setminus K )$ is the identity. For $K \in \K $ let $E(K )$ denote the set of endpoints of $K$.
 Observe that by Remark \ref{rem} part (1), $E\cap K=E(K)$.
 
 \begin{lemma}\label{lem1}
 The family $\K $ satisfies the following properties:
 \begin{itemize}
 \item[(1)] each $K\in \K $ is homeomorphic to $L$,
 \item[(2)] for every $h\neq{\rm Id}\in H(L)$  there is $K\in \K $ such that \\ $K\cap (h(K)\cup h^{-1}(K))=\{v\}$.
 \end{itemize}
 \end{lemma}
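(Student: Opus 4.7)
For part (1), the plan is to show that each $K\in\K$ is a non-degenerate subcontinuum of the Cantor fan with a dense set of endpoints and then invoke the uniqueness theorem of Bula-Oversteegen/Charatonik. First I would verify that $v\in K$ for every $K\in\K$: if $v\notin K$ then $L\setminus K$ would be closed as well as open (as the complement of a closed set containing $\{v\}$), so $K$ would be clopen in the connected continuum $L$ and hence empty or all of $L$, both excluded. Using Remark \ref{rem}(1), for each endpoint $e\in E$ the segment $[v,e]$ is either contained in $K$ or meets $K$ only at $v$; combined with the observation that every point of $L\setminus\{v\}$ lies on the segment $[v,(\pi_1(x),t_{\pi_1(x)})]$ to some endpoint of $L$, this yields the description $K=\{v\}\cup\bigcup_{e\in K\cap E}[v,e]$. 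Thus $K$ is arc-connected through $v$ and therefore a non-degenerate subcontinuum of $L\subseteq F$. For density of endpoints in $K$: each $e\in K\cap E$ is also an endpoint of $K$ because any arc in $K$ is an arc in $L$; given $x\in K$ with $x\neq v$, the set $K\setminus\{v\}$ is open in $L$ (this is the content of $(L\setminus K)\cup\{v\}$ being closed), so a neighborhood of $x$ lies in $K$ and contains $L$-endpoints by density of $E$, while the case $x=v$ is handled by choosing a point of $K\setminus\{v\}$ and running the previous argument along points of its segment close to $v$. The Bula-Oversteegen/Charatonik uniqueness theorem then gives $K\cong L$.

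For part (2), given $h\neq\operatorname{Id}$, the plan is to pick an endpoint whose $h^{\pm1}$-orbit separates into distinct segments and then use a small clopen $\pi_1$-sector as $K$. Since $E$ is dense in $L$ and $h\neq\operatorname{Id}$, there exists $e\in E$ with $h(e)\neq e$; because any homeomorphism of $L$ fixes the top $v$ (the unique ramification point) and maps segments from $v$ to segments from $v$, the three endpoints $e,h(e),h^{-1}(e)$ lie on pairwise distinct segments, so $\pi_1(e),\pi_1(h(e)),\pi_1(h^{-1}(e))$ are pairwise distinct in $C$. Using zero-dimensionality of $C$, I would choose a clopen $X\subseteq C$ with $\pi_1(e)\in X$ and $\pi_1(h(e)),\pi_1(h^{-1}(e))\notin X$ and set $K=(\pi_1^{-1}(X)\cap L)\cup\{v\}$; by Remark \ref{rem}(5) this $K$ lies in $\K$.

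Verifying that $K\cap h(K)=K\cap h^{-1}(K)=\{v\}$ for $X$ taken small enough is the delicate part. I would argue by contradiction: if no clopen $X$ works, then along a shrinking sequence $X_n\downarrow\{\pi_1(e)\}$ there exist witnesses $y_n\in K_n\cap h(K_n)\setminus\{v\}$ with $z_n:=h^{-1}(y_n)\in K_n$. Passing to convergent subsequences $y_n\to y^*$, $z_n\to z^*$ by compactness of $L$, if $y^*\neq v$ then continuity of $\pi_1$ on $L\setminus\{v\}$ combined with $\pi_1(y_n)\in X_n$ forces $\pi_1(y^*)=\pi_1(e)$, so $y^*\in S_e$; the same for $z^*$; and then $y^*=h(z^*)\in h(S_e)=S_{h(e)}$ yields $y^*\in S_e\cap S_{h(e)}=\{v\}$, a contradiction. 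The main obstacle I anticipate is the residual ``escape'' case $y_n,z_n\to v$, in which the naive projection argument breaks down at the top. I would handle it by exploiting the continuity of $h$ at $v$ together with the fact that $\pi_1(h(e))\neq\pi_1(e)$ constrains the induced map on $C_E$ to move $\pi_1$-sectors off themselves in a quantitative way for sufficiently small $X$, ruling out such escaping witnesses and completing the argument.
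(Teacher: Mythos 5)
Your part (1) is essentially the paper's argument: both reduce to showing that the endpoints of $K$ are dense in $K$ and then invoke the uniqueness of the Lelek fan, using that $K\setminus\{v\}$ is open in $L$ so that endpoints of $L$ accumulating at a point of $K\setminus\{v\}$ must eventually lie in $K$; your added verifications that $v\in K$ and that $K$ is a non-degenerate subcontinuum are correct and harmless extra detail.

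Part (2) has a genuine gap, and it is exactly the ``escape case'' you flag. Your plan needs: for a sufficiently small clopen $X\ni\pi_1(e)$, the sector $K=(\pi_1^{-1}(X)\cap L)\cup\{v\}$ satisfies $K\cap h(K)=\{v\}$. In terms of the induced bijection $\hat h$ on the set of directions $D=\pi_1(L\setminus\{v\})$ (given by $\pi_1(h(x))=\hat h(\pi_1(x))$), this asserts that $\hat h$ moves an entire clopen neighbourhood of $\pi_1(e)$ off itself. But $\hat h$ is not continuous: directions $c_n\to\pi_1(e)$ whose maximal segments have heights tending to $0$ correspond to points converging to $v$, and continuity of $h$ at $v$ gives no control over where their directions are sent. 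Your compactness argument correctly disposes of every case except $y_n,z_n\to v$; in that residual case one only learns that the relevant spikes have heights tending to $0$, and no contradiction follows. Nothing you cite excludes a homeomorphism that moves $e$ while, say, fixing a sequence of ever-shorter spikes at directions accumulating at $\pi_1(e)$; the appeal to continuity of $h$ at $v$ constraining the direction map ``quantitatively'' is precisely the unproved step. (A minor slip: $h(e)$ and $h^{-1}(e)$ need not lie on distinct segments, e.g.\ if $h$ swaps two directions, though this does not affect your choice of $X$.) The paper sidesteps all of this with an algebraic trick: choose $M_1,M_2,M_3\in\K$ with $e\in M_1$, $h(e)\in M_2$, $h^{-1}(e)\in M_3$ and $M_1\cap M_2=M_1\cap M_3=\{v\}$, and set $K=h^{-1}(M_2)\cap M_1\cap h(M_3)$. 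Then $e\in K$, so $K\in\K$ by Remark \ref{rem} parts (2) and (4), and $K\subseteq M_1$, $h(K)\subseteq M_2$, $h^{-1}(K)\subseteq M_3$ give $K\cap(h(K)\cup h^{-1}(K))=\{v\}$ with no limiting argument. You should replace the shrinking-sector argument with this construction (or supply an actual proof of the uniform displacement claim, which I do not believe holds in general).
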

 \begin{proof}
(1) Let $K\in \K $. To show that $K$ is homeomorphic to $L$, it is enough to show that $E(K)$ is dense in $K$. Let $x\in K\setminus\{ v\}$.   There is a sequence $(e_i)$ of endpoints of $L$ that converges to $x$. By passing to a subsequence, we can assume that either every $e_i$ is in $K$, or every $e_i$ is in $L\setminus K$. 
 Since  $(L\setminus K)\cup \{v\}$ is closed and $x\neq v$, the latter possibility cannot be true. 
 Therefore, since 
$E\cap K=E(K)$, the sequence $(e_i)$ is a sequence of endpoints of $K$ and it converges to $x$.
The above argument shows that $E(K)$ is dense in $K\setminus\{ v\}$. However, since
$\overline{K\setminus\{ v\}}=K$, $E(K)$ is also dense in $K$.

(2) Since $E$ is dense in $L$,  there is $e\in E$  such that $h(e)\neq e$.
Consequently,
 $h([v,e])\cap [v,e]=\{ v\}$ 
and $h^{-1}([v,e])\cap [v,e]=\{ v\}$. 
Let $M_1,M_2,M_3\in \K $ be such that $M_1\cap M_2=\{v\}$, $M_1\cap M_3=\{v\}$, $e\in M_1$,  $h(e)\in M_2$, and $h^{-1}(e)\in M_3$.
 Let $K=h^{-1}(M_2)\cap M_1\cap h(M_3)$. Then $K\in \K $, $K\subseteq M_1$, $h(K)\subseteq M_2$, and  $h^{-1}(K)\subseteq M_3$.
Therefore $K\cap (h(K)\cup h^{-1}(K))=\{v\}$.

 \end{proof}
 
 For $K\in \K ,$ define the {\em  height} of $K$ to be ${\rm max}(\pi_2(K)).$
 We say that a sequence $(K_i)_{i\in\mathbb{Z}}$ of elements of $\K $ is a {\em $\beta$-sequence if (1) $\bigcup_{i\in\mathbb{Z}}K_i \in \K $ and $K_i\cap K_j=\{v\}$ for $i\neq j$, and (2) $\lim_{i\to \infty} {\rm ht}(K_i)=0=\lim_{i\to -\infty}{\rm ht}(K_i).$

 \begin{lemma}\label{lem2}
 For every $K\in \K $ there exist a $\beta$-sequence $(K_i)$ with $\bigcup K_i=K$ and $\rho_1,\rho_2\in G^0$ supported on $K$ such that
\begin{itemize}
\item[(1)] $\rho_1(K_i)=K_{i+1}$ for each $i$;
\item[(2)] $\rho_2\restriction K_0=\rho_1\restriction K_0,$ $\rho_2\restriction K_{2i}=\rho_1^{-2}\restriction K_{2i}$ for $i>0,$ and $\rho_2\restriction K_{2i-1}=\rho_1^2\restriction K_{2i-1}$ for $i>0;$
\item[(3)] if  $\phi_i\in G^0$ is supported on $K_i,$ for each $i$, then there exists $\phi\in G^0$ supported on $K$ such that $\phi\restriction K_i=\phi_i \restriction K_i$ for every $i.$
\end{itemize}
 \end{lemma}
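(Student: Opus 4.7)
The plan is to reduce to a convenient realization of $K$, construct the $K_i$, $\rho_1$, $\rho_2$ there, and verify property (3) by pasting. By Lemma~\ref{lem1}(1), $K$ is homeomorphic to the Lelek fan, and any homeomorphism of $K$ fixing $v$ extends by the identity on $L\setminus K$ to a homeomorphism of $L$ lying in $G^0$; indeed, since $K$ and $\overline{L\setminus K}$ are closed with intersection contained in $\{v\}$, the pasting lemma applies. Thus it suffices to produce the $K_i$'s and the $\rho_i$'s as self-homeomorphisms of $K$ fixing $v$.

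To obtain the $\beta$-sequence, fix any sequence of positive reals $h_i$, $i\in\mathbb{Z}$, with $h_i\to 0$ as $|i|\to\infty$, and take countably many Lelek fans $M_i$ of height $h_i$, all glued at a single top point $v$ to form a space $M$. I would argue that $M$ is itself a Lelek fan: it is a continuum with unique ramification point $v$, its endpoint set $\bigcup_i E(M_i)$ is dense, and smoothness at $v$ holds because for large $|i|$ every arc in $M_i$ has diameter at most $h_i\to 0$ and therefore converges to $\{v\}$ in the Hausdorff metric, while arcs staying in a single $M_i$ converge by smoothness of $M_i$. By uniqueness of the Lelek fan, $M\cong K$, and transporting the decomposition along this homeomorphism gives $K=\bigcup_{i\in\mathbb{Z}}K_i$ with $K_i\in\K$ (by Remark~\ref{rem}(5)), pairwise meeting only at $v$, and ${\rm ht}(K_i)\to 0$, which is the required $\beta$-sequence.

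For $\rho_1$, use that each $K_i$ is a Lelek fan (Lemma~\ref{lem1}(1)) to pick homeomorphisms $f_i\colon K_i\to K_{i+1}$ fixing $v$, and set $\rho_1\restriction K_i = f_i$ and $\rho_1={\rm Id}$ on $L\setminus K$. The only nontrivial point is continuity at $v$: for $x_n\to v$ with $x_n\in K_{i_n}$, if $(i_n)$ is eventually constant then we use continuity of some fixed $f_i$ at $v$; otherwise $|i_n|\to\infty$ and $\rho_1(x_n)\in K_{i_n+1}$ has diameter at most ${\rm ht}(K_{i_n+1})\to 0$, forcing $\rho_1(x_n)\to v$. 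For $\rho_2$, define it on non-negative indices by the formulas in~(2), which induce the orbit structure $\cdots\to K_6\to K_4\to K_2\to K_0\to K_1\to K_3\to K_5\to\cdots$ and hence a bijection of $\bigcup_{i\geq 0}K_i$ onto itself; set $\rho_2={\rm Id}$ on $\bigcup_{i<0}K_i$ and on $L\setminus K$. Bijectivity is clear from the orbit analysis, and continuity at $v$ follows by the same height-shrinking argument, since $\rho_1^{\pm 2}$ shifts the index by at most $2$.

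Property~(3) is then essentially immediate. Given $\phi_i\in G^0$ supported on $K_i$, define $\phi(v)=v$, $\phi\restriction(K_i\setminus\{v\})=\phi_i\restriction(K_i\setminus\{v\})$, and $\phi={\rm Id}$ on $L\setminus K$. Each $\phi_i$ fixes $v$ and maps $K_i$ bijectively to itself, so $\phi$ is a bijection of $L$; continuity at $v$ follows because $\phi(x_n)\in K_{i_n}$ has distance from $v$ at most ${\rm ht}(K_{i_n})\to 0$. The main technical hurdle in the whole argument is the verification that the ``countable wedge'' of Lelek fans of shrinking heights is itself a Lelek fan, and in particular that smoothness at the common top holds; once this is in hand, all three conclusions follow uniformly from the height-shrinking continuity mechanism.
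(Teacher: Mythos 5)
Your proof is essentially correct, but your construction of the $\beta$-sequence takes a genuinely different route from the paper's. The paper builds the pieces directly inside $K$ by an explicit inductive ``peeling'' argument: at each stage it picks an endpoint $e$ of the remaining part with $\pi_2(e)$ small, cuts off a set $M=(\pi_1^{-1}(X)\cap L)\cup\{v\}$ for a clopen $X\subseteq C$, and keeps track of both the heights and the $\pi_1$-diameters of the leftovers so that $\bigcup_i K_i'$ exhausts all of $K$. You instead build an abstract model --- a countable wedge of Lelek fans of shrinking heights --- verify it is itself a Lelek fan, invoke the Bula--Oversteegen/Charatonik uniqueness theorem to identify it with $K$, and transport the decomposition. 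This is a legitimate and arguably more conceptual alternative; what the paper's approach buys is that all estimates are made in the ambient coordinates of $F\supseteq L$ from the start, so no transport step is needed. Two points in your version deserve explicit justification. First, after transporting along a homeomorphism $g\colon M\to K$, the quantity ${\rm ht}(K_i)=\max\pi_2(g(M_i))$ is measured in the ambient Cantor fan containing $L$, and $g$ has no reason to respect $\pi_2$; you should note that ${\rm ht}(A)\to 0$ is equivalent to Hausdorff convergence $A\to\{v\}$ (since the sets $\pi_2^{-1}([0,\epsilon))$ form a neighbourhood basis at $v$) and then use uniform continuity of $g$. Second, in your continuity check for $\phi$ at $v$ in part (3) you only invoke the height-shrinking mechanism; as in your own argument for $\rho_1$, you must also treat the case where the sequence $x_n\to v$ stays in boundedly many $K_i$, where one needs the continuity of the finitely many relevant $\phi_i$ at $v$ rather than the decay of the heights. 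Both are easy repairs, and the remainder of your argument (the pasting lemma for extensions by the identity, the orbit analysis for $\rho_2$, and the gluing in (3)) matches the paper's proof.
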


 \begin{proof}
 Given $K\in \K $, we first inductively construct a sequence $(K'_i)_{i\in\mathbb{N}}$ of elements of $\K $  such that $\bigcup_{i\in\mathbb{N}}K'_i =K$,  $K'_i\cap K'_j=\{v\}$ for $i\neq j$, and
  $\lim_{i\to \infty} {\rm ht}(K'_i)=0$.
Fix a compatible metric on the Cantor set $C$ such that ${\rm diam}(C)\leq 1$.

To construct $K'_0$, pick $e\in E(K)$ such that $\pi_2(e)<2^{-1}$.
Let $X\subseteq C$ be a clopen such that $\pi_1(e)\in X$ and 
${\rm ht}(M)<2^{-1}$, where $M=(\pi_1^{-1}(X)\cap L)\cup\{ v\}$.  Let $K'_0=(K\setminus M)\cup\{v\}$.
Then $K'_0\in \K $ and $K\setminus K'_0\neq\emptyset$ since $e\in K\setminus K'_0$.
Note that  ${\rm ht}((K\setminus K'_0)\cup\{v\})= {\rm ht}(M)<2^{-1}$.

  Suppose that  we constructed 
  $K'_0, K'_1,\ldots, K'_n$ such that  (a) for every $i\neq j$, $i,j\leq n$, $K'_i\cap K'_j=\{v\}$ and
  $K\setminus \bigcup_{j\leq i} K'_j\neq\emptyset$,
  (b) for every $i\leq n$, ${\rm ht}(K'_i)<2^{-i}$ and  
  ${\rm ht}((K\setminus\bigcup_{j\leq i} K'_j)\cup\{v\})<2^{-(i+1)}$,  and 
  (c)   for every $i\leq n$, 
${\rm diam}(\pi_1(K\setminus \bigcup_{j\leq i} K'_j))<2^{-(i-1)}$.
 
  Now we construct $K'_{n+1}$ such that conditions (a), (b), and (c), with $n$ replaced by $n+1$, 
  are fulfilled: Using that $(K\setminus \bigcup_{j\leq n} K'_j)\cup\{ v\}\in \K $ and consequently
  $K\setminus \bigcup_{j\leq n} K'_j$ is open in $L$,
  pick $e\in E(K\setminus \bigcup_{j\leq n} K'_j)$ such that $\pi_2(e)<2^{-(n+2)}$.
  Further let $X\subseteq C$ be a clopen such that $\pi_1(e)\in X$ and 
${\rm ht}(M)<2^{-(n+2)}$, where $M=(\pi_1^{-1}(X)\cap L)\cup\{ v\}$.  
By shrinking $M$ if necessary, we can assume that $(M\cap K)\cup \bigcup_{j\leq n} K'_j\neq K$ and 
${\rm diam}(\pi_1(M\setminus\{v\}))<2^{-n}.$
Let $K'_{n+1}=(K\setminus (\bigcup_{j\leq n} K'_j\cup M))\cup\{v\}$.
Then $K'_{n+1}\in \K $ is as required. In particular,  $K\setminus \bigcup_{j\leq n+1} K'_j\neq\emptyset$,
${\rm ht}((K\setminus\bigcup_{j\leq n+1} K'_j)\cup\{v\})= {\rm ht}(M)<2^{-(n+2)}$ and 
${\rm diam}(\pi_1(K\setminus \bigcup_{j\leq n+1} K'_j))\leq {\rm diam}(\pi_1(M\setminus\{v\}))<2^{-n}.$

The sequence $(K_i)_{i\in\mathbb{Z}}$ such that $K_0=K'_0$, $K_{-i}=K'_{2i}$  for $i=1,2,\ldots$,
and $K_i=K'_{2i-1}$  for $i=1,2,\ldots$, is a $\beta$-sequence satisfying
  $\bigcup_{i\in\mathbb{Z}}K_i =K$. 
  
  We first show that (3) holds. Let $\phi_i$ be as in the assumptions. Let $\phi$ be such that $\phi\restriction K_i=\phi_i\restriction K_i$ and let $\phi$ be equal to the identity
  outside $K$. We want to show that $\phi$ is a homeomorphism. Clearly $\phi$ is a bijection. Since $L$ is compact, it is enough to show that $\phi$ is continuous.
  Let $x\in L$. We show that $\phi$ is continuous at $x$. If $x\neq v$, then $x\in K_i\setminus\{v\}$ for some $i$, or $x\in L\setminus K.$ 
  Since each of $ K_i\setminus\{v\}$ and $L\setminus K$ is open,
  whenever $(x_n)$
   converges to $x$, then eventually $x_n\in K_i\setminus\{v\}$ for some $i$ or $x_n\in L\setminus K$, respectively. Therefore, eventually 
    $\phi(x_n)\in K_i\setminus\{v\}$ for some $i$, or $\phi(x_n)\in L\setminus K$, respectively. Since each $\phi_i$ is continuous, $\phi(x_n)$ converges to $\phi(x)$.
    Now let $x=v$ and let $(x_n)$ converge to $v$.
We show that $\phi(x_n)$ converges to $v=\phi(v)$.   
     Fix an open neighbourhood $U$ of $v$. Since ${\rm ht}(K_i)\to 0$ both for $i\to \infty$ and for $i\to -\infty,$ we can find $i_0>0$ such that when $i>i_0$ or $i<-i_0$, then $K_i\subseteq U$.
By continuity of $\phi_i,$ find $n_0$  such that whenever $n>n_0$ and $x_n$ is in one of $K_i$, $-i_0\leq i\leq i_0$, or in $L\setminus K$, then   $\phi(x_n)=\phi_i(x_n)\in U$, or $\phi(x_n)=x_n\in U$ respectively.
Then  since for each $i$, $\phi_i(K_i)\subseteq K_i$, whenever $n>n_0$, we have $\phi(x_n)\in U$. This shows the continuity of $\phi$ at $v$. 

To show (1) we let $\rho^i_1\colon K_i\to K_{i+1}$ to be any homeomorphism, which exists as all $K_i$'s  are homeomorphic to the Lelek fan.
Let $\rho_1$ be such that $\rho_1\restriction K_i=\rho^i_1$, $i\in\mathbb{Z}$, and let $\rho_1$ be the identity outside $K$. Then  similarly as in the proof of (3), we show that $\rho_1$ is a 
homeomorphism of $L$.

Having defined $\rho_1$, we set $\rho_2$ on each $K_i$, $i\geq 0$, as in (2),
and we let $\rho_2$ to be the identity otherwise.  Then again  similarly as in the proof of (3), we show that $\rho_2$ is a 
homeomorphism of $L$.


 \end{proof}

\begin{remark}
Anderson \cite{A} showed that whenever $G$ is a group of homeomorphisms of a space $X$, and there exists a family $\K $  of closed sets that satisfies
conditions similar to those given in Remark \ref{easy} and in Lemmas \ref{lem1} and \ref{lem2}, then $G$ is a simple group.
He assumes that sets $(K_i)$ in the definition of a $\beta$-sequence are disjoint.
Moreover,  he assumes that for every open non-empty set $U\subseteq X$
there exists $K\in \K $ such that $K\subseteq U$, which is false in our situation. Nevertheless, it is enough to substitute it
by the condition  (2) of Lemma \ref{lem1}.

\end{remark}

\begin{theorem}\label{simple}
The group of all homeomorphisms of the Lelek fan, $H(L),$ is simple.
\end{theorem}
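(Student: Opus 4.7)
We follow Anderson's simplicity criterion, as indicated in the Remark preceding the theorem. Let $N$ be a non-trivial normal subgroup of $H(L)$; we aim to show $N=H(L)$. Pick $h\in N$ with $h\neq\text{Id}$, and by Lemma \ref{lem1}(2) choose $K\in\K$ with $K\cap(h(K)\cup h^{-1}(K))=\{v\}$. For any $\phi\in G^0$ supported on $K$, the conjugate $h\phi^{-1}h^{-1}$ is supported on $h(K)$, which meets $K$ only at $v$; hence $[\phi,h]=\phi\cdot(h\phi^{-1}h^{-1})\in N$ decomposes as a disjoint-support product. More generally, for $\phi_1,\phi_2,\phi_3\in G^0$ supported on $K$, the product $[\phi_1,h][\phi_2,h][\phi_3,h]\in N$ acts as $\phi_1\phi_2\phi_3$ on $K$ and as $h(\phi_3\phi_2\phi_1)^{-1}h^{-1}$ on $h(K)$; choosing $\phi_2=\phi_3^{-1}\phi_1^{-1}$ makes the $h(K)$-part vanish and places $\phi_1\phi_3^{-1}\phi_1^{-1}\phi_3=[\phi_1,\phi_3^{-1}]$ in $N$.

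Next, I would apply Lemma \ref{lem2} to $K$ to obtain a $\beta$-sequence $(K_i)_{i\in\mathbb{Z}}$ with shifts $\rho_1,\rho_2\in G^0$ supported on $K$. For $\phi\in G^0$ supported on the single piece $K_0$, Lemma \ref{lem2}(3) assembles $f\in G^0$ supported on $K$ with $f|K_i=\rho_1^i\phi\rho_1^{-i}$ for $i\geq 0$ and $f|K_i=\text{Id}$ for $i<0$; a direct check verifies $\phi=f\rho_1f^{-1}\rho_1^{-1}$. Since $f\rho_1 f^{-1}\rho_1^{-1}$ lies in $N$ as soon as $\rho_1\in N$ (conjugates of elements in $N$ remain in $N$), it suffices to show $\rho_1\in N$. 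By the previous paragraph, this reduces to exhibiting $\phi_1,\phi_3\in G^0$ supported on $K$ with $\rho_1=[\phi_1,\phi_3^{-1}]$; the special $\rho_1^{\pm 2}$-action of $\rho_2$ on even and odd pieces $K_i$ in Lemma \ref{lem2}(2) is included precisely so that such a single-commutator presentation of $\rho_1$ in terms of $\rho_1,\rho_2$ and their conjugates can be written down explicitly.

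Iterating over pieces of the $\beta$-sequence (conjugating by powers of $\rho_1$, now in $N$) shows that every $\phi\in G^0$ supported on $K$ lies in $N$; Lemma \ref{lem1}(1) together with a conjugation argument using Corollary \ref{dense} and Theorem \ref{uph} extends this to every $\phi\in G^0$ with support in any $K'\in\K$, i.e., $G^0\subseteq N$. To finish, split the Cantor set $C$ into disjoint nonempty clopens $C_1,C_2$, form $K_j=(\pi_1^{-1}(C_j)\cap L)\cup\{v\}\in\K$ via Remark \ref{easy}(5), so that $K_1\cup K_2=L$ and $K_1\cap K_2=\{v\}$; a fragmentation argument based on Corollary \ref{epsgen_c} writes any $\phi\in H(L)$ as a finite product of $\epsilon$-homeomorphisms, each of which (for small $\epsilon$) factors into pieces supported on $K_1$ or $K_2$, hence in $G^0\subseteq N$. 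Therefore $N=H(L)$.

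The main obstacle is producing the single-commutator presentation $\rho_1=[\phi_1,\phi_3^{-1}]$ with $\phi_1,\phi_3\in G^0$ supported on $K$. This is the technical heart of the argument and explains why Lemma \ref{lem2}(2) specifies the auxiliary shift $\rho_2$ with such precision. In Anderson's classical setup, the hypotheses of pairwise disjoint $\beta$-pieces and of a $\K$-set inside every open set simplify this step; in our Lelek fan setting these fail, and condition (2) of Lemma \ref{lem1} combined with the tailored $\rho_2$ play their role.
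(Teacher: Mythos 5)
Your overall strategy (Anderson's method: disjoint-support commutator tricks plus a $\beta$-sequence swindle) is the same as the paper's, and your first paragraph correctly derives that $[\phi_1,\phi_3^{-1}]\in N$ for all $\phi_1,\phi_3$ supported on $K$. But there is a genuine gap exactly where you flag one: you never produce the presentation $\rho_1=[\phi_1,\phi_3^{-1}]$ with both factors supported on $K$, and nothing in Lemma \ref{lem2} supplies it. The paper in fact never proves $\rho_1\in N$. Instead (Lemma \ref{simlem}) it assembles $\phi$ from $\phi_i=\rho_1^i g_0\rho_1^{-i}$ for $i\ge 0$ (identity for $i<0$), sets $f=h^{-1}\phi^{-1}h\phi$ and $\rho=h^{-1}\rho_2 h\rho_1^{-1}$, both supported on $K\cup h^{-1}(K)$, and verifies by direct computation on each piece that $w=\rho^{-1}f^{-1}\rho f=g_0$; since $w$ is visibly a product of four conjugates of $h$ and $h^{-1}$, every $g_0\in G^0$ supported on $K_0$ lies in $N$. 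The precise $\rho_1^{\pm2}$ behaviour of $\rho_2$ on even and odd pieces is what makes that computation close up; it is not designed to exhibit $\rho_1$ as a single commutator. To salvage your route you would have to carry out the commutator presentation of $\rho_1$ (or of $\rho_1^2$, which would suffice after re-indexing the swindle) explicitly, or switch to the paper's direct computation.

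The final step also fails as written. A homeomorphism supported on $K_1$ fixes $K_2$ pointwise and maps $K_1$ onto itself, so any finite product of maps supported on $K_1$ or on $K_2$ preserves both sets; but an $\epsilon$-homeomorphism can interchange short branches of $K_1$ and $K_2$ near the top $v$, where the two sets come arbitrarily close, so it need not admit such a factorization no matter how small $\epsilon$ is. The paper's closing move is simpler and does not use Corollary \ref{epsgen_c} at all: given $g\ne{\rm Id}$, pick $K\in\K$ with $g(K)\cap K=\{v\}$ and $g(K)\cup K\ne L$, let $\alpha$ equal $g$ on $K$, $g^{-1}$ on $g(K)$, and the identity elsewhere; then $\alpha$ and $\alpha^{-1}g$ both lie in $G^0$ and $g=\alpha(\alpha^{-1}g)$. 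A smaller point: an element of $G^0$ supported on all of $K$ is not a finite product of maps supported on single pieces $K_i$, so ``iterating over pieces'' does not by itself give $G^0\subseteq N$; what you need there is the conjugation argument moving $K_0$ to an arbitrary element of $\K$, which does work since any two elements of $\K$ are carried to one another by a homeomorphism of $L$.
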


{\rm The proof of Theorem \ref{simple} will go along the lines of the proof  of simplicity of homeomorphism groups studied by Anderson. 
 We sketch it here for the reader's convenience, and for the details we refer   to \cite{A}.

We need the following lemma (analogous to Theorem I in \cite{A}). }

\begin{lemma}\label{simlem}
Let $h\neq{\rm Id}\in H(L)$. Then every $g\in G^0$ is the product of four conjugates of $h$ and $h^{-1}$ (appearing alternately).
\end{lemma}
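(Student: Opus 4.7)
The plan is to follow Anderson's proof of Theorem I in \cite{A}, adapting it to our setting where elements of $\K$ share the top $v$ rather than being pairwise disjoint. This adaptation is essentially routine because $v$ is a fixed point of every element of $H(L)$ (as the unique ramification point of $L$), so homeomorphisms supported on ``almost-disjoint'' members of $\K$ still commute.

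First I would set up the geometry. Applying Lemma~\ref{lem1}(2) to $h$, choose $K\in\K$ with $K\cap(h(K)\cup h^{-1}(K))=\{v\}$. Applying Lemma~\ref{lem2} to $K$, obtain a $\beta$-sequence $(K_i)_{i\in\mathbb{Z}}$ with $\bigcup K_i=K$ and shifts $\rho_1,\rho_2\in G^0$ supported on $K$. Note that $K_0\in\K$ (each member of a $\beta$-sequence is) and $K_0\cap h^{\pm 1}(K_0)\subseteq K\cap h^{\pm 1}(K)=\{v\}$. Given $g\in G^0$ supported on some $K_g\in\K$, I would transfer the support of $g$ to $K_0$: since $K_g$, $K_0$, $(L\setminus K_g)\cup\{v\}$, and $(L\setminus K_0)\cup\{v\}$ are all homeomorphic to $L$ (by Lemma~\ref{lem1}(1) and Remark~\ref{rem}(4)), one can piece together $f\in H(L)$ with $f(K_g)=K_0$. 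Since $g=f^{-1}(fgf^{-1})f$ and conjugation by $f^{-1}$ carries each conjugate of $h^{\pm 1}$ to another such conjugate (preserving the alternating pattern), we may assume $g$ is supported on $K_0$.

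Next I would express $g$ as a commutator. Using Lemma~\ref{lem2}(3), define $\phi\in G^0$ by $\phi|_{K_i}=\rho_1^i g\rho_1^{-i}|_{K_i}$ for $i\geq 0$, $\phi|_{K_i}=\text{Id}$ for $i<0$, and $\phi=\text{Id}$ outside $K$. This is a homeomorphism because ${\rm ht}(K_i)\to 0$, so the infinitely many pieces fit together continuously at $v$. A direct check on each $K_j$ shows $g=\phi\rho_1\phi^{-1}\rho_1^{-1}=[\phi,\rho_1]$: on $K_0$ the commutator acts as $\phi\cdot\text{Id}=g$; on $K_j$ for $j\geq 1$ the two halves $\phi|_{K_j}=\rho_1^j g\rho_1^{-j}$ and $(\rho_1\phi^{-1}\rho_1^{-1})|_{K_j}=\rho_1^j g^{-1}\rho_1^{-j}$ cancel; and elsewhere everything is the identity.

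Finally I would convert the commutator $[\phi,\rho_1]$ into a product of four alternating conjugates of $h^{\pm 1}$. Here the almost-disjointness $K\cap h^{\pm 1}(K)=\{v\}$ becomes crucial: it ensures that $\phi$ and $h\phi^{\pm 1}h^{-1}$ (supported on $K$ and $h(K)$ respectively) commute, and analogously for $\rho_1$ and $h\rho_1^{\pm 1}h^{-1}$. Following Anderson's computation, one uses the second shift $\rho_2$ from Lemma~\ref{lem2}(2), together with $h$, to construct $a_1,a_2\in H(L)$ for which
\[
g=[a_1,h]\cdot[a_2,h]=a_1ha_1^{-1}\cdot h^{-1}\cdot a_2ha_2^{-1}\cdot h^{-1},
\]
a product of four alternating conjugates of $h^{\pm 1}$. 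The main obstacle is the algebraic bookkeeping required to choose $a_1,a_2$ so that all extraneous pieces cancel exactly; the double-shift structure $\rho_1,\rho_1^2,\rho_1^{-2},\rho_1^2,\rho_1^{-2},\ldots$ built into $\rho_2$ by Lemma~\ref{lem2}(2) is precisely what enables this cancellation and keeps the number of factors at four rather than letting it grow.
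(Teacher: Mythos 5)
Your proposal is correct and follows essentially the same route as the paper: reduce to $g$ supported on $K_0$, build $\phi$ from the conjugates $\rho_1^i g\rho_1^{-i}$ via Lemma~\ref{lem2}(3), and run Anderson's commutator trick using the almost-disjointness $K\cap(h(K)\cup h^{-1}(K))=\{v\}$. The only substantive difference is that the paper makes the final step explicit, setting $f=h^{-1}\phi^{-1}h\phi$ and $\rho=h^{-1}\rho_2 h\rho_1^{-1}$ and exhibiting $w=\rho^{-1}f^{-1}\rho f=(\rho^{-1}\phi^{-1}h^{-1}\phi\rho)(\rho^{-1}h\rho)(h^{-1})(\phi^{-1}h\phi)$ as the alternating product of four conjugates equal to $g$, whereas you leave the precise choice of $a_1,a_2$ to Anderson's bookkeeping.
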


\begin{proof}
Since every two elements of $\K $ are homeomorphic via a homeomorphism of $L$, it is enough to show that there exists $K_0\in \K $ such that for any $g_0\in G^0$ supported on $K_0$, 
$g_0$  is the product of four conjugates of $h$ and $h^{-1}$.

By Lemma \ref{lem1} (2), there is
 $K\in \K $  such that $K\cap (h(K)\cup h^{-1}(K))=\{v\}$. Let $(K_i)$ be a $\beta$-sequence such that $\bigcup_i K_i=K$ and let
$\rho_1 $ and $\rho_2$ be as in (1) and (2) of Lemma \ref{lem2}. We show that $K_0$ is as required.
Let $g_0\in G^0$ be supported on $K_0.$
For $i\geq 0$, let $\phi_i=\rho_1^i g_0 \rho_1^{-i}$ and let $\phi_i$ be the identity on $K_i$, when $i<0$. 
Take $\phi$ as in (3) of Lemma \ref{lem2}.
Take $f=h^{-1} \phi^{-1} h \phi$. Note that $f$ is supported on $K\cup h^{-1}(K)$, $f\restriction K=\phi\restriction K$, 
and $f\restriction (h^{-1}(K))=(h^{-1} \phi^{-1} h)\restriction (h^{-1}(K))$.
Take $\rho=h^{-1} \rho_2 h \rho_1^{-1}$. Note that $\rho$ is supported on $K\cup h^{-1}(K)$, $\rho\restriction K=\rho_1^{-1}\restriction K$, 
and $\rho\restriction (h^{-1}(K))=(h^{-1} \rho_2 h)\restriction (h^{-1}(K))$.
Let $w=\rho^{-1} f^{-1}\rho f$. Then 
$w=(\rho^{-1}\phi^{-1} h^{-1}\phi\rho)(\rho^{-1} h\rho)(h^{-1})(\phi^{-1} h\phi)$, 
therefore  is a product of four conjugates of $h$ and $h^{-1}.$
Unraveling definitions of $\phi$, $f$, $\rho$, and $w$, as it is done in \cite{A},
we get that $w=g_0$.

\end{proof}

\begin{proof}[Proof of Theorem \ref{simple}]
Let $g\in H(L)$ and let $h\in H(L)$, $h\neq{\rm Id}$. We show that $g$ is the product of 8 conjugates of $h$ and $h^{-1}$.
This will immediately imply that $H(L)$ is simple.

Let $K\in \K $ be such that $g(K)\cap K=\{v\}$ and $g(K)\cup K\neq L$. Take $\alpha\in H(L)$ such that $\alpha\restriction K=g\restriction K$,
$\alpha\restriction g(K)=g^{-1}\restriction g(K)$,
and $\alpha $ is equal to the identity outside $g(K)\cup K$.
Notice that $\alpha,(\alpha^{-1}g)\in G^0$ and $g=\alpha(\alpha^{-1}g)$. By Lemma \ref{simlem}, $g$ is the product of 8 conjugates of $h$ and $h^{-1}$.
\end{proof}

\begin{remark}
 As in \cite{A}, one can modify the proof of Theorem \ref{simple}, to show that whenever $g\in H(L)$ and  $h\in H(L)$, $h\neq {\rm Id}$,
then $g$ is the product of 6 conjugates of $h$ and $h^{-1}$.
\end{remark}

\thanks{{\noindent \bf Acknowledgments. }{\rm  A large portion of this work was done during the trimester program on `Universality and Homogeneity' at the Hausdorff Research Institute for Mathematics in Bonn.
We would like to thank the organizers: Alexander Kechris, Katrin Tent, and Anatoly Vershik for the opportunity to participate in the program.
We also would like to thank the anonymous referee for numerous detailed suggestions that considerably helped us to improve the presentation of the paper.
}

\normalsize
\baselineskip=17pt

\end{document}